\documentclass{article}

\usepackage[T1]{fontenc}
\usepackage[utf8]{inputenc}

\usepackage{soul}
\usepackage{cancel}
\usepackage{import}
\usepackage{indentfirst}

\usepackage{graphicx}

\usepackage{subcaption}

\usepackage{amsmath,amsthm,amssymb,amsfonts,amscd, amsbsy,mathtools}
\usepackage{wasysym}

\usepackage{mathrsfs}

\usepackage{empheq}

\usepackage{tikz}
\usetikzlibrary{calc}

\usepackage{ upgreek }

\usepackage{pdfpages}

\usepackage{bussproofs}

\everymath{\displaystyle}
\DeclareMathAlphabet{\mathcal}{OMS}{cmsy}{m}{n}

\usepackage{faktor}

\usepackage[letterpaper,top=2cm,bottom=2cm,left=3cm,right=3cm,marginparwidth=1.75cm]{geometry}

\usepackage{graphicx}
\usepackage[colorlinks=true, allcolors=blue]{hyperref}

\newtheorem{remark}{Remark}[section]
\newtheorem{prop}{Proposition}[section]
\newtheorem{theorem}{Theorem}[section]
\newtheorem{lemma}{Lemma}[section]

\title{Geometry of a class of $\varphi$-minimal surfaces in $\mathbb{H}^2\times \mathbb{R}$}

\author{
  Katianny Rosa Freitas
  \thanks{Departamento de Matemática,
  Universidade de Brasília, Brazil.
  Email: \href{mailto:katiannyrosa.f@gmail.com}{katiannyrosa.f@gmail.com}}
  \and
  António Martínez
  \thanks{Departamento de Geometría y Topología, Universidad de Granada, Granada, Spain.
  Email: \href{mailto:amartine@ugr.es}{amartine@ugr.es}}
  \and
  João Paulo dos Santos%
  \thanks{Departamento de Matemática,
  Universidade de Brasília, Brazil.
  Email: \href{mailto:joaopsantos@unb.br}{joaopsantos@unb.br}}
  }

\date{\today}

\begin{document}
\maketitle

\begin{abstract}
We study a class of $\varphi$-minimal surfaces in the product space $\mathbb{H}^2\times\mathbb{R}$, that is, weighted minimal surfaces whose corresponding weight is determined by a smooth potential function $\varphi$ that depends only on the height coordinate relative to the factor $\mathbb{R}$. Under monotonicity, concavity, and asymptotic conditions on $\varphi$, we obtain a complete classification of connected, properly embedded $\varphi$-minimal surfaces that are invariant under isometries of $\mathbb{H}^2\times\mathbb{R}$ acting trivially on $\mathbb{R}$ and by rotations or parabolic translations on $\mathbb{H}^2$. Finally, we establish several non-existence results for $\varphi$-minimal surfaces in $\mathbb{H}^2 \times \mathbb{R}$, either by using properties of the height function or by applying the tangency principle together with suitable invariant surfaces as barriers. \\ \\
\end{abstract}

\noindent
\begin{it}2020 Mathematics Subject Classification.\end{it} 53A10; 53C42.

\medskip

\noindent
\begin{it}Keywords.\end{it} weighted minimal surfaces, product spaces, invariant surfaces, rotations, parabolic translations.

\section{Introduction}

The study of surfaces with prescribed mean curvature is a classical topic in differential geometry of surfaces in three-dimensional Riemannian manifolds. Important and extensively explored examples within this topic are given by minimal surfaces, which are critical points of the area functional, and surfaces with non-zero constant mean curvature, which are, in turn, critical points of the area functional with fixed volume. A particularly elegant class are given by the weighted minimal surfaces, that is, surfaces that arise as critical points of a weighted area functional whose weight is determined by a potential function $\varphi$. In this case, these surfaces satisfy an Euler-Lagrange equation given by
\begin{equation}
\mathbf{H} = (\nabla \varphi)^\perp, \label{eq:main} 
\end{equation}
where $\mathbf{H}$ is the mean curvature vector and $(\nabla \varphi)^\perp$ denotes the normal part of $\nabla \varphi$. This class generalizes several important families of surfaces with prescribed mean curvature: classical minimal surfaces (when $\varphi$ is constant), translating solitons for the mean curvature flow in the Euclidean space $\mathbb{R}^3$ (when $\varphi(z)=z$), and singular minimal surfaces (when $\varphi(z)=\alpha\log z$). We refer to surfaces satisfying \eqref{eq:main} as $\varphi$-minimal surfaces. 

Translating solitons for the mean curvature flow constitute a special class of solutions to this flow, as a particular class of solitons, i.e., solutions that evolve under isometries of the ambient spaces. For the specific case of translation solitons, the evolution is given by translations in a given direction. Translating solitons have received particular attention due to their role in the singularity theory of mean curvature flow in Euclidean space \cite{hoffman2017notes}. In Euclidean space, the classification of rotationally symmetric translating solitons is given by the bowl solitons and the translating catenoids \cite{clutterbuck2007stability, martin2015topology}, and such classification has been fundamental in understanding the geometry of these objects. Singular minimal surfaces constitute another important class of prescribed mean curvature surfaces. Their geometry, with special attention to invariant surfaces under one-parameter group of isometries and construction of such surfaces, has been investigated in Euclidean space in $\mathbb{R}^3$ for instance, in \cite{lopez2018invariant, lopez2019dirichlet, dierkes2021symmetric, weierstrass-martinez-trivino}.

The classes of examples of weighted minimal surfaces described above share a particular feature on their potential functions: both depend only on the height function with respect to the vector $\vec{e}_3 = (0,0,1) \in \mathbb{R}^3.$ Weighted minimal surfaces in $\mathbb{R}^3$ where the corresponding weight is determined by a potential function that depends only on the height function with respect to a given vector field. As described in \cite{martinez2022equilibrium}, these surfaces are a case of the equilibrium problem of a flexible and inextensible surface in a vertical force field proposed by Poisson \cite{poisson1975surfaces}. Such a class of weighted minimal surfaces has been recently studied, both in relation to invariant surfaces \cite{martinez2022equilibrium}. and with respect to global aspects and behaviors as we can see, for example, in \cite{martinez2022equilibrium, mean-convex, catenary-cylinders, half-space, rosales1, rosales2} .

More recently, the theory of translating solitons to the mean curvature flow has been extended to product spaces such as $\mathbb{H}^2\times\mathbb{R}$ as we can see in \cite{bueno2018translating, de2019translating, lima-pipoli}, for construction of invariant surfaces, uniqueness results in \cite{bueno2020uniqueness}, and in \cite{tinaglia-pipoli} for asymptotic behaviour. In all of the works mentioned above, vertical translating solitons are considered, namely, solitons where the evolution is given by the vertical Killing field the generates the factor $\mathbb{R}$. In this case, such surfaces are also $\varphi-$ minimal surfaces, where $\varphi : \mathbb{H}^2 \times \mathbb{R} \rightarrow \mathbb{R}$ is given by $\varphi(p,z)=z$. As we described above, singular minimal surfaces constitute another important class of prescribed mean curvature surfaces. However, unlike the class of vertical translating solitons, this class in $\mathbb{H}^2 \times \mathbb{R}$ remains largely unexplored. 

The main goal of this paper is to develop a general classification theory for $\varphi$-minimal surfaces in $\mathbb H^2\times\mathbb R$ when the potential depends only on the height function, with focus on surfaces invariant under a one-parameter group of isometries in $\mathbb{H}^2 \times \mathbb{R}$. More precisely, we assume that $\varphi:]a,\infty[\to\mathbb R$, $a\in \mathbb{R}$, satisfies
\begin{itemize}
    \item[H1)] $\dot{\varphi}(z) > 0$ and $\dot{\varphi}(z) < 0$;
    \item[H2)]   $\lim_{s \to \infty}\dot{\varphi}(s) = 0$. 
    \item[H3)] $\lim_{z \to a^+}(z-a) \dot{\varphi}(z) = L \in ] 0, \infty].$
\end{itemize}
\begin{remark}\label{implicações hipotese 3}
    It is not difficult do prove, from condition (H3) that $$\lim_{z \to a^+}\dot{\varphi}(z) = \infty \quad \text{and} \quad \lim_{z \to a^+}\varphi(z) = -\infty.$$ 
\end{remark}
These assumptions are motivated by the singular minimal surface equation, for which $\dot{\varphi}(z)=1/z$. More information about these surfaces in Euclidean spaces appears in \cite{dierkes2021symmetric} and \cite{lopez2018invariant}. 

Regarding the transformations we will consider to generate invariant surfaces, they will be given by one-parameter groups of isometries of $\mathbb{H}^2 \times \mathbb{R}$ that act trivially on $\mathbb{R}$ and by rotations, parabolic translations, or hyperbolic translations on $\mathbb{H}^2$. Such invariance property will provide, for the corresponding to the prescribed mean curvature problem, a three-dimensional system of ordinary differential equations for the generating curves. A detailed analysis of such systems will provide us, in the rotational and parabolic settings, several families of invariant surfaces, and a complete classification within the parabolic and rotational settings. More precisely, we classify the connected, properly embedded $\varphi$-minimal surfaces in $\mathbb{H}^2\times\mathbb{R}$ that are invariant under either parabolic translations or rotations of $\mathbb{H}^2$. We show that, up to ambient isometry, such surfaces are one of the following:
\begin{itemize}
    \item a parabolic winglike surface (Theorem \ref{teo parabolic com minimo});
    \item a parabolic horizontal graph without a minimum point at height (Theorem \ref{teo sin min parabolic});
    \item a rotational bowl (Theorem \ref{teo bowl rot});
    \item a rotational winglike surface (Theorem \ref{rotatinal winglike});
    \item a rotational horizontal graph without a minimum point at height (Theorem \ref{teo sin min rot}).
\end{itemize}
Each of these families exists and provides surfaces that are properly embedded and complete. 

{ An important contribution of this work is the construction of parabolically and rotationally invariant $\varphi$-minimal surfaces in $\mathbb{H}^2\times\mathbb{R}$ whose height function has no interior minimum and which extend regularly to the boundary. These families are essential to the classification, as they account for the case in which the infimum of the height is not attained in the interior. Related rotational examples in Euclidean Space were obtained by Dierkes and Groh \cite[Theorem 7(iii)]{dierkes2021symmetric} or the singular minimal surface equation. The techniques developed in Section 7 also extend to the Euclidean rotational setting, providing analogous examples for a broader class of height-dependent potentials and recovering their examples in the logarithmic case. }

In addition to the classification results, we prove several non-existence theorems. In particular, we show that there exist no closed $\varphi$-minimal surfaces in $\mathbb{H}^2\times\mathbb{R}$, no properly immersed $\varphi$-minimal surfaces contained in a vertical cylinder, and no complete properly embedded $\varphi$-minimal surfaces with compact boundary contained in a solid horocylinder.

The paper is organized as follows. Section \ref{preliminar} characterizes $\varphi$-minimal surfaces as critical points of a weighted area functional and as minimal surfaces in an ambient space with a conformally equivalent metric. Section \ref{simetrias de h2} introduces the isometries of $\mathbb{H}^2$ and the parametrizations of symmetric surfaces. In Section \ref{geral}, we prove general results for $\varphi$-minimal symmetric surfaces. Sections \ref{parabolic section} and \ref{rotational section} are dedicated to the parabolic and rotational cases, respectively. In Section \ref{sin minimo section}, we study surfaces without a minimum height point. Finally, Section \ref{classification-section} presents the complete classification of rotationally and parabolically invariant $\varphi$-minimal surfaces for $\varphi$ satisfying (H1), (H2) and (H3), and Section \ref{non existence section} presents non-existence theorems.

\section{Preliminaries} \label{preliminar}

    Let $\varphi: {{\cal O}\subseteq}\ \mathbb{H}^2\times\mathbb{R} \to \mathbb{R}$ be a differentiable function defined on an open subset ${{\cal O}}$ of $\mathbb{H}^2\times\mathbb{R}$. We aim to investigate the surfaces in $\mathbb{H}^2\times\mathbb{R}$ whose mean curvature vector is given by
    $$ \mathbf{H} = \left( \nabla \varphi \right)^\perp. $$
    We will refer to these types of surfaces as $\varphi$-minimal surfaces or weighted minimal surfaces, since they are minimal surfaces in a conformal metric $\Tilde{g} = e^{\varphi} g$ of ${{\cal O}\subseteq}\ \mathbb{H}^2\times\mathbb{R}$ and critical points of a weighted area functional (see Theorem \ref{minima espaço conforme}).

  \begin{theorem} \label{minima espaço conforme}
      Let $\Sigma$ be a surface in $\mathbb{H}^2\times\mathbb{R}$. Then, the following statements are equivalent 
      \begin{itemize}
          \item[i)] $\Sigma$ is a $\varphi$-minimal surface;
          \item[ii)] $\Sigma$ is a minimal surface in the conformal space $\left({{\cal O}}, e^{\varphi}g \right)$, where $g$ is the product metric in $\mathbb{H}^2\times\mathbb{R}$.
          \item[iii)] $\Sigma$ is a critical point of weighted area functional
          \begin{equation}\label{area functional}
            A(\Sigma) = \int_\Sigma \text{e}^\varphi d\mu, 
          \end{equation}
          for every compactly supported variation of $\Sigma.$
      \end{itemize}
  \end{theorem}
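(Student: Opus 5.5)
The plan is to prove (ii)$\Leftrightarrow$(iii) by a direct observation, and (i)$\Leftrightarrow$(ii) by computing how the mean curvature vector changes under a conformal change of metric; (i)$\Leftrightarrow$(iii) then follows. A direct first-variation computation for (i)$\Leftrightarrow$(iii) serves as a consistency check on the normalization of $\mathbf{H}$. Throughout, $\mathbf{H}$ is understood as the trace of the second fundamental form, i.e.\ the sum of the principal curvatures. This is the convention under which \eqref{eq:main} holds with no factor $2$, and it must be fixed at the outset.

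\emph{Step 1: (ii)$\Leftrightarrow$(iii).} Set $\tilde g=e^{\varphi}g$. On the $2$-dimensional surface $\Sigma$ the induced metric is multiplied by $e^{\varphi}$, so the area element is multiplied by $(e^{\varphi})^{2/2}=e^{\varphi}$. That is, $d\tilde\mu=e^{\varphi}d\mu$, and the functional \eqref{area functional} is exactly the $\tilde g$-area of $\Sigma$. Since minimal surfaces in $(\mathcal O,\tilde g)$ are precisely the critical points of $\tilde g$-area under compactly supported variations, (ii) and (iii) are equivalent.

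\emph{Step 2: (i)$\Leftrightarrow$(ii).} Write $\tilde g=e^{2u}g$ with $u=\varphi/2$. The Levi-Civita connections satisfy
$$\tilde\nabla_XY=\nabla_XY+X(u)Y+Y(u)X-g(X,Y)\nabla u.$$
For $X,Y$ tangent to $\Sigma$, the middle two terms are tangent. Normal directions are the same for $g$ and $\tilde g$, so taking normal components gives
$$\widetilde{II}(X,Y)=II(X,Y)-g(X,Y)(\nabla u)^\perp.$$
Tracing with respect to $\tilde g$ multiplies the $g$-trace by $e^{-2u}$, and the $g$-trace of $g$ on a surface is $2$. Hence
$$\tilde{\mathbf H}=e^{-\varphi}\bigl(\mathbf H-2(\nabla u)^\perp\bigr)=e^{-\varphi}\bigl(\mathbf H-(\nabla\varphi)^\perp\bigr).$$
Since $e^{-\varphi}>0$, $\tilde{\mathbf H}=0$ if and only if $\mathbf H=(\nabla\varphi)^\perp$, which is the equivalence of (i) and (ii).

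\emph{Check for (i)$\Leftrightarrow$(iii).} Take a compactly supported variation with variational field $V$. Then
$$\frac{d}{dt}\Big|_{t=0}\int e^{\varphi}d\mu=\int_\Sigma e^{\varphi}\bigl(\langle\nabla\varphi,V\rangle+\operatorname{div}_\Sigma V\bigr)d\mu.$$
Split $V=V^T+V^\perp$ and use $\operatorname{div}_\Sigma V^\perp=-\langle\mathbf H,V\rangle$. The tangential contributions combine into $\operatorname{div}_\Sigma(e^{\varphi}V^T)$, which integrates to zero because $V$ has compact support. What remains is
$$\int_\Sigma e^{\varphi}\langle(\nabla\varphi)^\perp-\mathbf H,V\rangle\,d\mu.$$
This vanishes for all such $V$ exactly when $\mathbf H=(\nabla\varphi)^\perp$, by the fundamental lemma of the calculus of variations.

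\emph{Main obstacle.} The only delicate point is bookkeeping of conventions: the normalization of $\mathbf H$, the sign in $\operatorname{div}_\Sigma V^\perp=-\langle\mathbf H,V\rangle$, and the fact that a factor $e^{\varphi}$ on the metric, rather than $e^{2\varphi}$, is exactly what turns $2(\nabla u)^\perp$ into $(\nabla\varphi)^\perp$. I would verify these on a simple example, such as a horizontal slice $z=c$ with $\varphi=\varphi(z)$, before writing the final proof.
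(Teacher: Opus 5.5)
Your proposal is correct. It rests on the same two computations as the paper: the conformal change of mean curvature for (i)$\Leftrightarrow$(ii), and the first variation of the weighted area for (i)$\Leftrightarrow$(iii). The difference is that you derive both, whereas the paper quotes them, namely the scalar identity $\tilde H=e^{-\varphi/2}(H-g(\nabla\varphi,\eta))$ and the formula $\frac{d}{dt}\big|_{t=0}A=\int_\Sigma e^{\varphi}g((\nabla\varphi)^\perp-\mathbf H,V)\,d\mu$. Your vector identity $\tilde{\mathbf H}=e^{-\varphi}\bigl(\mathbf H-(\nabla\varphi)^\perp\bigr)$ says the same thing: pairing it via $\tilde g$ with the $\tilde g$-unit normal $e^{-\varphi/2}\eta$ returns the paper's scalar formula.

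The genuine structural difference is your Step 1, which the paper does not have. The paper never notes that $A(\Sigma)$ is literally the $\tilde g$-area of $\Sigma$, so it links (iii) to (i) rather than to (ii). Your observation $d\tilde\mu=e^{\varphi}d\mu$ makes (ii)$\Leftrightarrow$(iii) follow from the ordinary first variation of area in $(\mathcal O,\tilde g)$. It also explains why the same exponential serves as both conformal factor and weight, a match that relies on $\dim\Sigma=2$; for an $n$-dimensional submanifold one would need $\tilde g=e^{2\varphi/n}g$. The paper's direct route instead displays the weighted Euler--Lagrange expression $e^{\varphi}\bigl((\nabla\varphi)^\perp-\mathbf H\bigr)$ explicitly. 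Your consistency check reproduces exactly that computation, so your write-up contains the paper's argument and adds a conceptual shortcut on top of it.
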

  \begin{proof}
      Let $\Sigma$ be a $\varphi$-minimal surface and consider the conformal space $\left({{\cal O}}, e^\varphi g \right)$. As is  well known, the mean curvature $\tilde{H}$ of $\Sigma$ in the conformal space is given by
      $$ \tilde{H} = \text{e}^{-\frac{\varphi}{2}}\left( H - g\left( \nabla \varphi, \eta\right)\right),$$
      where $\eta$ is the normal field of $\Sigma$ in $\mathbb{H}^2\times\mathbb{R}$ (see, for example, \cite{tarcios-joao} for a computation). Hence, $\Sigma$ is a $\varphi$-minimal surface if and only if it is minimal in the conformal space. This establishes the equivalence (i) $\Leftrightarrow$ (ii).

    We now show that (i) is equivalent to (iii). For that, we follow \cite{arezzo2013conformal}. Let $\Psi: \Sigma \times (-\varepsilon, \varepsilon) \to \mathbb{H}^2\times\mathbb{R}$ be a compactly supported variation of $\Sigma$ such that $\Psi_t(\Sigma)$ is immersed in $\mathbb{H}^2\times\mathbb{R}$ for each $t$ and $\Psi_0(\Sigma) = \Sigma$. Write the variational field as $V(p)=\dfrac{\partial \Psi}{\partial t}(p,0).$ According to \cite{arezzo2013conformal}, the first variation of the weighted area functional \eqref{area functional} is
    $$
    \frac{d}{dt}\bigg|_{t=0} A (\Psi_t(\Sigma)) = \int_\Sigma e^{\varphi} g(\left( \nabla \varphi \right)^\perp - \mathbf{H}, V) \, d\mu.
    $$
    Consequently, $\Sigma$ is a critical point of the functional weighted area  $A$ if and only if $H = g(\nabla\varphi, \eta)$.
  \end{proof}

    Furthermore, according to the second item in \eqref{minima espaço conforme}, we conclude that the $\varphi$-minimal surfaces satisfy the tangency principle (see \cite{eschenburg1989maximum}).

\begin{theorem}[Tangency Principle]\label{thm:tangency-principle}
Let $M_1$ and $M_2$ be two connected $\varphi$-minimal surface in $\mathbb{H}^2 \times \mathbb{R}$ with non-empty boundaries $\partial M_1$ and $\partial M_2$, respectively. Suppose one of the following conditions holds:
\begin{itemize}
    \item There exists an interior point $p \in \operatorname{int}(M_1) \cap \operatorname{int}(M_2)$ where the unit normal vector fields coincide, i.e., $(\eta_1)_p = (\eta_2)_p$, where $\eta_i: M_i \to \mathbb{S}^2$ is the normal vector field of $M_i$; or
    \item There exists a boundary point $p \in \partial M_1 \cap \partial M_2$ where the inward unit normal vector fields coincide, i.e., $(\tilde{x}i_1)_p = (\tilde{x}i_2)_p$, where $\tilde{x}i_i$ is the inward unit normal to $\partial M_i$.
\end{itemize}
Assume that $M_1$ lies locally around p at one side of $M_2$. Then $M_1$ and $M_2$ coincide in some neighbourhood of $p$.
\end{theorem}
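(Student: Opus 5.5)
The plan is to deduce the Tangency Principle from the classical maximum principle for minimal surfaces in a Riemannian three-manifold, using the equivalence (i)$\Leftrightarrow$(ii) of Theorem \ref{minima espaço conforme}. By that theorem, $M_1$ and $M_2$ are minimal surfaces in the conformal space $(\mathcal{O}, e^{\varphi}g)$. The conformal factor is positive, so this change of metric does not affect three things: tangency of the two surfaces at $p$, the fact that $M_1$ lies locally on one side of $M_2$, and the coincidence of the inward conormals at boundary points. The normals in the new metric are $e^{-\varphi/2}\eta_i$, so equality of normals (or conormals) at $p$ is preserved. The statement then reduces to the tangency principle for minimal surfaces in an arbitrary Riemannian $3$-manifold, as in \cite{eschenburg1989maximum}. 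To keep the argument self-contained, I would sketch that reduction directly in the original metric instead.

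\textbf{Interior case.} Since $(\eta_1)_p=(\eta_2)_p$, I choose normal (or Fermi) coordinates around $p$ adapted to the common tangent plane $T_pM_1=T_pM_2$. In these coordinates both surfaces are graphs $x_3=u_i(x_1,x_2)$ over a small disc $D$ in that plane, with $u_1(0)=u_2(0)$ and $\nabla u_1(0)=\nabla u_2(0)$. The one-sided hypothesis gives, say, $u_1\ge u_2$ on $D$. The equation $H=g(\nabla\varphi,\eta)$ becomes a quasilinear elliptic equation $Q(u)=a^{ij}(x,u,\nabla u)u_{ij}+b(x,u,\nabla u)=0$. Here the term $b$ contains both the Christoffel symbols of the product metric and the contribution of $\nabla\varphi$. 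This is the key point: the right-hand side $g(\nabla\varphi,\eta)$ depends only on the position and the normal, that is, on $(x,u,\nabla u)$, and not on second derivatives. Hence the equation is still quasilinear elliptic with smooth coefficients, and the orientation conventions make $u_1$ and $u_2$ satisfy the \emph{same} equation. Subtracting and applying the mean value theorem, $w=u_1-u_2\ge 0$ satisfies a linear elliptic equation $\tilde a^{ij}w_{ij}+\tilde b^i w_i+\tilde c\,w=0$ with bounded coefficients and $w(0)=0$. The Hopf strong maximum principle then gives $w\equiv 0$ near $0$; a sign on $\tilde c$ is not needed, because the minimum value of $w$ is zero.

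\textbf{Boundary case.} Here the coinciding inward conormals ensure that both surfaces are graphs over a common half-disc $D^+$ whose boundary piece contains $0$. The function $w\ge 0$ satisfies the same linear equation, vanishes at the boundary point $0$, and the common conormal gives $\nabla w(0)=0$. If $w$ did not vanish identically near $0$, the Hopf boundary point lemma would force $\partial w/\partial\nu(0)<0$ in the outward direction, a contradiction. So $w\equiv0$ near $p$.

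\textbf{Main obstacle.} I expect the delicate step to be making sure that the normals are oriented so that $u_1$ and $u_2$ solve the identical operator $Q$. The equation $H=g(\nabla\varphi,\eta)$ is invariant under $\eta\to-\eta$, since both $H$ and the right-hand side change sign, so this is consistent. I would also need to confirm that the boundary-case hypothesis gives the required regularity up to the boundary, namely that the boundaries are smooth curves tangent at $p$. I would handle this by assuming, as is standard, that the surfaces are smooth up to the boundary, which the paper implicitly does.
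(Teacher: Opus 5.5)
Your proposal is correct and is essentially the paper's own argument: the paper gives no separate proof, and justifies the Tangency Principle exactly as in your first paragraph, via the equivalence (i)$\Leftrightarrow$(ii) of the conformal characterization in Section \ref{preliminar} (minimality in $e^{\varphi}g$) combined with Eschenburg's maximum principle \cite{eschenburg1989maximum}; your direct PDE sketch simply unpacks that principle. The one imprecision is in the boundary case: $\partial M_1$ and $\partial M_2$ generally project to different curves tangent at $0$, so there is no common half-disc, and you should instead apply the Hopf lemma on a small disc internally tangent at $0$ and contained in both graph domains, which is all the lemma requires.
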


The Tangency Principle will be useful especially in the last section where we prove existence and non-existence theorems for $\varphi$-minimal surface. 

From now on, we focus on $\varphi$-minimal surfaces for which $\varphi$ depends only on the height function of $\mathbb{H}^2\times\mathbb{R}$, i.e.,
    $$\varphi(p,z) = \varphi(z),$$
for all $(p,z)$ in the product space $\mathbb{H}^2\times I$, with $I \subset \mathbb{R}$ a real interval. In this case, the mean curvature equation becomes
    $$ H = \dot{\varphi}(z)\langle \eta, \partial_z \rangle,$$
where $\eta$ denotes the unit normal vector field on the surface.

Observe that, in this case, if the $\varphi-$minimal surface $\Sigma$ is a vertical graph of the function $u: \Omega \subset \mathbb{H}^2\to \mathbb{R}$, $\Omega$ an open subset, then  $u$ satisfies the equation
 \begin{equation} \label{grafico}
     \text{div}\left( \frac{ \nabla u }{\sqrt{1 + |\nabla u |^2}}\right) = \frac{\overset{.}{\varphi}(u)}{\sqrt{1+|\nabla u|^2}}.
 \end{equation}

Solving the PDE \eqref{grafico} for arbitrary surfaces is a problem of considerable complexity. A standard approach in the theory of surfaces in homogeneous spaces is to restrict the analysis to surfaces that are invariant under one-parameter groups of isometries of the ambient space. This is the subject of the next section.

\section{ Isometries of the Hyperbolic Plane and Symmetric Surfaces}\label{simetrias de h2}

This section introduces the main geometric objects and symmetries used throughout the classification of $\varphi-$minimal surfaces in $\mathbb{H}^2\times\mathbb{R}$. The fundamental strategy is to consider surfaces that are invariant under the action of isometry groups of $\mathbb{H}^2$, trivially extended to the product space $\mathbb{H}^2\times\mathbb{R}$. This method, widely used in the literature (see \cite{bueno2018translating}, \cite{de2020hypersurfaces}, \cite{lima-pipoli}, \cite{de2019translating}), reduces the problem of classifying surfaces in $\mathbb{H}^2\times\mathbb{R}$  to the analysis of one-dimensional generating curves.

In the upper half-space
\begin{equation*}
    \mathbb{H}^2 = \left\{ w = x + iy \in \mathbb{C}; y > 0\right\},
\end{equation*}
with the metric $\text{d}s^s = \frac{\left( \text{d}x^2 + \text{d}y^2  \right)}{y^2}$, the orientation-preserving isometries are, precisely, the Möbius transformations (see \cite{anderson2005hyperbolic}). These isometries are divided into three classes, depending on their fixed point: rotational, parabolic and hyperbolic isometries. Each of these classes fixes a family of curves in $\mathbb{H}^2$.

Parabolic isometries are Möbius transformations with exactly one fixed point on the ideal boundary $\partial_\infty\mathbb{H}^2$ and act by translations along horocycles centered at this point. Rotational isometries have one fixed point in the interior of $\mathbb{H}^2$ and another outside the model. Their orbits are geodesic circles centered at this interior point. Hyperbolic isometries have two distinct fixed points on the ideal boundary
$\partial_\infty \mathbb{H}^2$, and their orbits are equidistant curves (hypercycles) to the unique geodesic connecting these two boundary points.

\begin{figure}[ht] 
    \centering
    
    \begin{subfigure}[b]{0.31\textwidth} \centering 
    %%%%%%%% ROTACAO
    \resizebox{\linewidth}{!}{
    \begin{tikzpicture}[scale=0.9] 
        % axes 
        \draw[->] (-2,0) -- (2,0) node[right] {$x$};
        \draw[->] (0,-0.2) -- (0,3.5) node[above] {$y$};
        % boundary 
        \node[left] at (-2,0) {$\partial_\infty\mathbb H^2$}; 
        % fixed point
        \fill (0,1.4) circle (2pt); \node[right] at (0,1.4) {$p$};
        % geodesic circles
        \draw[red,thick] (0,1.4) circle (0.35); \draw[red,thick] (0,1.4) circle (0.60); \draw[red,thick] (0,1.4) circle (0.90); \draw[red,thick] (0,1.4) circle (1.25); 
        \end{tikzpicture}}
        \caption{Rotational isometries.} \label{fig:rotation} \end{subfigure} 
        \hfill 
        \begin{subfigure}[b]{0.31\textwidth}
        \centering
        %%%%%%%%%% HIPERBOLICA 
        \resizebox{\linewidth}{!}{\begin{tikzpicture} 
        % axes 
        \draw[->] (-2,0)--(2,0) node[right]{$x$}; 
        \draw[->] (0,0)--(0,3.5) node[above]{$y$}; 
        % boundary 
        \node[left] at (-2,0) {$\partial_\infty\mathbb H^2$}; 

        %fixed point
          \fill (-1.5,0) circle (2pt) node[below] {$p$};
          \fill (1.5,0) circle (2pt) node[below] {$q$};

        % geodesic
        \draw[dashed,thick] (-1.5,0) arc (180:0:1.5); 
        \node at (0.3,1.08){$\gamma$};
        
        % hypercycles 
        \pgfmathsetmacro{\A}{cosh(0.25)} \pgfmathsetmacro{\B}{sinh(0.25)}
        % upper family
        
        \draw[red,line width=0.9 pt,samples=150,smooth,domain=-13:195] plot ({1.5*\A*cos(\x)},{1.5*(\B+\A*sin(\x))}); \pgfmathsetmacro{\A}{cosh(0.5)} \pgfmathsetmacro{\B}{sinh(0.5)} 
        
        % upper family
        \draw[red,line width=0.9pt,samples=150,smooth,domain=-25:205] plot ({1.5*\A*cos(\x)},{1.5*(\B+\A*sin(\x))}); 
        \foreach \d in {0.25,0.50,0.75} { \pgfmathsetmacro{\A}{cosh(\d)} \pgfmathsetmacro{\B}{sinh(\d)} 
        % lower family
        \draw[red,line width=0.9pt,samples=150,smooth,domain=asin(\B/\A):180-asin(\B/\A)] plot ({1.5*\A*cos(\x)},{1.5*(-\B+\A*sin(\x))}); } \end{tikzpicture}}
        \caption{Hyperbolic isometries.} 
        \label{fig:hyperbolic} 
        \end{subfigure} \hfill \begin{subfigure}[b]{0.31\textwidth} \centering 
        %%%%%%%%% PARABOLIC
        \resizebox{\linewidth}{!}{\begin{tikzpicture} 
        % axes 
        \draw[->] (-2,0) -- (2,0) node[right] {$x$}; 
        \draw[->] (0,-0.2) -- (0,3.5) node[above] {$y$}; 
        \node[left] at (-2,0) {$\partial_\infty\mathbb H^2$};
        % fixed point 
        \fill (0,0) circle (2pt);
        \node[below] at (0,0) {$q$}; % horocycles
        \draw[red,thick] (0,0.35) circle (0.35); 
        \draw[red,thick] (0,0.65) circle (0.65);
        \draw[red,thick] (0,0.95) circle (0.95);
        \draw[red,thick] (0,1.30) circle (1.30);
        \end{tikzpicture}}
        \caption{Parabolic isometries.} \label{fig:parabolic} \end{subfigure} \caption{Invariant families of curves associated with one-parameter groups of isometries of $\mathbb{H}^2$.} \label{fig:isometries} 
        \end{figure}

These isometries of $\mathbb{H}^2$ extend naturally to isometries of $\mathbb{H}^2\times\mathbb{R}$, by acting trivially on the $\mathbb{R}-$coordinate
\begin{equation*}
    \widetilde{T}(p,z) = (T(p),z), \quad T \in \text{Isom}^+(\mathbb{H}^2). 
\end{equation*}

\iffalse
 Surfaces invariant under an isometry group of $\mathbb{H}^2\times\mathbb{R}$ are foliated by curves (orbits of the group). Thus, the problem of classifying two-dimensional surfaces in $\mathbb{H}^2\times\mathbb{R}$ reduces to the study of one-dimensional generating curves, described by ODEs systems. This strategy was fundamental, for example, in the classification of translating solitons in $\mathbb{H}^2\times\mathbb{R}$ by Bueno \cite{bueno2018translating}. 
 \fi

Parametrizations for surfaces invariant under rotations, hyperbolic translations, and parabolic translations in \( \mathbb{H}^n \times \mathbb{R} \) are based on the general construction of hypersurfaces presented in \cite{ManfioTojeiro2011, joeri}. In order to write down the parametrization of invariant surfaces in each case, let us consider the Lorentzian model of $\mathbb{H}^2 \times \mathbb{R}$. Let $\mathbb{L}^4$ be the Lorentzian 4-space given by $\mathbb{R}^4$ endowed with the Lorentzian metric
$$
g_{\mathbb{L}} = -dx_0^2 + dx_1^2 + dx_2^2 + dx_3^2.
$$
In this context, the Lorentzian model of $\mathbb{H}^2 \times \mathbb{R}$ is given by
\begin{equation}
    \mathbb{H}^2 \times \mathbb{R} = \left\{ (x_0,x_1,x_2,x_3) \in \mathbb{L}^4 \, , \, -x_0^2 + x_1^2 + x_2^2 = -1 \right\}. \label{eq:lorentzian-model}
\end{equation}

 An invariant surface in $\mathbb{H}^2\times\mathbb{R}$ will be generated by a profile curve
$\gamma:I\to \mathbb{H}^2\times \mathbb{R}$ given by 
\begin{equation}\label{profile curve}
  \gamma(t) = (\cosh x(t), 0, \sinh x(t), z(t)),  
\end{equation} parametrized by the arc length $t$, that is,
\begin{equation}\label{pca}
    (x'(t))^2 + (z'(t))^2 = 1,
\end{equation}
composed with an 1-parameter group of Lorentzian transformations that will provide the rotation, hyperbolic translations and parabolic translations.

\iffalse
The profile curve is determined by the functions $x(s)$ and $z(s)$. In this work, we restrict our attention to families of surfaces invariant under parabolic and rotational isometries.  For convenience, the algebraic computations are carried out using the Lorentz–Minkowski model of $\mathbb{H}^2\times\mathbb{R}$, while the Poincaré disk model is used to visualize the resulting surfaces.
\fi

\subsection{Surfaces invariant under parabolic translations}

Let $\Sigma$ be a surface in $\mathbb{H}^2\times \mathbb{R}$ invariant under parabolic translations. Thus, up to rigid motions, we can parametrize $\Sigma$ by composing the curve \eqref{profile curve} with the extensions of the one-parameter group of parabolic isometries given by
\begin{equation}
P_u=
\begin{pmatrix}
1+\dfrac{u^2}{2} & u & -\dfrac{u^2}{2} \\
u & 1 & -u \\
\dfrac{u^2}{2} & u & 1-\dfrac{u^2}{2}
\end{pmatrix}, u \in \mathbb{R}, \label{eq:parabolic}    
\end{equation}
which has $P=(1,0,1)$ as fixed point. Therefore, the parametrization is given by
\begin{equation}\label{parametrization parabolic}
    \psi(u,t) = \widetilde{P}_u (\gamma(t)) = \left(\cosh x(t) + \frac{u^2}{2}e^{-x(t)}, u e^{-x(t)},\sinh x(t) + \frac{u^2}{2}e^{-x(t)},z(t)\right).
\end{equation}
A standard computation gives a unit normal vector field for $\psi$ as 
\begin{equation*}
    \eta(u,t) = \left(\left(\sinh x(t) - \frac{e^{-x(t)}u^2}{2}\right)z'(t), -ue^{-x(t)}z'(t), \left(\cosh x(t) - \frac{e^{-x(t)}u^2, -x'(t)}{2}\right)z'(t), -x'(t)\right),
\end{equation*}
with corresponding (non-normalized) mean curvature given by
\begin{equation*}
    H(u,t) = x''(t)z'(t)-x'(t)z''(t)+z'(t).
\end{equation*}
We conclude that $\Sigma$ is a $\varphi$-minimal surface if and only if the functions $x$ and $z$ are solutions of the ordinary differential equation system
\begin{equation} 
    \left\{\begin{array}{ll}
         x '(t) & = \cos \theta(t)  \\
          z'(t)& = \sin \theta(t) \\
          \theta '(t)& = \dot{\varphi}(z(t))\cos \theta(t) + \sin \theta(t).
    \end{array}\right.
\end{equation}

\subsection{Surfaces invariant under rotations}

Now we consider $\Sigma$ as a surface in $\mathbb{H}^2\times\mathbb{R}$ invariant under rotational isometries. Then, up to rigid motions, $\Sigma$ is parametrized by the composition of $\gamma$ with the extensions of the rotations
\begin{equation}
    R_u=
\begin{pmatrix}
1 & 0 & 0 \\
0 & \cos u & \sin u \\
0 & -\sin u & \cos u
\end{pmatrix}, u \in \mathbb{R},
\end{equation}
that fixes the point $P=(1,0,0).$ Similarly to the parabolic case, the parametrization is given by
\begin{equation} \label{parametrizacao rot}
   \psi(u,t) =  \widetilde{R}_u (\gamma(t)) = (\cosh x(t) , \sinh x(t) \sin u, \sinh x(t) \cos u , z(t)). 
\end{equation}

A unit normal vector field is given by
$$\eta(u,t) =  \left( z'(t) \sinh x(t) , - z'(t)\cosh x(t) \sin u, - z'(t)\cosh x(t) \cos u , -x'(t) \right),$$
with corresponding (non-normalized) mean curvature
$$H(u,t) = x''(t)z'(t) - x'(t)z''(t) - z'(t)\coth x(t).$$

From the formulas above, we have that $\Sigma$ is a $\varphi$-minimal surface if and only if the functions $x$ and $z$ are solutions of the ordinary differential equation system
\begin{equation} \label{prob 1}
    \left\{\begin{array}{rl}
        x'(t) = & \cos \theta(t)  \\
        z'(t)  = &\sin\theta(t) \\
        \theta'(t)  = & \dot{\varphi}(z(t))\cos\theta(t) - \sin\theta(t)\coth x(t).
    \end{array}\right.
\end{equation}

\subsection{Surfaces invariant under hyperbolic translations}

Finally, we consider $\Sigma$ as a surface in $\mathbb{H}^2\times\mathbb{R}$ invariant under hyperbolic translations. Up to rigid motions, $\Sigma$ is parametrized by the composition of $\gamma$ with the extensions of the hyperbolic translations
\begin{equation}
    \widetilde{H}_u=
\begin{pmatrix}
\cosh u & \sinh u & 0 \\
\sinh u & \cosh u & 0   \\
0 & 0 & 1
\end{pmatrix}, u \in \mathbb{R},
\end{equation}
that fixes the geodesic $l(s) = (0, \cosh s, \sinh s) $. In this case, the parametrization is given by
\begin{equation} \label{parametrizacao hiperbolica}
   \psi(u, t) =  \widetilde{H}_u (\gamma(t)) = (\cosh u \cosh x(t) , \sinh u \cosh x(t) , \sinh x(t), z(t)). 
\end{equation}

Similarly to the previous cases, a unit normal vector field is given by
$$\eta(u,t) =  \left( z'(t) \sinh x(t) \cosh u ,  z'(t) \sinh x(t) \sinh u,  z'(t)\cosh x(t) , -x'(t) \right),$$
with corresponding (non-normalized) mean curvature
$$H(u,t) = x''(t)z'(t) - x'(t)z''(t) - z'(t)\tanh x(t).$$

From the formulas above, we have that $\Sigma$ is a $\varphi$-minimal surface if and only if the functions $x$ and $z$ are solutions of the ordinary differential equation system
\begin{equation} \label{prob 2}
    \left\{\begin{array}{rl}
        x'(t) = & \cos \theta (t)  \\
        z'(t)  = &\sin\theta (t) \\
        \theta'(t)  = & \dot{\varphi}(z(t))\cos\theta(t) - \sin\theta(t)\tanh x(t).
    \end{array}\right.
\end{equation}

\section{General results for \texorpdfstring{symmetric $\varphi$-minimal surfaces}{phi-minimal} } \label{geral}

Having established the parametrizations for each symmetry class, we now turn to the study of properties that are common to all \(\varphi\)-minimal symmetric surfaces where the function $\varphi$ satisfies the hypothesis (H1), (H2) and (H3). This section is dedicated to proving initial results about such $\varphi$-minimal surfaces. As in the previous section, to classify a $\varphi$-minimal symmetric surface it is enough to determine the behaviour of the functions $x$ and $z$ of the profile curve \eqref{profile curve}. These functions are solutions of
\begin{equation}\label{system 1}
  \left\{  \begin{array}{ll}
    x' & =  \cos \theta, \\
     z' & = \sin \theta, \\
     \theta' &= \dot{\varphi}(z) \cos\theta -\left(\tanh x\right)^i \sin\theta,
\end{array}\right.
\end{equation}
where we omitted the parameter $t$ to simplify the notation, and the function $(\tanh)^i$ is defined in each case as in the table \ref{tabela} below.  
\begin{table}[ht]
\centering
\begin{tabular}{|l|l|l|}\hline
    $i$ & Function & Case \\ \hline
    $-1$ & $\coth(x)$ & Rotational\\ \hline
    $0$ & $-1$ & Parabolic    \\  \hline  
    $1$ & $  \tanh(x)$  &  Hyperbolic\\\hline
\end{tabular}
\caption{symmetric cases}\label{tabela}
\end{table}
By the existence and uniqueness theorem for ODEs, given an initial condition, there exists a unique solution of (\ref{system 1}) in a maximal interval $(t_{\textnormal{min}},t_{\textnormal{max}})$, which implies that there exist $\varphi$-minimal surfaces in each of these cases. 

{The function $x$ has a natural geometric interpretation. Indeed, considering the profile curve $\gamma$, as in \eqref{profile curve},
in the Lorentzian model, the hyperbolic distance between its projection onto $\mathbb H^2$ and the point $P=(1,0,0)$ is given by
$$ d_{\mathbb H^2}\big(\pi_{\mathbb H^2}(\gamma(t)),P\big)=|x(t)|.
 $$
Thus, $|x|$ measures the hyperbolic distance from the profile curve to the reference point $P$, while the sign of $x$ provides its position in one of the half-spaces determined by the geodesic plane over the geodesic $\alpha(s)=(\cosh(s),\sinh(s),0)$, which meets the geodesic $\beta(s)=(\cosh(s), 0, \sinh(s))$ orthogonally at $P$. Describing the function $x$ is important for determining whether the surface is a vertical graph, whether it lies in a half-space of $\mathbb{H}^2\times\mathbb{R}$, or whether it approaches the boundary of $\mathbb{H}^2$ or is bounded. Moreover, the angle function in any one of these cases is given by:
$$\nu = \langle\eta, \partial_z\rangle = - \cos \theta,$$
where $\eta$ is a normal field. Thus, if $x$ is monotone, we conclude that the surface is a graph. Equally important is understanding the behaviour of the function $z$, especially regarding critical points and limits. 

\begin{remark}
    Since $x'=\cos\theta$ and $z'=\sin\theta$, the angle $\theta$ determines the monotonicity of the coordinate functions $x$ and $z$. Moreover, the system is invariant under the transformation
    $$ (x,z,\theta)\longmapsto(-x,z,\pi-\theta).$$

 This transformation reverses the sign of $x'$ while preserving the sign of $z'$, in the rotational and hyperbolic cases. 

Consequently, up to reflection with respect to $x=0$, it is sufficient to consider initial conditions satisfying

$$
z_0> a
\qquad\text{and}\qquad
\theta_0\in\left[-\frac{\pi}{2},\frac{\pi}{2}\right].
$$
\end{remark}

 We now begin the qualitative analysis of the solutions considering the case $0<\theta_0<\pi/2$, for which both coordinate functions are initially increasing, since $x'(0)=\cos\theta_0>0$ and $z'(0)=\sin\theta_0>0$. The next lemma shows that the monotonicity of $z$ persists throughout the maximal interval of existence and describes the possible behaviour of $x$ according to the sign of its initial value.

\begin{lemma} \label{crescimento de z}
    Let $x$, $z$ and $\theta$ be solutions of (\ref{system 1}), with the initial condition:
    \begin{equation*}
        x(0)= x_0, \quad z(0) = z_0, \quad \text{and}\quad \theta(0) = \theta_0,
    \end{equation*}
    where $0 \leq \theta_0 < \frac{\pi}{2}$. Then, $z$ is strictly increasing {in $[0,t_{\textnormal{max}})$}. Moreover, the following statements hold {in $[0,t_{\textnormal{max}})$}:
    \begin{itemize}
        \item  In the rotational and hyperbolic cases, if $x_0 \geq 0$ then $x' > 0$.
        \item  If $x_0 < 0$, then $x$ has at most one critical point. 
    \end{itemize}
\end{lemma}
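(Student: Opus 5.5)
The plan is a first-crossing argument for the angle $\theta$ in \eqref{system 1}. Write $T(x)$ for the coefficient $(\tanh x)^i$ from Table \ref{tabela}, so that $\theta'=\dot{\varphi}(z)\cos\theta-T(x)\sin\theta$. Since $x'=\cos\theta$ and $z'=\sin\theta$, every statement of the lemma reduces to locating $\theta(t)$ relative to the lines $\theta=0$, $\theta=\pi/2$ and $\theta=\pi$. Along the solution I will use only two facts: (H1), which gives $\dot{\varphi}>0$ on $]a,\infty[$; and the sign of $T$, namely $T>0$ for $x>0$ and $T<0$ for $x<0$ in the rotational and hyperbolic cases, and $T\equiv-1$ in the parabolic case.

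\textbf{Monotonicity of $z$.} I will show that $\theta(t)\in(0,\pi)$ for all $t\in(0,t_{\max})$, so that $z'=\sin\theta>0$ there. If $\theta_0=0$, then $\theta'(0)=\dot{\varphi}(z_0)>0$, so $\theta$ enters $(0,\pi)$ immediately; if $0<\theta_0<\pi/2$, it is already inside. Suppose $t_1>0$ is the first time at which $\theta$ leaves $(0,\pi)$.
\begin{itemize}
\item If $\theta(t_1)=0$, approaching from above forces $\theta'(t_1)\le 0$. But the equation gives $\theta'(t_1)=\dot{\varphi}(z(t_1))>0$, a contradiction.
\item If $\theta(t_1)=\pi$, approaching from below forces $\theta'(t_1)\ge0$. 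But $\theta'(t_1)=-\dot{\varphi}(z(t_1))<0$, again a contradiction.
\end{itemize}
Note that $z$ stays in $]a,\infty[$ on the maximal interval, so $\dot{\varphi}(z)$ is defined and positive there. In the rotational case the singularity $x=0$ of $\coth x$ can only occur at the end of the maximal interval, so it causes no problem. This gives strict monotonicity of $z$ on $[0,t_{\max})$.

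\textbf{The case $x_0\ge0$, rotational and hyperbolic.} I claim that $\theta<\pi/2$ throughout. Let $t_1$ be the first time with $\theta(t_1)=\pi/2$. On $(0,t_1)$ we have $x'=\cos\theta>0$ and $x_0\ge0$, so $x(t_1)>0$. Hence $\theta'(t_1)=-T(x(t_1))<0$. This contradicts the fact that $\theta$ reaches $\pi/2$ from below, which requires $\theta'(t_1)\ge 0$. Therefore $x'=\cos\theta>0$. When $x_0=0$ in the rotational case the initial point lies on the axis, and I will work on $(0,t_{\max})$, where $x>0$.

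\textbf{The case $x_0<0$.} Critical points of $x$ are exactly the times where $\theta=\pi/2$, since $\theta$ stays in $(0,\pi)$ by the first step. At such a time $t_1$ we have $\theta'(t_1)=-T(x(t_1))$.
\begin{itemize}
\item In the parabolic case this equals $1$, so every crossing of $\pi/2$ is transversal and upward. Two consecutive crossings would require one of them to be downward, so there is at most one crossing. This argument does not use the sign of $x_0$.
\item In the rotational and hyperbolic cases there are two subcases. If $x$ reaches $0$ before $\theta$ hits $\pi/2$, the previous step applies from that moment on (with the rotational singularity ending the interval), and there is no critical point. Otherwise, at the first crossing $x(t_1)<0$, so $\theta'(t_1)>0$ and $\theta$ enters $(\pi/2,\pi)$. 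From then on $x'<0$, so $x$ stays negative, and any return to $\pi/2$ would need $\theta'\le0$ there, i.e.\ $T(x)\ge0$, which is impossible. Hence there is at most one critical point.
\end{itemize}

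The main delicate point is the bookkeeping at degenerate crossings, where the relevant derivative could vanish. I avoid this by always evaluating $\theta'$ at the first crossing and checking that it has a strict sign: $\dot{\varphi}>0$ handles the crossings at $0$ and $\pi$, and $x\neq0$ handles the crossings at $\pi/2$. The other subtle point is the rotational singularity at $x=0$, which I handle by the maximal-interval remark above.
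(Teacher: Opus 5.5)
Your proof is correct and is essentially the paper's argument recast in terms of $\theta$: the paper evaluates $z''=\dot{\varphi}(z)\cos^2\theta>0$ and $x''=(\tanh x)^i\sin^2\theta$ at the first critical points of $z$ and $x$, which is the same sign check as yours since $z''=\cos\theta\,\theta'$ and $x''=-\sin\theta\,\theta'$. The one loose end, which the paper also leaves implicit, is the hyperbolic subcase where $x$ reaches $0$ exactly when $\theta$ first hits $\pi/2$ (there $\theta'(t_1)=0$ and your claim $x(t_1)<0$ fails), but this case cannot occur: $x\equiv0$, $\theta\equiv\pi/2$, $z=t+c$ solves the hyperbolic system, so uniqueness would force $x_0=0$.
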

\begin{proof} 
    Suppose, by contradiction, that $t_0$ is the first critical point of $z$. Observe that $z$ is increasing in a sufficiently small right neighbourhood of $t=0$. Indeed, if $\theta_0=0$, then $z'(0)=0$ and $z''(0)>0$. If $\theta_0>0$, then $z'(0)>0$. Therefore, if $t_0>0$ is the first critical point of $z$, it cannot be a local minimum. However,
    $$z''(t_0) = \dot{\varphi}(z(t_0))\cos^2\theta(t_0) > 0,$$
    which is a contradiction. 

    We now consider the behaviour of $x$.  First, in the rotational and the hyperbolic cases, assume that $x_0 \geq 0$ . Suppose, by contradiction, that $t_1$ is the first critical point of $x$. Thus, since $x'(0) > 0$,  $x''(t_1) \leq 0$. On the other hand, as $x$ increases until $t_1$ and $x_0 > 0$, then $x(t_1) > 0$. Consequently,   
    $$x''(t_1) = \sin^2 \theta(t_1) \left( \tanh x(t_1)\right)^i > 0.$$
    That is also a contradiction. 
    
    Finally, consider $x_0 < 0$. In this case, if $x$ has a critical point, it must be unique. We first consider the rotational and hyperbolic cases, corresponding to $i=-1$ and $i=1$, respectively. Suppose, by contradiction, that $t_1$ and $t_2$ are critical points of $x$ with $t_1 < t_2$.  We may choose $t_1$ to be the first
critical point of $x$. Hence $t_1$ is a local maximum, since $x'(0) > 0$, and 
    $$ x''(t_1) =  \sin^2 \theta(t_1)\left( \tanh x(t_1)\right)^i < 0. $$
    That implies that $x(t_1) < 0$.
    
Thus, $x$ decreases in the interval $[t_1, t_2]$, implying that $x(t_2) < x(t_1) < 0 $ and $ x''(t_2) \geq 0$. However,
    $$x''(t_2) = (\tanh x(t_2))^i\sin^2 \theta < 0,$$
 which is a contradiction. Therefore,  $x(t)$  admits at most one critical point.
    
    In the parabolic case, every critical point $t_0$ of $x$ satisfies
    $$x''(t_0)= - \sin^2\theta(t_0) < 0.$$
Hence every critical point of $x$ is a strict local maximum.
Consequently, $x$ cannot have more than one critical point.
    Therefore, the result follows. 
\end{proof}

Thus, by the last lemma, we conclude that every $\varphi$-minimal surface is a vertical graph after a positive time and the height function does not have a maximum point. Observe that if $\Sigma$ is a vertical graph of a function $u$,  then by (\cite{de2020hypersurfaces}, Equation (23)), we have that $u$ satisfies
\begin{equation} \label{edo graph}
    u'' = \left(\dot{\varphi}(u) - (\tanh r)^i u'\right)\left(1+u'^2\right).
\end{equation}

We already know that the solution of this ODE exists and it is unique for each initial condition. Moreover, suppose that $u'(0) = 0$; then $u''(0) = \dot{\varphi}(u(0)) > 0$ and $r = 0$ is a minimum point of $u$. We can prove, similarly to Lemma \ref{crescimento de z}, that $u'(r) > 0$ for all $r > 0$. In this context, we prove the following proposition, motivated by Proposition 4.2 of \cite{martinez2022equilibrium}. 

\begin{prop} \label{comparar}
Let $u_1$ and $u_2$  be solutions of (\ref{edo graph})
with initial conditions
$$u_i(0) = u_{i,0}, \quad u_i'(0) = v_{i,0}.$$
and $\dot{\varphi}_1, \, \dot{\varphi}_2:(a, \infty)\to \mathbb{R}$ let be smooth functions such that 
$$\dot{\varphi}_1(u_1) > \dot{\varphi}_2(u_2).$$
{If $v_{2,0} = 0$ and $v_{1,0} \geq 0$. Then 
$$ u'_1 > u'_2.$$}
\end{prop}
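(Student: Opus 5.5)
We will compare $u_1'$ and $u_2'$ through a first-crossing argument on $(0,\infty)$, the domain of the solutions. We read the hypothesis $\dot{\varphi}_1(u_1)>\dot{\varphi}_2(u_2)$ as holding along the solutions, that is, $\dot{\varphi}_1(u_1(r))>\dot{\varphi}_2(u_2(r))$ for all $r\ge 0$. Write equation \eqref{edo graph} as
$$u_j''=F_j(r,u_j,u_j'),\qquad F_j(r,u,p)=\bigl(\dot{\varphi}_j(u)-(\tanh r)^i p\bigr)(1+p^2).$$
The claim is that $u_1'(r)>u_2'(r)$ for every $r>0$. Equality may occur at $r=0$ when $v_{1,0}=0$.

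\emph{Step 1: the claim holds near $r=0$.} If $v_{1,0}>0=v_{2,0}$, this follows from continuity. If $v_{1,0}=0$, we have $u_1'(0)=u_2'(0)=0$, and the equation gives
$$u_1''(0)-u_2''(0)=\dot{\varphi}_1(u_{1,0})-\dot{\varphi}_2(u_{2,0})>0.$$
Hence $u_1'-u_2'$ vanishes at $0$ with positive derivative, so it is positive on some interval $(0,\varepsilon)$. In the rotational case ($i=-1$) the term $\coth r\,u'$ is singular at $r=0$. There we use that $u_j'(r)/r\to u_j''(0)$, and that regularity at the axis gives $u_j''(0)=\dot{\varphi}_j(u_j(0))/2$. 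Substituting this, the difference is $\tfrac12\bigl(\dot{\varphi}_1(u_{1,0})-\dot{\varphi}_2(u_{2,0})\bigr)>0$, and the same conclusion follows.

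\emph{Step 2: the difference never returns to zero.} Suppose, for contradiction, that $r_0>0$ is the first point with $u_1'(r_0)=u_2'(r_0)=:p_0$. Since $u_1'-u_2'>0$ on $(0,r_0)$ and vanishes at $r_0$, we get $(u_1'-u_2')'(r_0)\le 0$. On the other hand, the friction terms $-(\tanh r_0)^i p_0(1+p_0^2)$ are identical for the two solutions because the first derivatives agree at $r_0$, so they cancel:
$$u_1''(r_0)-u_2''(r_0)=\bigl(\dot{\varphi}_1(u_1(r_0))-\dot{\varphi}_2(u_2(r_0))\bigr)(1+p_0^2)>0.$$
This contradicts $(u_1'-u_2')'(r_0)\le 0$. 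Therefore $u_1'>u_2'$ on the whole common interval of existence.

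\emph{Main obstacle.} The crossing step is immediate, since equal slopes make the friction terms match. The delicate part is Step 1 in the rotational case, because the equation is singular at $r=0$. There the justification of $u_j''(0)=\dot{\varphi}_j(u_j(0))/2$ rests on smoothness of the surface at the axis. If instead the hypothesis is intended only pointwise at $r=0$ or at $r_0$, the same computation applies wherever the inequality is assumed, so the argument goes through as long as the inequality is available at the first crossing point.
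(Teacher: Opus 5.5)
Your proof is correct and follows essentially the same first-crossing argument as the paper. The paper also shows that $d=u_1'-u_2'$ is positive near $r=0$, using $d(0)>0$ or $d(0)=0$ with $d'(0)>0$, and then reaches a contradiction at the first zero $r_1$, where the $(\tanh r)^i u'$ terms cancel. Your extra care at $r=0$ in the rotational case is a harmless refinement that only confirms the sign: there $d'(0)$ is really $\tfrac12\bigl(\dot{\varphi}_1(u_{1,0})-\dot{\varphi}_2(u_{2,0})\bigr)$, not the paper's unhalved expression.
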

\begin{proof} 
  Define
$$
d(r)=u_1'(r)-u_2'(r).
$$
If $v_1>0$, then $d(0)>0$. If $v_1=0$, then 
$$
d(0)=0,\qquad
d'(0)=\dot{\varphi}_1(u_1(0))-\dot{\varphi}_2(u_2(0))>0.
$$
Thus, in both cases, $d>0$ for sufficiently small $r>0$.

Suppose, by contradiction, that $r_1>0$ is the first
point where $d(r_1)=0$. Then $d'(r_1)\leq0$.
However, since $u_1'(r_1)=u_2'(r_1)$, equation
\eqref{edo graph} gives
$$
d'(r_1)
=
\left(1+u_1'(r_1)^2\right)
\left[
\dot{\varphi}_1(u_1(r_1))-\dot{\varphi}_2(u_2(r_1))
\right]>0.
$$
This contradiction proves that
$u_1'(r)>u_2'(r)$ for every $r>0$
in the common interval of existence.
\end{proof}

Moreover, knowing about the maximal interval of the solution helps us to understand its behaviour, and for this reason we prove the following proposition.

\begin{prop}\label{maximal interval}
    If $z$ is a strictly increasing function, the solution $(x, z, \theta)$ of (\ref{system 1}) is defined for all positive time in the parabolic and hyperbolic cases. Moreover, if $z$ increases and $\liminf_{t \to t_{\max}^+} |x| > 0$, then the solution is also defined for all positive time in the rotational case. 
\end{prop}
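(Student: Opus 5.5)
We argue through the standard blow-up criterion for ODEs. The right-hand side of \eqref{system 1} is smooth on $\{(x,z,\theta): z>a\}$ in the parabolic and hyperbolic cases, and on $\{x\neq 0,\ z>a\}$ in the rotational case. So if $t_{\max}<\infty$, the solution $(x(t),z(t),\theta(t))$ must leave every compact subset of this domain as $t\to t_{\max}$. Our plan is to assume $t_{\max}<\infty$ and rule out each possible escape route.

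First, $x$ and $z$ are controlled for free. Since $|x'|,|z'|\le 1$, on $[0,t_{\max})$ we have
$$|x(t)-x_0|\le t_{\max}, \qquad |z(t)-z_0|\le t_{\max}.$$
Because $z$ is strictly increasing, $z(t)\ge z_0>a$, so $z$ stays in the compact interval $[z_0,\, z_0+t_{\max}]\subset(a,\infty)$. By (H1) and smoothness, $\dot\varphi$ is continuous on this interval, hence bounded there, say by $M$. This is the key point, and it is where monotonicity of $z$ is used: it keeps $z$ away from the singular endpoint $a$, where $\dot\varphi\to\infty$ by Remark \ref{implicações hipotese 3}.

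Second, we bound $\theta$. From the third equation of \eqref{system 1},
$$|\theta'|\le M+|(\tanh x)^i|.$$
In the parabolic case $(\tanh x)^0=-1$, and in the hyperbolic case $|\tanh x|\le 1$, so $|\theta'|\le M+1$. Hence $\theta$ is bounded on $[0,t_{\max})$ and in fact has a limit as $t\to t_{\max}$. The same holds for $x$ and $z$, since their derivatives are bounded. The solution therefore stays in a compact subset of the domain and extends past $t_{\max}$, a contradiction. So $t_{\max}=\infty$.

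In the rotational case the only additional obstruction is the singularity of $\coth x$ at $x=0$. The hypothesis $\liminf_{t\to t_{\max}}|x|>0$ gives $\delta>0$ with $|x|\ge\delta$ near $t_{\max}$. Then $|\coth x|\le\coth\delta$ there, so again $|\theta'|\le M+\coth\delta$. The argument of the previous paragraph then applies verbatim, with the compact set chosen inside $\{|x|\ge\delta\}$. The main subtlety is simply organizing the compactness argument correctly: $z$ bounded below by $z_0$ and above by $z_0+t_{\max}$, and $x$ bounded away from $0$ in the rotational case. No estimate beyond boundedness of $\dot\varphi$ on compact subintervals of $(a,\infty)$ is needed.
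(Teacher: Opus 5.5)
Your proof is correct and follows the same route as the paper: you bound the vector field along the trajectory using $z\ge z_0>a$ and the boundedness of $(\tanh x)^i$ (or of $\coth x$ under the $\liminf$ hypothesis in the rotational case), conclude that on a finite interval $[0,t_{\max})$ the solution stays in a compact subset of the domain, and then contradict the standard escape criterion. The only small difference is that you bound $\dot\varphi(z(t))$ by continuity of $\dot\varphi$ on the compact interval $[z_0,z_0+t_{\max}]$, while the paper uses $\ddot\varphi<0$ to get $\dot\varphi(z(t))\le\dot\varphi(z_0)$. As a result, your version does not need concavity of $\varphi$.
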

\begin{proof} Let  $(x, z, \theta)$ be a solution of (\ref{system 1}) with initial conditions $x(0)=x_0$, $z(0)=z_0$ and $\theta(0)=\theta_0$, defined on a maximal interval $(t_{\textnormal{min}}, t_{\textnormal{max}})$. Observe that in the hyperbolic and parabolic cases, the function $(\tanh(x))^i$  is bounded for all $x$.
    Since $z$ is strictly increasing, and   $\ddot{\varphi}  < 0$, the function $\dot{\varphi}(z(t))$ strictly decreases. Thus, given an initial condition $z(0) = z_0 \in (a, \infty)$, we have that $\dot{\varphi}\left(z(t)\right) < \dot{\varphi}(z_0)$, for all $t$ in $(0,t_{\textnormal{max}})$. Moreover, we know that the solution $y(t) = (x,z,\theta)$ satisfies the integral equation
    \begin{equation*}
        y(t) = y_0 + \int_0^t f(y),
    \end{equation*}
    where $f(y) = (\cos \theta, \sin \theta, \cos\theta \dot{\varphi}(z) - \sin \theta \left( \tanh x)^i\right)$ and $y_0 = y(0).$ Then
    \begin{align*}
        \parallel f(y) \parallel^2 & = 1 + \left(\cos \theta \dot{\varphi}(z) - \sin\theta\left(\tanh x\right)^i\right)^2 \\
        & \leq 1 + \left(\dot{\varphi}(z) + 1 \right)^2 \leq L_1,
    \end{align*}
    with $L_1: = 1 + \left(\dot{\varphi}(z_0) + 1\right)^2$. Then, the solution $y$ satisfies
    $$ \parallel y(t) \parallel \leq \parallel y_0 \parallel 
 + \int_0^t L_1 = \parallel y_0 \parallel + t L_1.$$
 Suppose, by contradiction, that the $t_{\max} < \infty$. Since $z(t) \geq z_0>a$, the solution defined in $[0,t_{\max})$ is contained in the compact set
 $$\left\{ y \in \mathbb{R}\times (a, \infty)\times \mathbb{R}; \parallel y(t) \parallel \leq  \parallel y_0 \parallel + t_{\max}L_1 \right\}.$$
 However, by Corollary 2 in Section 2.4 of \cite{perko2013differential}, $t_{\max} = \infty$.  Now, suppose that $\liminf_{t\to t_{\max}^+}|x| > 0$, 
then the function $\coth(x)$ is bounded, and analogously, we prove that the solution is defined for all positive time.
\end{proof}

The combination of these results provides the necessary foundation for understanding each case of invariant surfaces. In this paper, we describe in details the surfaces invariant under parabolic translations and rotations.  The hyperbolic-invariant setting is geometrically richer and requires more involved analysis than the rotational and parabolic settings, so we treat it separately and in more detail in \cite{hyperbolic-FMS}.

\section{Parabolic Case} \label{parabolic section}

Let $\Sigma$ be a surface in $\mathbb{H}^2\times \mathbb{R}$ foliated by horocycles curves. As mention in Section \ref{simetrias de h2}, the surface $\Sigma$ is determined by the functions $x$ and $z$ of the profile curve \eqref{prob 1}. Theses functions are solutions of \eqref{ode parabolic 2}, 
\begin{equation}\label{ode parabolic 2}
    \left\{\begin{array}{ll}
         x ' & = \cos \theta  \\
          z' & = \sin \theta \\
          \theta '& = \dot{\varphi}(z)\cos \theta + \sin \theta.
    \end{array}\right.
\end{equation}
By the existence and uniqueness Theorem, there exists a unique solution of (\ref{ode parabolic 2}) for each given initial condition {$(x_0,z_0,\theta_0)$}. We will analyse the behaviour of the solution for positive and negative times, which will enable us to classify the parabolic $\varphi$-minimal surfaces.

Note that the system \eqref{ode parabolic 2} can be written as a two-dimensional system depending only on the functions $z$ and $\theta$:
\begin{equation} \label{ode parabolic 2d}
    \left\{\begin{array}{ll}
          z' & = \sin \theta \\
          \theta '& = \dot{\varphi}(z)\cos \theta + \sin \theta.
    \end{array}\right.
\end{equation}
From now on, we will study the solutions of \eqref{ode parabolic 2d}.

\begin{lemma} \label{lema 3}
    Let  $\theta$ and $z$ be solutions of \eqref{ode parabolic 2d} with the initial condition
    \begin{equation} \label{initial cond parabolic}
        z(0) = z_0, \quad \theta(0) = 0.
    \end{equation}
    Then, the solution is defined for all positive time, and both $z$ and $\theta$ are increasing functions. Moreover, 
    $$\lim_{t \to \infty}z(t) = \infty.$$
\end{lemma}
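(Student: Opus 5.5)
The plan is to work directly with the planar system \eqref{ode parabolic 2d}, proving first that $\theta$ stays in $(0,\pi)$ and is strictly increasing, and then deducing the statements about $z$. At $t=0$ we have $\theta'(0)=\dot{\varphi}(z_0)>0$ by (H1), so $\theta>0$ on some interval $(0,\varepsilon)$. The central identity I will use is the value of $\theta''$ at a critical point $t_0$ of $\theta$. Differentiating the third equation, and using $\theta'(t_0)=0$ to kill the terms multiplied by $\theta'$, gives
$$\theta''(t_0)=\ddot{\varphi}(z(t_0))\,z'(t_0)\cos\theta(t_0)=\ddot{\varphi}(z(t_0))\sin\theta(t_0)\cos\theta(t_0).$$

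Next I will show that $\theta'>0$ as long as $\theta\in(0,\pi)$, by examining each range where a first critical point $t_0>0$ of $\theta$ could occur.
\begin{itemize}
\item If $\theta(t_0)\in(0,\pi/2]$, then $\theta'(t_0)=\dot{\varphi}\cos\theta+\sin\theta>0$ because both terms are nonnegative and $\sin\theta>0$. So no critical point can occur there. In particular $\theta$ passes through $\pi/2$ with $\theta'=1$.
\item If $\theta(t_0)\in(\pi/2,\pi)$, then $\sin\theta\cos\theta<0$ and $\ddot{\varphi}<0$, so the identity above gives $\theta''(t_0)>0$. Thus $t_0$ would be a strict local minimum, which is impossible for the first critical point of a function that is increasing before it.
\item $\theta$ cannot reach $\pi$. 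Near $\theta=\pi$ we have $\theta'\approx-\dot{\varphi}(z)<0$, so $\theta'$ would have to vanish before $\theta$ reaches $\pi$, and this was just excluded.
\end{itemize}
Hence $\theta$ is strictly increasing with values in $(0,\pi)$ for $t>0$. It follows that $z'=\sin\theta>0$, so $z$ is strictly increasing. Proposition \ref{maximal interval} (parabolic case) then gives $t_{\max}=\infty$.

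For the limit, $\theta$ is monotone and bounded, so $\theta(t)\to\theta_\infty\in(0,\pi]$. Suppose, for contradiction, that $z$ is bounded, so that $z\to z_\infty\in(a,\infty)$. Then $\theta'(t)\to\dot{\varphi}(z_\infty)\cos\theta_\infty+\sin\theta_\infty$, and this limit must be $0$ because $\theta$ converges. If $\theta_\infty=\pi$, the limit equals $-\dot{\varphi}(z_\infty)<0$, a contradiction. Otherwise $\tan\theta_\infty=-\dot{\varphi}(z_\infty)$ forces $\theta_\infty\in(\pi/2,\pi)$, so $z'\to\sin\theta_\infty>0$ and $z$ is unbounded, again a contradiction. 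Therefore $z\to\infty$.

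The main obstacle is the middle step: excluding a first critical point of $\theta$ in $(\pi/2,\pi)$ through the sign of $\theta''$, and making sure $\theta$ does not approach $\pi$. This is where (H1) is used essentially, through both $\dot{\varphi}>0$ and $\ddot{\varphi}<0$.
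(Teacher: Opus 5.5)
Your proof is correct and follows essentially the same route as the paper: you rule out a first critical point of $\theta$ via $\theta''(t_0)=\ddot{\varphi}(z)\sin\theta\cos\theta>0$ when $\cos\theta(t_0)<0$, and you exclude bounded $z$ by passing to the limits of $z'$ and $\theta'$. The only difference is cosmetic. You obtain $\theta\in(0,\pi)$, hence $z'>0$, directly from $\theta'=-\dot{\varphi}(z)<0$ at $\theta=\pi$, whereas the paper quotes Lemma~\ref{crescimento de z} for $z'>0$ and global existence before treating $\theta$; that lemma rests on the same computation, since $z''=\dot{\varphi}(z)$ at $\theta=\pi$.
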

\begin{proof}
    Note that $z'(0) = 0$ and $z''(0) > 0$, then, $z$ increases near $t = 0$. By Lemma \ref{crescimento de z} and Proposition \ref{maximal interval}, the solution is defined for all positive time and $z' > 0$. 

    Observe also that $\theta'(0) = \dot{\varphi}(z) > 0$. Suppose, by contradiction, that there exists $t_0 > 0$ such that $\theta'(t_0) = 0$. Hence, since $\sin \theta(t_0)>0$, it follows that  $\cos \theta(t_0) < 0$. Moreover, $\theta''(t_0) \leq 0$. However,
    $$\theta ''(t_0) = \cos \theta(t_0) \sin \theta(t_0) \ddot{\varphi}(z) > 0,$$
    which is a contradiction. 

Therefore, both $z$ and $\theta$ are increasing functions, with $0<\theta<\pi$, which implies that $\lim_{t \to \infty} \theta(t) = \theta_L$. Suppose by contradiction that $z$ is bounded from above, then there exists the limit $\lim_{t \to \infty} z(t) = z_L$. On the one hand, since $\theta$ as limit, we conclude that $\lim_{t \to \infty} z'(t) = \sin(\theta_L)$ must be zero, so $\theta_L = \pi.$ On the other hand from \eqref{ode parabolic 2d} we have $\lim_{t \to \infty} \theta'(t) = - \dot{\varphi}(z_L)<0$, which is a contradiction with the fact that $\theta'(t)>0, \, \forall t>0$.
\end{proof}

Another important part of characterizing this type of surface is to describe the behaviour of the function $x$. Describing the behaviour of $x$ provides insights into the distance between the surface and the vertical axis, as well as the convexity of the surface. In this context, the previous lemma is also useful to determine the behaviour of $x$,as described in the following proposition.

\begin{prop} \label{prop x comportamento}
    Let $x$, $z$ and $\theta$ solutions of (\ref{ode parabolic 2}) with initial conditions
  \begin{equation}\label{icp} {x(0)=x_0, \quad z(0)=z_0, \quad \theta(0) =0.}\end{equation}
    Then, $x$ has a unique critical point, which is a maximum, and 
    $$\lim_{t \to \infty} x(t) = -\infty$$
\end{prop}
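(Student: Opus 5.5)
The plan is to combine Lemma \ref{lema 3} with a study of the limit angle $\theta_L=\lim_{t\to\infty}\theta(t)$. Since $x'=\cos\theta$, I will locate the points where $\theta$ crosses $\pi/2$.

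First I show that $\theta$ crosses $\pi/2$ exactly once. By Lemma \ref{lema 3}, $\theta$ is strictly increasing with $0<\theta<\pi$ for $t>0$, so there is at most one $t_1$ with $\theta(t_1)=\pi/2$, and $\cos\theta$ changes sign from positive to negative there. To show such a $t_1$ exists, suppose $\theta<\pi/2$ for all $t>0$. Then $\theta'\ge\sin\theta\ge\sin\theta(t_\ast)>0$ for $t\ge t_\ast>0$, because $\theta$ is increasing and stays in $(0,\pi/2)$. This forces $\theta\to\infty$, a contradiction. Hence $x'>0$ on $[0,t_1)$ and $x'<0$ on $(t_1,\infty)$, so $t_1$ is the unique critical point of $x$ and it is a maximum. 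This is consistent with Lemma \ref{crescimento de z}, where every critical point in the parabolic case is a strict local maximum.

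Next I identify $\theta_L$. From $\theta'=\dot{\varphi}(z)\cos\theta+\sin\theta$, (H2) and Lemma \ref{lema 3} (which gives $z\to\infty$), we get $\theta'\to\sin\theta_L$. If $\sin\theta_L>0$, then $\theta\to\infty$, which is impossible. Therefore $\theta_L=\pi$ (since $\theta_L>\pi/2$), and so $x'=\cos\theta\to-1$.

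Finally, since $x'(t)\le-1/2$ for all large $t$ and the solution is defined for all positive time (Lemma \ref{lema 3}), integrating gives $x(t)\to-\infty$.

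I expect the main obstacle to be the rigorous passage $\theta'\to\sin\theta_L\Rightarrow\sin\theta_L=0$. This requires that $\dot{\varphi}(z(t))\cos\theta(t)\to0$, which holds because $z\to\infty$, $\dot{\varphi}\to0$ and $\cos\theta$ is bounded. It also requires that a positive limit of $\theta'$ is incompatible with the bound $\theta<\pi$, which follows by integrating over $[T,\infty)$.
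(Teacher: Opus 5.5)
Your proof is correct, but it reaches $x\to-\infty$ by a different route from the paper's.

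For the crossing of $\pi/2$, the paper also starts from $\theta'=\dot{\varphi}(z)\cos\theta+z'>z'$. It integrates this to $\theta>z-z_0$ and invokes $z\to\infty$ from Lemma~\ref{lema 3}. Your uniform bound $\theta'\ge\sin\theta(t_\ast)$ avoids needing the divergence of $z$. The paper takes uniqueness from Lemma~\ref{crescimento de z}, while you take it from the strict monotonicity of $\theta$; both are fine.

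The real difference is the limit. The paper never identifies $\theta_L$. It only uses that $\dot{\varphi}(z(t))<L$ for a constant $L>0$ along the orbit, which holds because $z$ increases and $\ddot{\varphi}<0$. For $t>t_0$, where $x'<0$, this gives $\theta'=\dot{\varphi}(z)x'+z'>Lx'+z'$. Integrating yields $Lx<\theta-z+c$, and the conclusion follows from $\theta<\pi$ and $z\to\infty$. That argument does not use (H2), and it bounds $x$ above by $-z/L$ plus a constant, a comparison with the height.

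Your argument uses (H2) to get $\theta_L=\pi$, which gives sharper asymptotics. You get $x'\to-1$ and $z'\to0$, so the profile curve becomes horizontal and $x(t)=-t+o(t)$. The paper obtains this kind of information only for the negative-time branch, in Proposition~\ref{tempo negativo prop}.

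Your first step is also logically redundant given the second. Since $\theta$ increases strictly from $0$, $\theta_L\in(0,\pi]$, so $\sin\theta_L=0$ already forces $\theta_L=\pi$ and hence the unique crossing of $\pi/2$.
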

\begin{proof}
    By Lemma \ref{crescimento de z}, we know that if $x$ has a critical point, it must be unique. So, it is enough to prove that there exists a critical point of $x$. Firstly, observe that in a small neighbourhood of $t = 0$ the function $x$ is increasing. Suppose, by contradiction, a the critical point does not exist. In this case, $\cos \theta > 0$ for all $t > 0$. Then, 
    $$\theta '  = \dot{\varphi}(z)\cos\theta + z' > z'.$$
    This inequality implies 
        $$\theta > z - z_0,$$
    for all $t > 0$. Since $\theta$ is bounded by the proof of Lemma \ref{lema 3}, we conclude that there exists a positive constant $c_1$ such that $c_1 > z$, resulting in a contradiction, by the Lemma \ref{lema 3}.
   Therefore, $x$ has a unique critical point $t_0$. Moreover, the critical point of $x$ must be a maximum point.

    By hypotheses, the function $\dot{\varphi}(z(t))$ decreases. Then,  there exists a positive constant $L$ such that $\dot{\varphi}(z(t)) < L$, for $t>0$ large enough. Since $\theta$ is an increasing function with $\theta(t_0) = \pi / 2$ we have $x' < 0$ for $t > t_0$. Consequently, $\dot{\varphi}(z) x' > x' L$ and
    \begin{align*}
        \theta' & > Lx'+z',
    \end{align*}
    which implies that, for all $t>t_0$,
     \begin{align*}
        \theta > Lx + z +c_1,
    \end{align*}
    where $c_1$ is a constant. Hence, since $\theta$ is bounded and $\lim_{t\to \infty} z = \infty $, we have that $\lim_{t\to \infty} x = - \infty$. 
\end{proof}

\begin{remark}
    By this proposition, if the initial condition $x_0 > 0$, we conclude that there exists a $t_1 > t_0$ such that $x(t_1)  = 0$, that is, the surface touches the vertical axis at this time.
\end{remark}

To finish the analysis about the solutions of (\ref{ode parabolic 2})-(\ref{icp}), it remains to study what happen in a negative time. Hence, consider $t<0$ and define the functions

\begin{equation*}
    \tilde{x}(t) = x(-t), \quad \tilde{z}(t) = z(-t), \quad  \text{and} \quad \tilde{\theta}(t)= \theta(-t)+\pi,
\end{equation*}
Note that $(\tilde{x}, \tilde{z}, \tilde{\theta})$ are solutions of the ODE system

\begin{equation} \label{ode parabolic negative time 1}
    \left\{\begin{array}{lll}
         \tilde{x}' & = &\cos \tilde{\theta}  \\
         \tilde{z}' & = &\sin \tilde{\theta} \\
         \tilde{\theta}' & = & \dot{\varphi}(\tilde{z}) \cos\tilde{\theta} + \sin \tilde{\theta},
    \end{array}\right.
\end{equation}
with the initial conditions 
{\begin{align}\label{initial cond para neg}
    \tilde{x}(0) = x_0, \quad \tilde{z}(0) = z_0, \quad \tilde{\theta}(0) = \pi.
\end{align}

These functions are well defined, and there exists a unique solution to (\ref{ode parabolic negative time 1})-(\ref{initial cond para neg}).

\begin{prop}\label{tempo negativo prop}
    Let $\tilde{x}$, $\tilde{z}$ and $\tilde{\theta}$ solutions of (\ref{ode parabolic negative time 1})-(\ref{initial cond para neg}). Then, the solution is defined for all positive time. Moreover, in  $(0,\infty)$,  $\tilde{x}$ is decreasing,  $\tilde{z}$ is increasing and $\tilde{\theta}$ has an unique critical point $t_0$, $\theta(t_0)>\frac{\pi}{2}$,  which is a minimum. Moreover,
    \begin{equation*}
        \lim_{t\to \infty}\tilde{x}(t) =- \infty, \quad \lim_{t\to \infty}\tilde{z}(t) = \infty, \quad\text{and} \quad 
        \lim_{t\to \infty} \tilde{\theta} = \pi.
    \end{equation*}
\end{prop}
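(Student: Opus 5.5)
The plan is to work directly with the two-dimensional reduction \eqref{ode parabolic 2d} for $(\tilde z,\tilde\theta)$. The first goal is to show that $\tilde\theta$ stays in the strip $(\pi/2,\pi)$ for all $t>0$; monotonicity of $\tilde x$, $\tilde z$ and global existence then follow from earlier results. After that I will study the critical points of $\tilde\theta$ and the limits.

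\emph{Step 1: initial behaviour and the invariant strip.} At $t=0$ we have $\tilde z'(0)=0$ and $\tilde\theta'(0)=-\dot\varphi(z_0)<0$. Hence $\tilde\theta$ drops below $\pi$, and $\tilde z''(0)=\cos\tilde\theta(0)\,\tilde\theta'(0)=\dot\varphi(z_0)>0$. The two boundary lines of the strip are both barriers.
\begin{itemize}
\item Whenever $\tilde\theta=\pi/2$, the equation gives $\tilde\theta'=1>0$, so $\tilde\theta$ cannot cross $\pi/2$ from above.
\item Whenever $\tilde\theta=\pi$ at some $t>0$ (with $\tilde z>a$), we get $\tilde\theta'=-\dot\varphi(\tilde z)<0$, so $\tilde\theta$ cannot reach $\pi$ from below.
\end{itemize}
Thus $\tilde\theta(t)\in(\pi/2,\pi)$ for all $t>0$. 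Consequently $\tilde z'=\sin\tilde\theta>0$ and $\tilde x'=\cos\tilde\theta<0$. Since $\tilde z$ is strictly increasing, Proposition \ref{maximal interval} (parabolic case) gives existence for all positive time.

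\emph{Step 2: critical points of $\tilde\theta$.} Differentiate the third equation and evaluate at a point where $\tilde\theta'=0$:
$$\tilde\theta''=\ddot\varphi(\tilde z)\sin\tilde\theta\cos\tilde\theta .$$
By (H1) and Step 1, this is $>0$. So every critical point is a strict local minimum, and hence there is at most one. Any such point satisfies $\tilde\theta(t_0)>\pi/2$ by Step 1.

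To show a critical point exists, argue by contradiction. Suppose $\tilde\theta'<0$ for all $t>0$. Then $\tilde\theta\downarrow\theta_L\in[\pi/2,\pi)$, so $\sin\theta_L>0$ and $\tilde z'\geq c>0$. Hence $\tilde z\to\infty$, and by (H2) $\dot\varphi(\tilde z)\to 0$. Then $\tilde\theta'\to\sin\theta_L>0$, contradicting $\tilde\theta'<0$.

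\emph{Step 3: limits.} This is the main point of care: limits of $\tilde\theta'$ must be justified via convergence of its right-hand side. For $t>t_0$, $\tilde\theta$ is increasing and bounded by $\pi$, so $\tilde\theta\to\theta_L\in(\pi/2,\pi]$.
\begin{itemize}
\item If $\tilde z$ were bounded, it would converge to some $z_L$. Then $\tilde z'\to 0$ forces $\theta_L=\pi$, and so $\tilde\theta'\to-\dot\varphi(z_L)<0$. This contradicts $\tilde\theta$ increasing. Hence $\tilde z\to\infty$.
\item With $\tilde z\to\infty$, (H2) gives $\tilde\theta'\to\sin\theta_L$. Since $\tilde\theta$ converges, this limit must be $0$, so $\theta_L=\pi$.
\end{itemize}
Finally $\tilde x'=\cos\tilde\theta\to-1$, which yields $\tilde x\to-\infty$.
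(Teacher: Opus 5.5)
Your proof is correct and follows essentially the same route as the paper's. You trap $\tilde\theta$ in $(\pi/2,\pi)$, use $\tilde\theta''=\ddot\varphi(\tilde z)\sin\tilde\theta\cos\tilde\theta>0$ at critical points to get at most one (a minimum), rule out $\tilde\theta'<0$ for all $t$ by forcing $\tilde z\to\infty$ and $\tilde\theta'\to\sin\theta_L>0$, and then obtain $\tilde z\to\infty$ and $\theta_L=\pi$ by the same contradiction arguments. The only differences are cosmetic: you get the strip from a two-sided barrier argument on $\tilde\theta$, which makes explicit the upper barrier at $\pi$ that the paper leaves to the monotonicity of $\tilde z$ via Lemma \ref{crescimento de z}, and you deduce $\tilde x\to-\infty$ from $\tilde x'\to-1$ rather than from $\tilde x'<0$, $\tilde x''<0$.
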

\begin{proof}
    Arguing as in Lemma \ref{crescimento de z}, we can show that in $(0,t_{max})$, $\tilde{x}$  is decreasing and $\tilde{z}$ is increasing. Moreover, analogous to Proposition \ref{maximal interval}, we prove that the solution is defined for all positive time.  Concerning $\tilde{\theta}$, we have $\tilde{\theta}'(0)= - \dot{\varphi}(z_0)<0$. Therefore $\tilde{\theta}$ is a decreasing function near $t=0$. We claim that $\tilde{\theta} \in (\pi/ 2, \pi)$ for all $t>0.$ In fact, suppose by contradiction that there exists a first $t_* > 0$ such that $\tilde{\theta}(t_*) = \pi / 2$. On the one hand, since $\tilde{\theta}(0)=\pi$, we have $\tilde{\theta}'(t_*) \leq 0$. On the other hand
    $$
    \tilde{\theta}'(t_*) = \dot{\varphi}(\tilde{z}) \cos \tilde{\theta}(t_*) + \sin \tilde{\theta}(t_*) = 1,
    $$
    which is a contradiction. Now we show that $\tilde{\theta}$ has at most one critical point. Suppose, by contradiction, we have two consecutive critical points $t_0<t_1$ for $\tilde{\theta}$. For any of those $t_i$ we have $\sin \tilde{\theta}(t_i)>0$, because $\tilde{z}'>0$, and therefore $\cos \tilde{\theta}(t_i)<0$. Consequently,
    $$\tilde{\theta} ''(t_i) = \cos \tilde{\theta}(t_i) \sin \tilde{\theta}(t_i) \ddot{\varphi}(z) > 0,$$
    which is a contradiction. We conclude that $\tilde{\theta} \in (\pi/ 2, \pi)$ and has at most one critical point. Therefore,  $\tilde{\theta}$ is a strictly monotone function on $[T,\infty)$, for $T>0$ sufficiently large. Since it is bounded, there exists the limit $\lim_{t \to \infty} \tilde{\theta} (t) = \tilde{\theta}_L$, with $\tilde{\theta}_L \in [\pi/2, \pi]$.
   \\ 
Now we show that there exists $t_0>0$ so that $\tilde{\theta}'(t_0)=0$. In fact, suppose by contradiction that $\tilde{\theta}'<0$ for all $t>0$. Then $z'' = \cos(\tilde{\theta}) \tilde{\theta}' >0$ and $z$ is an unbounded function. In this case, $0=\lim_{t\rightarrow \infty} \dot{\varphi} \cos(\tilde{\theta}) \leq -\lim_{t\rightarrow \infty} \sin(\tilde{\theta}) = -\sin(\tilde{\theta}_L) < 0$, which is a contradiction. So, we conclude that $\tilde{\theta}'(t)>0$ in $(t_0,\infty)$. 
    \\
The next step is to prove that $\tilde{z}$ is unbounded. Suppose, by contradiction, that $z$ is bounded. Since $z$ is increasing in $(0,\infty)$ we have $\lim_{t \to \infty} \tilde{z}(t) = l$. Also, if $z'>0$ and $z'' <0$ in $(t_0,\infty)$, we have $0 =\lim_{t\rightarrow \infty}z'(t)$ and $\lim_{t\rightarrow \infty}\tilde{\theta}(t)=\pi$. Thus, $\lim_{t\rightarrow \infty}\tilde{\theta}'(t)=-\dot{\varphi}(l) < 0$, which is impossible. Therefore, $\tilde{z}$ is unbounded.
\\
Now we obtain the value of $\tilde{\theta}_L$. Since both limits $\lim_{t \to \infty} \tilde{\theta}(t)$ and $\lim_{t \to \infty} \tilde{\theta}'(t)$ exist, we have $\lim_{t \to \infty} \tilde{\theta}'(t) = \sin \tilde{\theta}_L = 0$, that is, $\tilde{\theta}_L = \pi$.
\\
Finally, as $x'<0$ and $x''<0$ in $(t_0,\infty)$, we deduce that $\lim_{t\rightarrow \infty} x(t)=-\infty$ which concludes the proof.
\end{proof}
Therefore, we can prove the existence of a $\varphi$-minimal surface with properties similar to a winglike translating. For this reason, we will call it \textit{Parabolic Winglike}.

\begin{theorem} \label{teo parabolic com minimo}
    There exists a family of properly embedded $\varphi$-minimal surface in $\mathbb{H}^2\times\mathbb{R}$ that is invariant under a one-parameter group of parabolic isometries of $\mathbb{H}^2$. The surface has the topology of $\mathbb{R}^2$ and is a bigraph. Moreover, each of its graphical components has unbounded hyperbolic the horizontal coordinate \(x\) of the generating curve, the height function is unbounded, and its generating curve is of winglike type.
\end{theorem}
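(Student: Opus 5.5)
The surface is built from the profile curve given by the solution of \eqref{ode parabolic 2} with initial data \eqref{icp}, that is, $x(0)=x_0$, $z(0)=z_0>a$, $\theta(0)=0$, for arbitrary $x_0\in\mathbb{R}$ and $z_0$; the free parameter $z_0$ gives the family. Since parabolic translations along $x$ are ambient isometries (they move horocycles to horocycles), $x_0$ may be normalized. The plan is to glue the forward-time description with the backward-time one. Lemma \ref{lema 3} and Proposition \ref{prop x comportamento} describe $t\in[0,\infty)$. Proposition \ref{tempo negativo prop} describes $t\in(-\infty,0]$ through the reparametrization $(\tilde x,\tilde z,\tilde\theta)$. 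Together they show that the solution is defined on all of $\mathbb{R}$. Then $\psi(u,t)=\widetilde{P}_u(\gamma(t))$ from \eqref{parametrization parabolic} is defined on $\mathbb{R}^2$, and I would show it is injective and proper.

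First, on each half of the curve. For $t>0$, $z$ is strictly increasing with $z\to\infty$. For $t<0$, $\tilde z$ is increasing in $-t$, and its limit is again $\infty$. So $z$ attains a unique strict global minimum $z_0$ at $t=0$ and tends to $+\infty$ at both ends. Each branch $\{t>0\}$ and $\{t<0\}$ is therefore a graph over the height interval $(z_0,\infty)$: $x$ is a function of $z$ on each branch. This gives the bigraph description, written as two graphs over the $z$–axis, which is the natural reading of ``winglike''. On each branch, $x\to-\infty$ by Proposition \ref{prop x comportamento} and Proposition \ref{tempo negativo prop}, so the horizontal coordinate is unbounded on each graphical component. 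The height is unbounded, and the limiting angles are $\theta\to$ some value in $(\pi/2,\pi)$ forward and $\tilde\theta\to\pi$ backward. Both wings thus open toward $x\to-\infty$.

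Second, embeddedness of the profile curve, which I expect to be the main obstacle. The two wings could a priori cross at some height $z>z_0$. At such a height the forward point $x_+(z)$ and the backward point $x_-(z)$ would have to coincide. I would try to show $x_+(z)>x_-(z)$ for all $z>z_0$. Near $t=0$ this holds because $\theta(0)=0$, so the forward branch moves to larger $x$ while the backward branch moves to smaller $x$. For larger $z$ I would argue by contradiction. Take the first crossing height $z_1$ and compare the slopes $dx/dz=\cot\theta$ of the two branches, using the monotonicity of $\theta$ (increasing forward, with a single minimum backward) together with the ODE $\theta'=\dot\varphi(z)\cos\theta+\sin\theta$. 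Alternatively, I would try a direct integral identity: integrating $\theta'-z'=\dot\varphi(z)x'$ along each branch gives $\theta-z=\int\dot\varphi(z)\,dx$ plus a constant, and equality of $x$ at equal $z$ should force equality of $\theta$. Uniqueness for the graph ODE would then rule out the crossing, or a tangency-type argument would apply to the two graphs.

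Finally, properness and topology. Since $z\to\infty$ at both ends of the curve, preimages of compact sets are compact in $t$. In $u$, properness holds because parabolic orbits are properly embedded horocycles. Hence $\psi$ is a proper injective immersion of $\mathbb{R}^2$, so the surface is properly embedded with the topology of $\mathbb{R}^2$ and is $\varphi$-minimal by construction. This proves Theorem \ref{teo parabolic com minimo}.
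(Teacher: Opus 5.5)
You follow the paper's route. Its proof of this theorem consists only of gluing the forward analysis (Lemma~\ref{lema 3}, Proposition~\ref{prop x comportamento}) to the backward one (Proposition~\ref{tempo negativo prop}). Your handling of the two graphical branches, the unboundedness, properness, and the reduction of injectivity of $\psi$ to injectivity of $t\mapsto(x(t),z(t))$ is fine.

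You also correctly single out the one point those results do not cover, which the paper's one-line proof also passes over. Both wings go to $x\to-\infty$. The forward wing, after its maximum of $x$, recrosses the line $x=x_0$ and then runs alongside the backward wing. So whether $x_+(z)>x_-(z)$ for all $z>z_0$ is a real question, and your proposal leaves it open: embeddedness is not proved.

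\textbf{Why your two routes do not close it.} Your integral-identity route cannot work. The form $\dot\varphi(z)\,dx$ is not closed, since $d(\dot\varphi(z)\,dx)=\ddot\varphi(z)\,dz\wedge dx$. Hence the line integrals along the two arcs from $(x_0,z_0)$ to a common point differ by $\pm\iint_D\ddot\varphi\,dx\,dz\neq0$, where $D$ is the region they enclose. Equal endpoints therefore do not force equal angles. Your first-crossing route is only a direction. Monotonicity of $\theta$ in $t$ along each branch says nothing about how the angles of the two branches compare at the same height, and that comparison is exactly what the slope argument at $z_1$ needs.

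\textbf{The missing idea.} The system \eqref{ode parabolic 2} does not involve $x$, and $z$ is strictly increasing on each branch. So the angle, as a function of height, obeys a closed scalar ODE. Orient both branches upward (use $\tilde\theta$ on the backward one) and write them as $x=X_\pm(z)$, $\theta=\Theta_\pm(z)$ for $z>z_0$. Then
\[
\frac{dX_\pm}{dz}=\cot\Theta_\pm,\qquad \frac{d\Theta_\pm}{dz}=1+\dot\varphi(z)\cot\Theta_\pm ,
\]
with $\Theta_\pm\in(0,\pi)$, and as $z\to z_0^+$ we have $X_\pm\to x_0$, $\Theta_+\to0$, $\Theta_-\to\pi$.

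The right-hand side is smooth on $(a,\infty)\times(0,\pi)$. If $\Theta_+$ and $\Theta_-$ met at some $z_1>z_0$, uniqueness would make them coincide on all of $(z_0,\infty)$, contradicting their different limits at $z_0$. Hence $\Theta_+<\Theta_-$ on $(z_0,\infty)$. Since $\cot$ is decreasing on $(0,\pi)$, we get $(X_+-X_-)'=\cot\Theta_+-\cot\Theta_->0$. Therefore $X_+(z)>X_-(z)$ for every $z>z_0$, and the profile curve is embedded. With your reduction, $\psi$ is then a proper embedding of $\mathbb{R}^2$.

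\textbf{Two minor corrections.}
\begin{itemize}
\item By (H2), the forward angle tends to $\pi$, not to a value in $(\pi/2,\pi)$. Indeed $\theta'\to\sin\theta_L$ must vanish, so both wings are asymptotically horizontal.
\item $x_0$ is normalized by hyperbolic translations along the geodesic ending at the fixed point of the parabolic group, not by parabolic translations.
\end{itemize}
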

\begin{proof}
Combining Lemma \ref{lema 3}, Proposition \ref{prop x comportamento}, and \ref{tempo negativo prop}, the result of the theorem follows.
\end{proof}

\begin{figure}[ht]
  \centering
  \begin{minipage}[b]{0.45\textwidth}
    \includegraphics[scale=0.75]{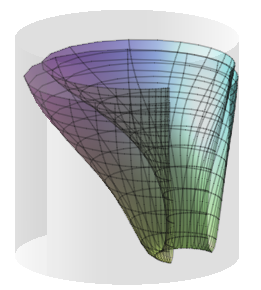}
    
  \end{minipage}
  \hfill
  \begin{minipage}[b]{0.45\textwidth}
    \includegraphics[scale=0.5]{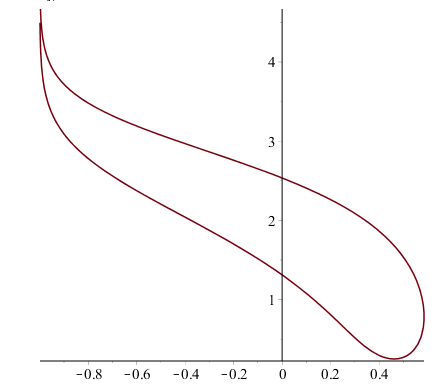}
    
  \end{minipage}

  \caption{$\varphi$-minimal winglike for $\varphi(z) = \ln(z)$}
\end{figure}

Observe that in the Parabolic case, when $\theta(0) = 0$, there not exist a $\varphi$- minimal surface with proprieties similar to a Bowl. However,  when $\varphi-$minimal surface is a translating soliton, that is, when $\varphi(z) = z$, we have there exists a surface foliated by horocycles with similar proprieties to Rotational Bowl. That surface is described by Lima and Pipoli in \cite{lima-pipoli}. In our case, we have that the functions
    \begin{equation*}
        x(t) = \frac{\sqrt{2}}{2}t+x_0, \quad z(t) = - \frac{\sqrt{2}}{2}t + z_0, \quad \text{and} \quad \theta(t) = -\frac{\pi}{4},
    \end{equation*}
    give us this surface. 

\section{Rotational Case} \label{rotational section}

%The study of rotationally invariant surfaces is a classical topic in differential geometry and has been studied by many authors in various settings, including translating solitons, singular minimal surfaces, and weighted minimal surfaces.  Relevant contributions include the classification of rotational translating solitons in $\mathbb{R}^3$ and in $\mathbb{H}^2\times\mathbb{R}$, see \cite{gama2020translating} and  \cite{bueno2018translating}, as well, the study of rotational $\varphi$-minimal surface in Euclidean spaces in \cite{martinez2022equilibrium}.

As we shall see, the rotational case is a bit different from the others since a singularity appears in the equation that describes the surface. For this reason, we will begin our analysis assuming that the rotational, $\varphi-$minimal surface is a $(f_r, u)-$graph, as defined in \cite{de2020hypersurfaces}, where $f_r$ are the geodesic spheres. From Equation (23) of \cite{de2020hypersurfaces}, the mean curvature is given by:
$$ H = \coth(r) \rho + \rho ', $$
with $$\rho = \frac{u'}{\sqrt{1 + (u')^2}}.$$
Then, the function $u$ is the solution of the second order differential equation
\begin{equation}\label{edo 1}
    u'' = \left( \dot{\varphi}(u) - \coth(r) u'\left)\right(1+(u')^2\right).
\end{equation}

By the Existence and Uniqueness for ODEs, this equation with the initial condition
$u(r_0) = u_0 \in (a,\infty)$ and $u'(r_0) = v_0$ has a unique solution. However, equation (\ref{edo 1}) is not well defined for $r = 0$. We will prove that this type of singularity can be removed using ideas similar to those in \cite{martinez2022equilibrium}.

Initially, observe that
\begin{align*}
    \left(\frac{u'\sinh(r)}{\sqrt{ 1 + (u')^2}}\right)' & = \left[(u''\sinh(r) + \cosh(r) u') \sqrt{1+(u')^2} - u'\sinh(r)\left(\frac{u'u''}{\sqrt{1+(u')^2}} \right) \right]\frac{1}{1+(u')^2}\\
     & = \frac{u'' \sinh (r) + u'\cosh (r) }{\sqrt{1+(u')^2}} - \frac{(u')^2u'' \sinh(r)}{\left(1+(u')^2\right)^{3/2}} \\
     & = \frac{u''\sinh(r) }{\left(1+(u')^2\right)^{3/2}}+\frac{u'\cosh(r)}{\sqrt{1+(u')^2}}. 
\end{align*}
Suppose $r> 0$, then multiply the equation (\ref{edo 1}) by $\frac{\sinh(r)}{\sqrt{1+(u')^2}}$ to obtain
\begin{align*}
   \frac{u''\sinh(r)}{\left(1+(u')^2\right)^{3/2}} = \frac{\dot{\varphi}(u)\sinh(r)}{\sqrt{1+(u')^2}} - \frac{u'}{\sqrt{1+(u')^2}},
\end{align*}
which implies that 
\begin{equation}\label{edo 2}
    \left(\frac{u'\sinh(r)}{\sqrt{1+(u')^2}}\right)' = \frac{\dot{\varphi}(u)\sinh(r)}{\sqrt{1+(u')^2}}.
\end{equation}

\begin{lemma}\label{y0=0} {Let $u=u(r)$, $r\in]0,\delta[$ be}  a solution of \eqref{edo 2} {and suppose that there exist $\lim_{r\to 0}u(r) = u(0)\in ]a,\infty[$ and } $\lim_{r\to 0} u'(r)=L_0\in[-\infty,\infty]$. Then $L_0=0$.
\end{lemma}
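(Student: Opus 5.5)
Integrate \eqref{edo 2} from $r$ to $s$ and let $r\to 0$. Set
$$F(r)=\frac{u'(r)\sinh(r)}{\sqrt{1+u'(r)^2}}, \qquad G(r)=\frac{\dot{\varphi}(u(r))\sinh(r)}{\sqrt{1+u'(r)^2}},$$
so that $F'=G$ on $]0,\delta[$.

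\textbf{Step 1: the boundary term vanishes.} Since $|u'/\sqrt{1+u'^2}|\le 1$, we have $|F(r)|\le\sinh r\to 0$. This holds whether or not $L_0$ is finite, which is exactly why the divergence form \eqref{edo 2} is used.

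\textbf{Step 2: $G$ is integrable near $0$.} Because $u(r)\to u(0)\in\,]a,\infty[$, and $\dot\varphi$ is continuous on $]a,\infty[$, the quantity $\dot\varphi(u(r))$ stays bounded near $r=0$. Hence $|G(r)|\le C\sinh r$, and $G$ is integrable on $]0,s]$.

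\textbf{Step 3: the integral identity.} Integrating and letting $r\to0$ gives, for small $s>0$,
$$\frac{u'(s)\sinh(s)}{\sqrt{1+u'(s)^2}}=\int_0^s \frac{\dot{\varphi}(u(r))\sinh(r)}{\sqrt{1+u'(r)^2}}\,dr .$$
The right-hand side is bounded in absolute value by $C\int_0^s\sinh r\,dr = C(\cosh s-1)=O(s^2)$.

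\textbf{Step 4: conclude $L_0=0$.} Dividing by $\sinh s\sim s$ yields
$$\frac{u'(s)}{\sqrt{1+u'(s)^2}}=O(s)\longrightarrow 0 \quad (s\to 0).$$
If $L_0=\pm\infty$, the left side tends to $\pm1$, a contradiction. If $L_0$ is finite, the left side tends to $L_0/\sqrt{1+L_0^2}$, which must equal $0$, so $L_0=0$.

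The main obstacle is Step 3, where we must pass to the limit $r\to0$ without any a priori control on $u'$. Steps 1 and 2 handle this: the bound on $F$ uses only $|u'/\sqrt{1+u'^2}|\le1$, and the bound on $G$ uses only $1/\sqrt{1+u'^2}\le1$ together with the boundedness of $\dot\varphi(u)$. The latter relies on the hypothesis that the limit $u(0)$ lies strictly inside $]a,\infty[$, away from the singular endpoint $a$ where $\dot\varphi$ blows up by Remark \ref{implicações hipotese 3}.
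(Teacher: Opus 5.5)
Your proposal is correct and follows essentially the same route as the paper. Both arguments integrate the divergence form \eqref{edo 2}, use $|u'/\sqrt{1+u'^2}|\le 1$ to kill the boundary term at $0$, and then divide by $\sinh s$. The only difference is that you bound the resulting quotient directly by $C\tanh(s/2)$ instead of invoking L'H\^{o}pital's rule, which also shows $u'\to 0$ without assuming a priori that $L_0$ exists.
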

\begin{proof}
Taking $\epsilon$, $0<\epsilon<\delta$, we have 
$$ \frac{u'(r)\sinh(r)}{\sqrt{1+(u'(r))^2}} -  \frac{u'(\epsilon)\sinh(\epsilon)}{\sqrt{1+(u'(\epsilon))^2}} = \int_\epsilon^r\frac{\dot{\varphi}(u(s))\sinh(s)}{\sqrt{1+(u'(s))^2}} ds.$$
Thus, letting $\epsilon\to 0$, 
$$ \frac{u'(r)}{\sqrt{1+(u'(r))^2}} =\frac{ 1}{\sinh(r)}\int_0^r\frac{\dot{\varphi}(u(s))\sinh(s)}{\sqrt{1+(u'(s))^2}} ds,$$
and by the L'H\^{o}pital rule,
$$  \lim_{r\to 0} \frac{u'(r)}{\sqrt{1+(u'(r))^2}} = \lim_{r\to 0}\frac{\dot{\varphi}(u(r))}{\sqrt{1+u'(r)^2}} \lim_{r\to 0}\tanh r=0,$$
which concludes the proof.
\end{proof}
By Theorem 2 of \cite{serrin-removesingular}, the solution of (\ref{edo 2}) has no non-removable, isolated singularities. Therefore, from Lemma \ref{y0=0}, we are able to investigate the solutions of (\ref{edo 2}) with the initial condition:
\begin{equation}\label{cond inicial}
    u(0) = u_0 \in (a,\infty),  \quad u'(0) = 0.
\end{equation}

Initially, define the functions $g: \mathbb{R} \to (-1, 1) \subset \mathbb{R}$ and $f:(a, \infty)\times \mathbb{R}\to \mathbb{R}$ such that:
   \begin{equation*}
       f(x,y) = \frac{\dot{\varphi}(x)}{\sqrt{1+y^2}} \quad\text{and}\quad g(y) = \frac{y}{\sqrt{1+y^2}}.
   \end{equation*}

\begin{prop}
There exists $\delta > 0$ such that the operator  $ J: C^1[0, \delta] \to C^1[0, \delta]$, given as
\begin{equation*}
    Ju(s) = \int_0^s\frac{\sinh t}{\sinh s}f(u,u') dt,
\end{equation*}
satisfy $|Ju(s)| < 1$, for all $u \in C^1[0, \delta]$.
\end{prop}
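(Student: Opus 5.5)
The plan is to bound the integrand uniformly and then use that the kernel $\sinh t/\sinh s$ has total mass of order $s$ on $[0,s]$. As literally stated, the claim cannot hold for \emph{every} $u\in C^1[0,\delta]$. By Remark \ref{implicações hipotese 3}, $\dot{\varphi}(z)\to\infty$ as $z\to a^+$, so $f(u,u')$ is unbounded, and even undefined if $u$ leaves $(a,\infty)$. So I would read the statement, as in the fixed-point argument it prepares, with $u$ ranging over the closed set
$$X_\varepsilon=\{u\in C^1[0,\delta]\,:\, |u(s)-u_0|\le \varepsilon \ \text{for all } s\in[0,\delta]\},$$
where $u_0$ is the initial value in \eqref{cond inicial} and $0<\varepsilon<u_0-a$ is fixed. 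The content of the proposition is then that $\delta$ can be chosen depending only on $u_0$, $\varepsilon$ and $\varphi$, and not on $u$.

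First step: a uniform bound on $f$. For $u\in X_\varepsilon$ we have $u(t)\ge u_0-\varepsilon>a$. By (H1), $\dot{\varphi}>0$ and $\dot{\varphi}$ is decreasing since $\ddot{\varphi}<0$. Hence
$$0<f(u(t),u'(t))=\frac{\dot{\varphi}(u(t))}{\sqrt{1+u'(t)^2}}\le \dot{\varphi}(u(t))\le \dot{\varphi}(u_0-\varepsilon)=:M,$$
and this bound does not depend on $u'$ because $\sqrt{1+y^2}\ge 1$.

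Second step: integrate the kernel. For $s>0$,
$$|Ju(s)|\le \frac{M}{\sinh s}\int_0^s\sinh t\,dt=M\,\frac{\cosh s-1}{\sinh s}=M\tanh\frac{s}{2}\le M\tanh\frac{\delta}{2}.$$
At $s=0$ we set $Ju(0)=0$. This is consistent, since the same estimate gives $Ju(s)\to0$ as $s\to0^+$; it also matches Lemma \ref{y0=0}, where $g(u')=u'/\sqrt{1+u'^2}=Ju$ tends to $0$. Choosing $\delta>0$ with $M\tanh(\delta/2)<1$, that is, $\delta<2\,\mathrm{artanh}(1/M)$ (any $\delta$ works if $M\le 1$), yields $|Ju(s)|<1$ on $[0,\delta]$ for all $u\in X_\varepsilon$.

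The only delicate point is the first step: the hypotheses give no global bound on $\dot{\varphi}$, so the estimate has to come from confining $u$ away from $a$ and using the monotonicity of $\dot{\varphi}$. The payoff of $|Ju|<1$ is that $g^{-1}(y)=y/\sqrt{1-y^2}$ can be applied to $Ju$, so the integral form of \eqref{edo 2} can be rewritten as $u'=g^{-1}(Ju)$ for the subsequent contraction argument. I would also note, for that later use, that $Ju$ is continuous on $[0,\delta]$ and differentiable for $s>0$, by the fundamental theorem of calculus applied to the continuous integrand.
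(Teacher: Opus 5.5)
Your proof is correct and follows the paper's argument: bound the integrand $f(u,u')$ by a constant $M$, integrate the kernel to get $|Ju(s)|\le M\tanh(s/2)$, and take $\delta$ small. The difference is an improvement. As you note, the claim for literally all $u\in C^1[0,\delta]$ is false, and the paper takes $M$ from the continuity of $\dot{\varphi}(u)$ for a given $u$, so its $\delta$ depends on $u$ (its final condition $\frac{\delta}{2}M<\delta$ should read $\frac{\delta}{2}M<1$); your restriction to $|u-u_0|\le\varepsilon$ together with the monotonicity of $\dot{\varphi}$ gives an $M$, and hence a $\delta$, uniform in $u$, which is what the later contraction argument actually needs.
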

\begin{proof}
    Note that $\dot{\varphi}(u)$ is a continuous function in a compact interval $[0, \delta]$. Then, there exists $M > 0$ such that
$$\frac{\dot{\varphi}(u)}{\sqrt{1 + u^{'^2}}} < M.$$
Thus, 
$$ \left|\int_0^s \frac{\sinh t}{\sinh s} \frac{\dot{\varphi}(u)}{\sqrt{1 + u^{'^2}}} \right| \leq \int_0^s \left| \frac{\sinh t}{\sinh s} M \right| \leq \tanh\left(\frac{s}{2}\right) M < \frac{\delta}{2}M.$$

Choose $\delta$ sufficiently small so that $\frac{\delta}{2}M < \delta $ and the proposition holds.
\end{proof}

Define the operator $S: C^1[0, \delta] \to C^1[0, \delta]$ by
   \begin{equation*}
       (Su)(r) = \int^r_0 g^{-1}\left(\int^s_0 \frac{\sinh t}{\sinh s} f(u,u') dt \right)ds +u_0.    \end{equation*}
The last proposition verifies that $S$ is well defined. Analogously to the proof of Proposition 2 in \cite{lopez2018invariant}, we obtain the following proposition.

\begin{prop} \label{prop existencia}
  The problem (\ref{edo 2})-(\ref{cond inicial}) has a unique solution $u \in C^2([0,\delta])$, for some $\delta > 0$, which depends continuously on the initial condition and
    $$u''(0) = \frac{\dot{\varphi}(u_0)}{2}.$$
\end{prop}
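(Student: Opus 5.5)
We prove Proposition \ref{prop existencia} by a fixed point argument for the operator $S$, following the scheme of Proposition 2 in \cite{lopez2018invariant}. First we check that a function $u\in C^2([0,\delta])$ with $u(0)=u_0$, $u'(0)=0$ solves \eqref{edo 2} on $(0,\delta]$ if and only if $u=Su$. Integrating \eqref{edo 2} from $0$ to $r$, as in the proof of Lemma \ref{y0=0}, gives $g(u'(r))=(Ju)(r)$. Applying $g^{-1}(y)=y/\sqrt{1-y^2}$, which is legitimate because $|Ju|<1$ by the previous proposition, and integrating once more gives $u=Su$. Conversely, a fixed point of $S$ is $C^1$, and $u'=g^{-1}(Ju)$ with $Ju\in C^1((0,\delta])$. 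Differentiating $\sinh(r)\,g(u')=\int_0^r \sinh t\, f(u,u')\,dt$ recovers \eqref{edo 2}, and \eqref{cond inicial} holds since $(Ju)(0)=0$.

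Next we restrict $S$ to a closed set. Fix $\epsilon>0$ with $[u_0-\epsilon,u_0+\epsilon]\subset(a,\infty)$, and let
$$\mathcal{B}=\{u\in C^1[0,\delta]:\ \|u-u_0\|_{C^1}\le\epsilon\}.$$
On this set $\dot\varphi$ and $\ddot\varphi$ are bounded, say by $M$ and $M'$, so $f$ is bounded and Lipschitz in both variables, with constants depending only on $u_0$ and $\epsilon$. The previous proposition gives $|Ju|\le M\tanh(s/2)\le M\delta/2$, so for $\delta$ small $|Ju|\le 1/2$. On $[-1/2,1/2]$ the map $g^{-1}$ is smooth with bounded derivative. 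This yields $|(Su)'|\le CM\delta$ and $|Su-u_0|\le CM\delta^2$, so $S(\mathcal{B})\subset\mathcal{B}$ for $\delta$ small.

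For the contraction estimate, using $\int_0^s \frac{\sinh t}{\sinh s}\,dt\le s/2$, we get
$$|(Ju-Jv)(s)|\le \tfrac{\delta}{2}\,\mathrm{Lip}(f)\,\|u-v\|_{C^1},$$
and hence $\|Su-Sv\|_{C^1}\le C\delta\|u-v\|_{C^1}$. Shrinking $\delta$ makes $S$ a contraction, and Banach's fixed point theorem gives existence and uniqueness in $\mathcal{B}$. Uniqueness among all $C^2$ solutions follows because any such solution lies in $\mathcal{B}$ on a possibly smaller interval. Continuous dependence on $u_0$ follows from the uniform contraction principle, since $S$ depends continuously on the parameter $u_0$ and the contraction constant is uniform for $u_0$ in a compact subinterval.

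The main obstacle is regularity at $r=0$, namely showing $u\in C^2$ there and computing $u''(0)$. We have $u'=g^{-1}(Ju)$, so it suffices to show $(Ju)(s)/s\to \dot\varphi(u_0)/2$. Set $F(t)=f(u(t),u'(t))$, which is continuous with $F(0)=\dot\varphi(u_0)$ because $u'(0)=0$. By L'H\^opital, $\frac{1}{s\sinh s}\int_0^s\sinh t\,F(t)\,dt\to F(0)/2$. Since $(g^{-1})'(0)=1$, the difference quotient $u'(s)/s$ converges to $\dot\varphi(u_0)/2$, so $u''(0)$ exists and equals $\dot\varphi(u_0)/2$. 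For continuity of $u''$ at $0$, we use \eqref{edo 2} written as $u''=(1+u'^2)\big(\dot\varphi(u)-\coth(r)\,u'\big)$. Since $\coth(r)\,u'(r)\to u''(0)$, we get $u''(r)\to \dot\varphi(u_0)-\dot\varphi(u_0)/2=\dot\varphi(u_0)/2$, which proves $u\in C^2([0,\delta])$.
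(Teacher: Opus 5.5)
Your proposal is correct and follows essentially the same route as the paper. Both realize the solution as the Banach fixed point of the operator $S$ (in the style of L\'opez's Proposition 2) and then apply L'H\^opital's rule at $r=0$ to get $u''(0)=\dot{\varphi}(u_0)/2$ and the continuity of $u''$. Your version is in fact tighter: you restrict $S$ to a closed $C^1$-ball where the bounds on $\dot{\varphi}(u)$ are uniform, and you compute $u''(0)$ directly from $(Ju)(s)/s$ before invoking the equation, whereas the paper applies L'H\^opital to $u'\coth r$ tacitly assuming that $\lim_{r\to 0}u''(r)$ exists.
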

\begin{proof} 
    Note that $u$ is a solution of (\ref{edo 1})-(\ref{cond inicial}) only if $u$ is a fixed point of $S$. 

    Observe also that there exists an $\varepsilon > 0$ such that the functions $g^{-1}$  and $f$ are Lipschitz. Then
    \begin{align*}
        \left|(su)(r) - (Sv)(r)  \right| & c_1\leq \int_0^r\int^s_0\left|\frac{\sinh(t)}{\sinh(r)}\right|\left|f(u,u') - f(v,v')\right|dsdt\\
        &\leq c_2\int_0^r\int_0^s \left(\parallel u-v\parallel_\infty + \parallel u'-v'\parallel_\infty\right) \frac{\sinh(t)}{\sinh(r)} \\
        &= c_2\left( 2\ln \cosh (r/2)\right) \left(\parallel u-v\parallel_\infty + \parallel u'-v'\parallel_\infty\right).
    \end{align*}
    Similarly, we have
    \begin{equation*}
        \left| (Su)'(r) - (Sv)'(r) \right| \leq c_3   \left(\parallel u-v\parallel_\infty + \parallel u'-v'\parallel_\infty\right) \frac{\cosh(s) -1}{\sinh (s)},
    \end{equation*}
    where $c_i$ are positive constants. 
    Thus, $S$ is a contraction and, by the Banach Fixed Point Theorem, there exists a solution of \eqref{edo 1} near $ r = 0$. Moreover,
    \begin{align*}
        \lim_{r\to 0} u''(r) = \dot{\varphi}(u_0) - \lim_{r\to 0}u'\coth r.
    \end{align*}
    By L'Hôpital's rules,
    \begin{align*}
        \lim_{r \to 0} u'(r) \coth r & =  \frac{ \lim_{r \to 0}(u' \sinh r + u'' \cosh r)}{\lim_{r \to 0} \cosh r}\\
        & = \lim_{r \to 0} u''.
    \end{align*}
    Hence, 
    $$\lim_{r \to 0}u''(r) = \frac{\dot{\varphi}(u_0)}{2}.$$
    On the other hand, 
    \begin{align*}
        u''(0) & = \lim_{t \to 0^+} \frac{u'(t)}{t} \\
        & = \lim_{t \to 0} \dot{\varphi}(u) \frac{\tanh t}{t} - \lim_{t \to 0}\frac{u''\tanh t}{t\left(1+u'^2\right)}\\
        & = \frac{\dot{\varphi}(u_0)}{2},
    \end{align*}
    which implies that $u \in C^2\left([0,\delta]\right)$, for some $\delta > 0$ small. 
\end{proof}

The vertical graph $u$ can be written as a rotationally symmetric surface, parametrized as in \eqref{parametrizacao rot}. Moreover, its properties are determined by the behaviour of the functions $x$ and $z$, solutions of \eqref{prob 1},
\begin{equation*}
    \left\{\begin{array}{rl}
        x' = & \cos \theta  \\
        z'  = &\sin\theta \\
        \theta'  = & \dot{\varphi}(z)\cos\theta - \sin\theta\coth x.
    \end{array}\right.
\end{equation*}

Furthermore, by Proposition \ref{prop existencia}, the graph touches the rotation axis at height $u(0) = u_0$. Then,
\begin{align} \label{initial cond 2}
x(0) = x_0 \geq 0 & & z(0) = u_0 & & \theta(0) = 0.
\end{align}
Let us study the solutions of the problem (\ref{prob 1}) with initial conditions (\ref{initial cond 2}). 

{\begin{lemma} \label{lemma 1 rot}
    Consider $x$, $z$ and $\theta$ solutions of (\ref{prob 1})-(\ref{initial cond 2}). Then, they are defined for any positive time, $x$ and $z$ are increasing functions in $(0,\infty).$ Moreover
    \begin{equation*}
              \lim_{t \to \infty} x = \infty, \qquad  \lim_{t \to \infty} z = \infty, \qquad \lim_{t \to \infty}\theta =0.
    \end{equation*}
\end{lemma}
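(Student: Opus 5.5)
We treat the system \eqref{prob 1} with initial data \eqref{initial cond 2} and prove the properties in this order: monotonicity of $z$ and $x$, global existence, the bound $0<\theta<\pi/2$, the limit of $\theta$, and finally the two unbounded limits.

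\emph{Monotonicity.} Lemma \ref{crescimento de z} applies with $\theta_0=0$ and $x_0\ge 0$. It gives $z'>0$ and, in the rotational case, $x'>0$ on $(0,t_{\max})$. When $x_0=0$ we read the solution through the regularized graph of Proposition \ref{prop existencia}: $u'(r)>0$ for $r>0$, so near $t=0$ we have $x>0$, and the lemma applies from a small positive time onward. Thus $0<\theta<\pi/2$ on $(0,t_{\max})$. Moreover $x$ is increasing, so $\liminf_{t\to t_{\max}}|x|\ge x(t_1)>0$ for any fixed $t_1>0$. Proposition \ref{maximal interval} then gives $t_{\max}=\infty$.

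\emph{Limit of $\theta$.} The key estimate is
\[
\theta'=\dot\varphi(z)\cos\theta-\sin\theta\coth x\le \dot\varphi(z)-\sin\theta .
\]
First suppose $z$ is bounded, so $z\to z_L<\infty$. Then $\int_0^\infty\sin\theta\,dt<\infty$. Since $x'=\cos\theta\ge\cos(\pi/2)$ stays away from degenerate behaviour, we get $x\to\infty$, hence $\coth x\to 1$. For large $t$ the equation reads $\theta'\approx\dot\varphi(z_L)\cos\theta-\sin\theta$. Its only equilibrium in $(0,\pi/2)$ satisfies $\tan\theta=\dot\varphi(z_L)>0$, and near $\theta=0$ we have $\theta'>0$. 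So $\theta$ is eventually bounded below by a positive constant, which contradicts $\int\sin\theta<\infty$. Hence $z\to\infty$, and by (H2) $\dot\varphi(z)\to 0$.

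\emph{Main step.} The main obstacle is to show $\theta\to 0$. Since $z\to\infty$, there is a sequence with $\sin\theta(t_n)\to0$. I would argue that $\theta$ is eventually monotone by examining the critical points of $\theta$. At a critical point $\theta'=0$, and
\[
\theta''=\ddot\varphi(z)\sin\theta\cos\theta+\frac{\sin\theta\cos\theta}{\sinh^2 x}.
\]
The sign of this expression is not immediately clear. A comparison argument is more robust. Given $\epsilon>0$, choose $T$ with $\dot\varphi(z(t))<\epsilon$ and $\coth x\ge1$ for $t>T$. Whenever $\sin\theta>\epsilon$ we then have $\theta'\le\epsilon-\sin\theta<0$. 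So $\theta$ decreases until $\sin\theta\le\epsilon$, and once below that level it cannot rise above it again: at a first crossing we would need $\theta'\ge0$, but there $\theta'<0$. Hence $\limsup\sin\theta\le\epsilon$ for every $\epsilon$, so $\theta\to0$.

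\emph{Unbounded $x$.} Since $\theta\to0$, we have $x'=\cos\theta\to1$, and therefore $x\to\infty$. Together with $z\to\infty$ this gives all three limits in the statement. The delicate parts are the case $x_0=0$, where $\coth x$ blows up at $t=0$ and the regularity from Proposition \ref{prop existencia} is what makes the argument go through, and the uniform trapping step for $\theta$.
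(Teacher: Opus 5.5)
Your proof is correct, but it is organized differently from the paper's.

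\emph{The paper's route.} The paper rules out bounded $z$ by integrating $\theta'\ge\dot\varphi(z)\cos\theta-c_1z'$. It then integrates $\int\dot\varphi(z)x'\,dt$ by parts against $\ddot\varphi<0$ to get $x<(c_1z+\theta+c_3)/\dot\varphi(z)$, so $x$ is bounded, contradicting $x'+z'\ge1$. It obtains $x\to\infty$ separately from the integrated bound $\theta'\le\dot\varphi(z_0)x'-z'$. Only at the end does it prove $\theta\to0$, by a dichotomy on the critical points of $\theta$. Either $\theta$ is eventually monotone, so $\theta'\to-\sin\theta_L=0$. Or there are arbitrarily late critical points, where $\tan\theta=\dot\varphi(z)\tanh x\to0$ controls the local maxima.

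\emph{Your route.} You replace all of this with barrier arguments on $\theta$ alone. You use a positive lower barrier when $z$ is bounded, and the forward-invariant region $\{\sin\theta\le\epsilon\}$ once $\dot\varphi(z)<\epsilon$. Then $x\to\infty$ follows for free from $x'=\cos\theta\to1$.

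\emph{Comparison.} Your argument is shorter and avoids both the integration by parts against $\ddot\varphi$ and the case analysis of critical points. Your computation of $\theta''$ correctly shows why that analysis is awkward. The paper's route, in exchange, gives explicit integral comparisons among $x$, $z$ and $\theta$. You also handle the singular start $x_0=0$ more carefully than the paper does.

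\emph{Points to tighten.}
\begin{itemize}
\item In the bounded-$z$ case, $\cos\theta\ge\cos(\pi/2)=0$ does not give $x\to\infty$. You could use $x'+z'=\cos\theta+\sin\theta\ge1$ instead, but it is better to drop this step. Since $x$ is increasing, $\coth x\le\coth x(T)$ for $t\ge T$, and $\dot\varphi(z(t))>\dot\varphi(z_L)$. So the exact inequality $\theta'>\dot\varphi(z_L)\cos\theta-\coth x(T)\sin\theta$ holds, and its right side is nonnegative whenever $\tan\theta\le\dot\varphi(z_L)\tanh x(T)$. This makes your ``$\approx$'' rigorous without needing $\coth x\to1$.
\item In the main step, the opening claim that $z\to\infty$ produces $\sin\theta(t_n)\to0$ is a non sequitur, though you do not use it.
\item The phrase ``$\theta$ decreases until $\sin\theta\le\epsilon$'' needs one line. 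Choose $T$ with $\dot\varphi(z)<\epsilon/2$; then $\theta'<-\epsilon/2$ while $\sin\theta>\epsilon$, so the level is reached in finite time.
\end{itemize}
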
}
\begin{proof}
  Lemma \ref{lemma 1 rot} and Proposition \ref{maximal interval} show us that $x$ and $z$ are increasing functions and that the solution is defined for all positive time. Consequently, we have that $0<\theta<\frac{\pi}{2}$ in $(0,\infty)$ and therefore
   $$
   x' + z' \geq 1,
   $$
   which implies that
   $$
   \lim_{t \to \infty} \left( x(t) + z(t) + c_0 \right) \geq \infty.
   $$
   The inequality above shows us that at least one of the functions $x$ and $z$ is unbounded. Suppose, by contradiction, that $z$ is a bounded function. Since $x$ increases, there exists $t_0>0$ such that
   $$
   \coth x(t) \leq \coth x(t_0), \,\, t>t_0
   $$
   Therefore, since $z'>0$ we have
   $$
   \theta' \geq \dot{\varphi}(z) \cos \theta - c_1 z',
   $$
   where $c_1 = \coth x(t_0)>0.$ Then, considering the integral
   $$
   \displaystyle \int_{t_0}^t \dot{\varphi}(z) \cos \theta ds =  x \dot{\varphi}(z) - \int_{t_0}^t \left( x \sin \theta \ddot{\varphi}(z) \right) ds + c_2 > \dot{\varphi}(z) + c_2,
   $$
   we deduce that
   $$
   x< \dfrac{c_1 z + \theta + c_3}{\dot{\varphi}(z)},
   $$
   which implies that $x$ is bounded, a contradiction. Therefore, $z$ is unbounded. On the other hand, using the inequalities $\coth x \geq 1$ and $\dot{\varphi}(z_0) \leq \dot{\varphi}(z)$, we have that
   $$
   \theta' \leq \dot{\varphi}(z_0) x' - z'.
   $$
   Consequently,
   $$
   x \geq \dfrac{\theta + z - z_0}{\dot{\varphi}(z_0)},
   $$
   which implies that $\lim_{t \to \infty} x(t) = \infty$. Concerning $\theta$, suppose first that the set of critical points of $\theta$ is contained in $(0,t_c)$. Therefore, $\theta$ is monotone for $t>t_c$ and, since it is bounded, there exists the limit $\lim_{t \to \infty} \theta(t) = \theta_L$. The behaviours of $x$ and $z$ imply that the limit $\lim_{t \to \infty} \theta'(t)$ also exists. Since both limits $\lim_{t \to \infty}\theta(t)$ and $\lim_{t \to \infty}\theta'(t)$ exist, we have $\lim_{t \to \infty}\theta'(t)=-\sin \theta_L=0$, and then, $\theta_L=0$. Suppose now the set of critical points of $\theta$ is unbounded. Take any sequence of critical points $\left\{ t_n \right\}$ for $\theta$ such that $\lim_{n \to \infty} t_n = \infty$. Since 
   $$
   0 = \theta'(t_n) = \dot{\varphi}(z) \cos \theta(t_n) - \sin \theta(t_n) \coth x,
   $$
   we conclude that $\lim_{n \to \infty} \theta(t_n) = 0$. Since $\left\{ t_n \right\}$ is arbitrary, we conclude that $\lim_{t_* \to \infty} \theta(t_*) = 0$, for the critical points $t_*$ of $\theta$. If $\theta$ is monotone in an interval $(t_c, \infty)$, the previous argument shows that $\theta_L = 0$. Otherwise, there are local maximum points $t_M$ arbitrarily large. Since each $t_M$ is a critical point, we have $\lim_{t_M \to \infty}\theta(t_M) = 0$,  therefore $\lim_{t \to \infty}\theta(t) = 0$.
\end{proof}
Combining the previous lemma with Proposition \ref{prop existencia}, we prove the existence of a \textit{$\varphi$-minimal rotational Bowl}.

\begin{theorem} \label{teo bowl rot}
There exists a family of properly embedded, rotational $\varphi$-minimal surface in $\mathbb{H}^2\times\mathbb{R}$, which is an entire vertical graph over $\mathbb{H}^2$. The surface touches the vertical axis orthogonally at a single point, reaches a global minimum height at this point, and has unbounded height. 
\end{theorem}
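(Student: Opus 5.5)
The plan is to assemble the surface from two pieces: the local radial graph near the axis given by Proposition \ref{prop existencia}, and the global phase-space description of Lemma \ref{lemma 1 rot}.

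\emph{Step 1 (local piece).} Fix $u_0\in(a,\infty)$. Proposition \ref{prop existencia} gives a unique $C^2$ solution $u$ of \eqref{edo 2}--\eqref{cond inicial} on some $[0,\delta]$, with $u'(0)=0$ and $u''(0)=\dot{\varphi}(u_0)/2>0$. Rotating the graph of $u$ about the vertical axis through $P$ produces a rotational $\varphi$-minimal disk. Since $u'(0)=0$, this disk meets the axis orthogonally at the single point $(P,u_0)$. Because $u\in C^2$ with $u'(0)=0$, the function $u(d(\cdot,P))$ is $C^2$ near $P$, so the surface is regular there. Elliptic regularity for the minimal surface equation in the conformal metric $e^{\varphi}g$ (Theorem \ref{minima espaço conforme}) then upgrades this to smoothness. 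Varying $u_0$ gives the family.

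\emph{Step 2 (continuation).} For $r>0$ the graph of $u$ is the profile curve \eqref{profile curve} with $x=r$, $z=u$, reparametrized by arc length. At a small $r_1\in(0,\delta)$, the data $(x(r_1),z(r_1),\theta(r_1))$ has $x>0$ and $\theta\in(0,\pi/2)$, since $u'>0$ for small $r>0$ because $u''(0)>0$. Continuing with system \eqref{prob 1}, Lemma \ref{crescimento de z} gives $z'>0$ and $x'>0$. Then $\liminf|x|>0$, so Proposition \ref{maximal interval} shows the solution exists for all $t>0$. Lemma \ref{lemma 1 rot}, whose argument applies verbatim from this starting point, yields $x\to\infty$, $z\to\infty$, and $\theta\in(0,\pi/2)$. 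Hence $\cos\theta>0$, so $x$ is a diffeomorphism onto $(0,\infty)$ and the profile is a graph $z=u(x)$ over all $r\ge 0$. This means the surface is an entire vertical graph over $\mathbb{H}^2$: every geodesic circle around $P$ is hit exactly once.

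\emph{Step 3 (remaining properties).} A global graph over $\mathbb{H}^2$ is embedded, and it has the topology of $\mathbb{R}^2$. Since $u$ is continuous on $[0,\infty)$ with $u(r)\to\infty$, preimages of compact sets are compact, so the surface is proper. Because $u'>0$ for $r>0$, the height attains its global minimum exactly at the axis point, with value $u_0$. Unbounded height is the limit $z\to\infty$ from Step 2.

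The main obstacle I expect is the gluing at $r=0$: showing that the fixed-point solution of Proposition \ref{prop existencia} coincides with the profile-curve solution for small $r>0$, and that the surface is smooth at the vertex. The first point should follow from uniqueness for \eqref{edo 1} at $r_1>0$. For smoothness, I would first rely on the removable-singularity result of Serrin cited above, falling back on elliptic regularity if needed.
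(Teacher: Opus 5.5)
Your proposal is correct and follows the same route as the paper: local existence at the axis from Proposition \ref{prop existencia}, then the monotonicity and limits of Lemma \ref{lemma 1 rot} to conclude that the surface is an entire vertical graph with a minimum at the axis point and unbounded height. The differences are refinements rather than a different argument. You restart system \eqref{prob 1} at some $r_1>0$, which avoids the $\coth x$ singularity that the paper passes over by applying Lemma \ref{lemma 1 rot} directly with $x(0)=0$. You also rotate the graph instead of invoking the reflection symmetry $(x,z,\theta)(t)\mapsto(-x,z,-\theta)(-t)$, and you check regularity at the vertex explicitly.
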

\begin{proof}
 The existence of functions $x$ and $z$ satisfying (\ref{prob 1}), with the initial conditions
 \begin{equation}\label{cond inicial bowl}
     x(0) = 0, \quad z(0) = z_0, \quad \theta(0) = 0,
 \end{equation}
is guaranteed by the Proposition \ref{prop existencia}. Furthermore, by the regularity of the graph, $x$, $z$ and $\theta$ are at least functions $C^1$. Note that $\Tilde{x}(t) = -x(-t)$, $\Tilde{z}(t) = z(-t)$ and $\Tilde{\theta}(t) = -\theta(-t)$ are also solutions of (\ref{prob 1})-(\ref{initial cond 2}). Therefore, the curve $\lambda$ is symmetric about the vertical axis and we will only consider the case where $t\geq 0$.  By Lemma \ref{lemma 1 rot}, we conclude that this surface is a vertical graph with unbounded height. 
\end{proof}

\begin{figure}[ht]
    \centering    \begin{subfigure}[t]{0.50\textwidth}
        \centering
        \includegraphics[width=\linewidth]{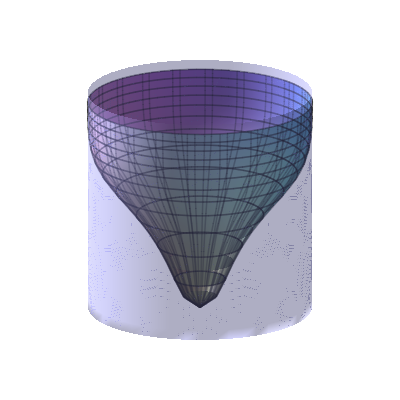}
        \caption{Singular rotational Bowl}
    \end{subfigure}
    \hfill
    \begin{subfigure}[t]{0.40\textwidth}
        \centering
        \includegraphics[width=\linewidth]{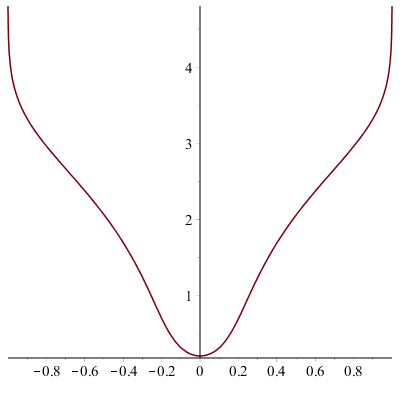}
        \caption{Profile curve}
    \end{subfigure}
     \caption{Singular rotational Bowl and its profile curve.}
\end{figure}

By the initial condition, we have that 
\begin{equation*}
    \lim_{t \to 0^+} \sinh x(t) =  0 \quad \text{and}\quad \lim_{t \to 0^+} \sin \theta(t)\cosh x(t) = 0. 
\end{equation*}

Note that in describing the $\varphi-$minimal Bowl, our analysis depended on the condition that the solution of (\ref{prob 1})-(\ref{initial cond 2}) is symmetric about the vertical axis. This symmetry occurs only when {$x(0) = 0$}. Then, we are interested in investigating the solutions of (\ref{prob 1}) under the initial condition $x(0) = x_0 \neq 0$. As we can see in \cite{bueno2018translating} and \cite{martin2015topology}, in a similar case, we obtain a Translating Catenoid. 

\subsection{Non-symmetric case}We want to describe the solution of (\ref{prob 1}) with the initial condition
\begin{align} \label{cond inicial nao convexo}
    x(0) = x_0 > 0, & &z(0) = z_0 \in (a, \infty), & &\theta(0) = 0,
\end{align}
in a negative time. In this case, observe that the solution is not symmetric about the vertical axis. 
Without loss of generality, we may restrict our analysis to $x_0> 0$,  as the case 
$x_0< 0$  (with $\theta(0) = 0$) exhibits identical behaviour up to reflection.

Then, we need to analyse what happens with the solution for positive and negative time. For positive time, as we proved in the previous section, the solution is a vertical, convex graph. However, for a negative time,  we define the functions $\tilde{x}(t) = x(-t)$, $\tilde{z}(t) = z(-t)$ and $\tilde{\theta}(t) = \theta(-t) + \pi$, which are solutions of (\ref{prob 1}) with the initial condition
\begin{equation*}
    \tilde{x}(0) = x_0 > 0, \quad \tilde{z}(0) = z_0 \quad \text{and} \quad \tilde{\theta}(0) = \pi.
\end{equation*}
Note that $\tilde{z}$ is an increasing function and if  $\tilde{x}$ has a critical point, it must be unique.

Now we proceed as in the parabolic case and we will study what happen in a negative time. Hence, consider $t<0$ and define the functions
\begin{equation*}
    \tilde{x}(t) = x(-t), \quad \tilde{z}(t) = z(-t), \quad  \text{and} \quad \tilde{\theta}(t)= \theta(-t)+\pi,
\end{equation*}
Note that $(\tilde{x}, \tilde{z}, \tilde{\theta})$ are solutions of the ODE system

\begin{equation} \label{ode parabolic negative time}
    \left\{\begin{array}{lll}
         \tilde{x}' & = &\cos \tilde{\theta}  \\
         \tilde{z}' & = &\sin \tilde{\theta} \\
         \tilde{\theta}' & = & \dot{\varphi}(\tilde{z}) \cos\tilde{\theta} - \coth \tilde{x} \sin \tilde{\theta},
    \end{array}\right.
\end{equation}
In the next proposition, we study the solutions of the system above under the following initial conditions:
\begin{equation}
    \tilde{x}(0) = x_0 > 0, \quad \tilde{z}(0) = z_0 \quad \text{and} \quad \tilde{\theta}(0) = \pi. \label{initial-cond-neg-rot}
\end{equation}

\begin{prop}\label{prop:rot-tempo-negativo-prop}
    Let $\tilde{x}$, $\tilde{z}$ and $\tilde{\theta}$ solutions of (\ref{ode parabolic negative time})-(\ref{initial-cond-neg-rot}). Then, the solution is defined for all positive time. Moreover, $\tilde{z}$ is increasing, $\tilde{x}$ has a unique critical point $t_0$, which is a minimum point and the following limits holds 
    \begin{equation*}
        \lim_{t\to \infty}\tilde{x}(t) = \infty, \quad \lim_{t\to \infty}\tilde{z}(t) = \infty, \quad\text{and} \quad 
        \lim_{t\to \infty} \tilde{\theta} = 0.
    \end{equation*}
\end{prop}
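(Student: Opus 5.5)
The plan is to track the angle $\tilde\theta$ and use the structure of \eqref{ode parabolic negative time} in three stages. Stage one is the initial behaviour and the monotonicity of $\tilde z$; stage two is a monotone quantity that keeps $\tilde x$ away from the rotation axis; stage three is the existence of the critical point of $\tilde x$, after which the analysis reduces to Lemma \ref{lemma 1 rot}.

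\emph{Stage one.} At $t=0$ we have $\tilde\theta'(0)=-\dot\varphi(z_0)<0$. Hence $\tilde\theta$ drops below $\pi$, so $\tilde z'=\sin\tilde\theta>0$ and $\tilde x'=\cos\tilde\theta<0$ for small $t>0$. Exactly as in Lemma \ref{crescimento de z}, a first critical point $t_1$ of $\tilde z$ would satisfy $\tilde z''(t_1)=\dot\varphi(\tilde z)\cos^2\tilde\theta>0$. It would therefore be a local minimum, which is impossible for a first critical point of an increasing function. So $\tilde z$ is strictly increasing and $\tilde\theta\in(0,\pi)$ on the whole interval of existence.

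\emph{Stage two.} The key observation is the identity
\[
\bigl(\sin\tilde\theta\,\sinh\tilde x\bigr)'=\dot\varphi(\tilde z)\cos^2\tilde\theta\,\sinh\tilde x ,
\]
obtained directly from the system. Thus $\sin\tilde\theta\sinh\tilde x$ is increasing while $\tilde x>0$. Fix $s>0$ small. For $t\ge s$ we get $\sinh\tilde x(t)\ge \sin\tilde\theta(s)\sinh\tilde x(s)>0$, because $\sin\tilde\theta\le 1$. Hence $\tilde x$ stays bounded away from $0$, so $\coth\tilde x$ is bounded. Proposition \ref{maximal interval} (rotational case) then gives existence for all positive time.

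\emph{Stage three.} Next I show that $\tilde x$ has a critical point $t_0$. Suppose instead that $\cos\tilde\theta<0$ for all $t>0$. Then $\coth\tilde x\ge 1$ gives $\tilde\theta'\le -\coth\tilde x\,\sin\tilde\theta\le -\tilde z'$, so $\tilde\theta\le \pi-(\tilde z-z_0)$. Since $\tilde\theta>\pi/2$, this forces $\tilde z$ to be bounded. On the other hand, $\tilde x$ is decreasing and bounded below by stage two, while arc length gives $-\tilde x'+\tilde z'\ge 1$. Then $\tilde z-\tilde x\ge t+c$ is unbounded, a contradiction. At any critical point we have $\tilde x''=\sin^2\tilde\theta\coth\tilde x>0$, because $\tilde x>0$. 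So every critical point is a strict minimum, and therefore $t_0$ is the unique one.

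For $t>t_0$ both $\tilde x$ and $\tilde z$ are increasing, $\tilde\theta\in(0,\pi/2)$ and $\tilde x>0$. From this point the argument of Lemma \ref{lemma 1 rot} applies verbatim, since it only uses these facts together with (H1), (H2). This yields $\tilde z\to\infty$, $\tilde x\to\infty$ and $\tilde\theta\to 0$.

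I expect the main obstacle to be excluding a collapse of $\tilde x$ to $0$, where the coefficient $\coth\tilde x$ blows up. The monotone quantity $\sin\tilde\theta\sinh\tilde x$ is what I would rely on there. If it fails to give enough control, the fallback is a direct comparison of $d\tilde\theta/d\tilde x$ near $\tilde x=0$.
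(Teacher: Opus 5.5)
Your proof is correct. Its skeleton matches the paper's: $\tilde z$ is increasing as in Lemma \ref{crescimento de z}, a vertical point of the profile is found by contradiction from $\tilde\theta\in(\pi/2,\pi)$, uniqueness comes from $\tilde x''=\sin^2\tilde\theta\coth\tilde x>0$, and the tail is handled by Lemma \ref{lemma 1 rot}. The difference is in how you control the singular coefficient $\coth\tilde x$ near the axis.

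The paper works inside the contradiction hypothesis. There $\tilde x$ and $\tilde\theta$ are monotone and $\tilde z$ is bounded, so all limits exist. It then invokes Lemma \ref{y0=0}, reading the profile as a graph $u(r)$ solving \eqref{edo 2}, to exclude $\tilde x_L=0$, and deduces $t_{\max}=\infty$. The contradiction comes from $\lim\tilde\theta'$ being forced to vanish while equalling $\dot\varphi(\tilde z_L)\cos\tilde\theta_L-\sin\tilde\theta_L\coth\tilde x_L<0$. Global existence in the actual case is obtained only afterwards, from $\tilde x\ge\tilde x(t_0)>0$.

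Your identity $\bigl(\sin\tilde\theta\sinh\tilde x\bigr)'=\dot\varphi(\tilde z)\cos^2\tilde\theta\sinh\tilde x$ is the flux identity \eqref{edo 2} rewritten in arc length. Unlike the graph form, it stays valid through vertical tangents. It gives at once the uniform bound $\sinh\tilde x\ge\sin\tilde\theta(s)\sinh\tilde x(s)$, and hence global existence before the critical point is located. Your Stage three contradiction then needs only the arc-length inequality $\tilde z'-\tilde x'\ge 1$, with no limit arguments or removable-singularity lemma.

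The paper's route reuses the graph-based machinery already set up for the bowl. Yours is more self-contained and controls the distance to the axis on the whole interval, rather than only under the assumption $\tilde\theta>\pi/2$.
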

\begin{proof}
Arguing as in Lemma \ref{crescimento de z}, we show that $\tilde{z}$ is an increasing function in $(0,t_{\textnormal{max}})$, therefore $\tilde{\theta} \in (0, \pi)$ in this interval. Suppose by contradiction that $\tilde{\theta} \in (\pi/2, \pi)$. In this case, $\tilde{x}'<0$ and $\tilde{\theta}'<0$. Consequently $\tilde{x} \in (0,x_0)$ and, since both functions are bounded, there exist the limits 
\begin{equation*}
        \tilde{x}_L = \lim_{t \to {\textnormal{max}}} \tilde{x}(t), \quad \text{and} \quad \tilde{\theta}_L = \lim_{t \to {\textnormal{max}}} \tilde{\theta}(t).
    \end{equation*}
Moreover, since $\coth(\tilde{x})>1$ and $\cos(\tilde{\theta})<0$ we have  $-  {\tilde{\theta}'} > \tilde{z}'$, and the boundedness of ${\tilde{\theta}}$ implies that $\tilde{z}$ is bounded, therefore the limit $\tilde{z}_L = \lim_{t \to \textnormal{max}} \tilde{z}(t)$ exists.

We claim that $\tilde{x}_L > 0$. In fact, by Lemma \ref{y0=0}, if $\tilde{x}_L = 0$ then $\tilde{\theta}_L = \pi$, which is impossible since $\tilde{\theta}(0)=\pi$ and $\tilde{\theta}'<0$. Therefore $\tilde{x}_L > 0$ and it follows by Proposition \ref{maximal interval} that $t_\textnormal{max} = \infty$. 
Then the limit $\lim_{t \to \infty} \tilde{\theta}' (t) = \tilde{\theta}_L^1$ also exists. In this case, we have necessarily $\tilde{\theta}_L^1 = 0$, which is a contradiction since
$$
\tilde{\theta}_L^1 = \dot{\varphi}(\tilde{z}_L) \cos (\tilde{\theta}_L) - \sin (\tilde{\theta}_L) \coth(x_L)<0.
$$
We conclude that there exists $t_0 \in (0, t_{\textnormal{max}})$ such that $\tilde{\theta}(t_0) = \pi/2.$ Consequently, $t_0$ is a critical point for $\tilde{x}$. The same argument applied in Lemma \ref{crescimento de z} shows that this is the only critical point for $\tilde{x}$ and $\tilde{x}$ is an increasing function in $(t_0, t_{\textnormal{max}})$, which shows that $x_->0$ and, therefore, the solution is defined for all time and $\tilde{x}(t_0)$ is a local minimum. We claim $\tilde{x}(t_0)$ is a global minimum. In fact, using again that $\tilde{x}''(t_*)= \sin^2 \tilde{\theta}(t_*) \coth(\tilde{x}(t_*)>0$ at any possible critical point $t_*$, we conclude that $\tilde{x}$ has no critical point in $(0,\infty)$. As consequence, $\tilde{\theta} \in (0,\pi /2)$ in $(t_0, \infty)$. The rest of the proof follows now the same arguments as in Lemma \ref{lemma 1 rot}.
\end{proof}

\begin{theorem}\label{rotatinal winglike}
There exists a family of properly embedded $\varphi$-minimal surface in $\mathbb{H}^2\times\mathbb{R}$, rotationally symmetric. The surface has the topology of an annulus and is a bi-graph. Moreover, each of its graphical components has unbounded hyperbolic distance from the origin of $\mathbb{H}^2$, the height function is unbounded, and its generating curve is of winglike type.
\end{theorem}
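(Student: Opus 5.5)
The plan is to build the surface by rotating the complete profile curve obtained from the solution of \eqref{prob 1} with the non-symmetric initial data \eqref{cond inicial nao convexo}, $x(0)=x_0>0$, $z(0)=z_0\in(a,\infty)$, $\theta(0)=0$. The argument then has to establish three things about this curve: it is a complete curve staying in $\{x>0\}$, it splits into two graphical arcs, and it is properly embedded. Varying $x_0>0$ and $z_0$ will give the announced family.

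\textbf{Positive times.} By Lemma \ref{crescimento de z} (case $x_0\ge 0$) and Proposition \ref{maximal interval}, the solution exists for all $t>0$, with $x'>0$ and $z'>0$. Since $x>x_0>0$, $\coth x$ stays bounded, so the argument of Lemma \ref{lemma 1 rot} applies verbatim. It gives $x\to\infty$, $z\to\infty$ and $\theta\to 0$. Hence the branch $\{t\ge 0\}$ is the graph of an increasing function $u_+$ over $r\in[x_0,\infty)$, with $u_+\to\infty$.

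\textbf{Negative times.} The reflected functions $(\tilde x,\tilde z,\tilde\theta)$ solve \eqref{ode parabolic negative time}-\eqref{initial-cond-neg-rot}, so Proposition \ref{prop:rot-tempo-negativo-prop} applies. The solution exists for all $t>0$, $\tilde z$ is strictly increasing, and $\tilde x$ first decreases to a unique minimum $\tilde x(t_0)=m>0$ and then increases to $\infty$, with $\tilde z\to\infty$ and $\tilde\theta\to 0$.

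\textbf{Gluing the two branches.} The full curve $t\in\mathbb{R}$ has $z$ with a unique critical point, the global minimum at $t=0$, and it never reaches the axis $x=0$. Its two ends go to infinity in both $x$ and $z$, so the curve is proper, and rotating it gives a properly immersed annulus $\mathbb{S}^1\times\mathbb{R}$. Splitting the curve at $t_0$ (the point of minimal distance $m$ to the axis) gives two arcs on which $x$ is monotone. The outer arc is $\{t\ge -t_0\}$ in the original parameter, and it contains the minimum of $z$. The inner arc is the part $\tilde t\ge t_0$. Each arc is a graph over $r\in[m,\infty)$, and since $\nu=-\cos\theta$ has a fixed sign on each arc, the rotated pieces are vertical graphs over the exterior of the disk of radius $m$. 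This gives the bigraph structure, and both components have unbounded distance and unbounded height.

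\textbf{Main obstacle: embeddedness.} The delicate step is showing that the two graphs $u_{\mathrm{out}}$ and $u_{\mathrm{in}}$ over $[m,\infty)$ never cross, i.e. $u_{\mathrm{in}}>u_{\mathrm{out}}$ for $r>m$.

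Near $r=m$ this holds because at $t_0$ the curve has $\tilde\theta=\pi/2$ and turns upward, so the inner branch leaves above the outer one.

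Suppose there is a first crossing $r_1>m$. Then both graphs solve \eqref{edo 1} and satisfy $u_{\mathrm{in}}(r_1)=u_{\mathrm{out}}(r_1)$, $u_{\mathrm{in}}'(r_1)\le u_{\mathrm{out}}'(r_1)$, while $u_{\mathrm{in}}>u_{\mathrm{out}}$ on $(m,r_1)$. To rule this out I would integrate the divergence form \eqref{edo 2} over $[m,r_1]$ and compare. Since $\ddot\varphi<0$ and $u_{\mathrm{in}}>u_{\mathrm{out}}$, the integrand satisfies $\dot\varphi(u_{\mathrm{in}})<\dot\varphi(u_{\mathrm{out}})$. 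At $r=m$, the term $u'\sinh r/\sqrt{1+u'^2}$ equals $+\sinh m$ for the inner branch and $-\sinh m$ for the outer one. The hope is that combining these facts with a careful tracking of the angle along the closed loop formed by the two arcs yields a sign contradiction, and that this comparison is the heart of the argument. If it fails, the fallback is a tangency-principle argument in the style of Theorem \ref{thm:tangency-principle}, applied to translated copies of the outer branch.
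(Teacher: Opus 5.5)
Your treatment of the two time directions and of the splitting at the neck is exactly what the paper relies on. Theorem \ref{rotatinal winglike} is stated without a separate proof, right after Proposition \ref{prop:rot-tempo-negativo-prop}, and is meant to be read off from it and Lemma \ref{lemma 1 rot}, just as Theorem \ref{teo parabolic com minimo} is obtained by combining the parabolic results. Up to that point your argument is sound: it gives a properly immersed annulus made of two graphs $u_{\mathrm{in}},u_{\mathrm{out}}$ over $\{r\ge m\}$. The genuine gap is the one you flag yourself. Nothing in the proposal proves $u_{\mathrm{in}}>u_{\mathrm{out}}$ on $(m,\infty)$, and without it you have neither embeddedness nor an embedded annulus or bigraph. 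You cannot borrow this step from the paper, which never addresses whether the two sheets meet. One cheap reduction you could record: let $z_m$ be the height of the neck. Since $u_{\mathrm{in}}$ is increasing, $u_{\mathrm{out}}\le z_m<u_{\mathrm{in}}$ until $u_{\mathrm{out}}$ climbs back to $z_m$, so a crossing can only occur where both sheets are increasing.

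The routes you suggest cannot close the gap.

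\emph{The integral comparison.} Set $F=u'\sinh r/\sqrt{1+(u')^2}$ and $G=F_{\mathrm{in}}-F_{\mathrm{out}}$. Then $G(m)=2\sinh m$, $G(r_1)<0$, and by \eqref{edo 2}
\[
G'=\sinh r\left(\frac{\dot\varphi(u_{\mathrm{in}})}{\sqrt{1+(u_{\mathrm{in}}')^2}}-\frac{\dot\varphi(u_{\mathrm{out}})}{\sqrt{1+(u_{\mathrm{out}}')^2}}\right).
\]
This has no definite sign. The only inequality available, $\dot\varphi(u_{\mathrm{in}})<\dot\varphi(u_{\mathrm{out}})$, makes $G'$ more negative, so it pushes toward the crossing rather than against it.

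\emph{The pointwise comparison.} The same happens pointwise. The right-hand side of \eqref{edo 1} has $u$-derivative $\ddot\varphi(u)(1+(u')^2)<0$, which is the wrong sign for a comparison principle. Wherever the slopes agree, $w=u_{\mathrm{in}}-u_{\mathrm{out}}>0$ has $w''<0$. So a $w$ that rises from $0$, has a single maximum and returns to $0$ at $r_1$ is perfectly consistent.

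\emph{Tracking the angle around the loop.} Apply Green's theorem to $d\theta=\dot\varphi(z)\,dx-\coth x\,dz$ on $D=\{m<r<r_1,\ u_{\mathrm{out}}<z<u_{\mathrm{in}}\}$. This only yields
\[
\pi+\delta=\iint_D\left(\frac{1}{\sinh^2 x}-\ddot\varphi(z)\right)dx\,dz,\qquad \delta=\arctan u_{\mathrm{out}}'(r_1)-\arctan u_{\mathrm{in}}'(r_1)\in(0,\tfrac{\pi}{2}),
\]
in which both sides are positive. This is Gauss--Bonnet for a geodesic monogon. The profile curves are exactly the geodesics of $\sinh^2x\,e^{2\varphi(z)}(dx^2+dz^2)$, a metric of positive curvature, and sign considerations do not exclude such loops there. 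A contradiction would require a quantitative bound of the right-hand side by $\pi$, which you do not have.

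\emph{The fallback.} Vertical translates of a $\varphi$-minimal surface are not $\varphi$-minimal unless $\dot\varphi$ is constant, so Theorem \ref{thm:tangency-principle} does not apply. The translate $u_{\mathrm{out}}-c$ is only a strict supersolution. At the first contact produced by sliding, it touches $u_{\mathrm{in}}$ from below, and there the inequality $(u_{\mathrm{out}}-c)''<u_{\mathrm{in}}''$ is compatible with the contact rather than contradicting it.

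Embeddedness needs genuinely global information about the two sheets, for instance a comparison of their behaviour as $r\to\infty$. Neither your proposal nor the paper supplies it.
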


\begin{figure}[ht]
    \centering    \begin{subfigure}[t]{0.50\textwidth}
        \centering
        \includegraphics[width=\linewidth]{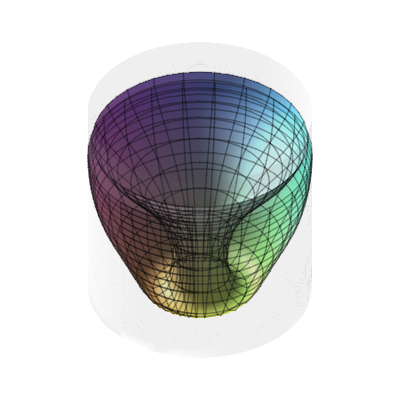}
        \caption{Singular rotational Winglike}
    \end{subfigure}
    \hfill
    \begin{subfigure}[t]{0.40\textwidth}
        \centering
        \includegraphics[width=\linewidth]{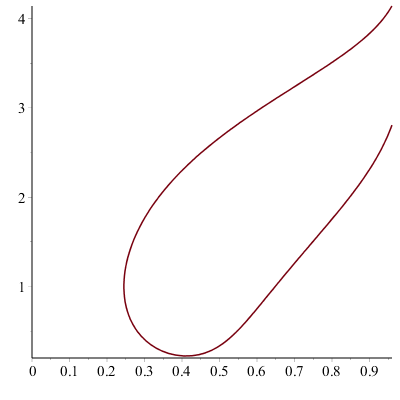}
        \caption{Profile curve}
    \end{subfigure}
     \caption{Singular rotational winglike and its profile curve.}
\end{figure}

These types of surface are known by winglike surfaces or, in the translating solitons case, the Translating Catenoids. In our work, we will call them $\varphi-$minimal winglike. Observe, as mentioned by Bueno in \cite{bueno2018translating}, the winglike are characterized as a one parameter family of immersed annuli $\{C_r\}$, where the parameter $r$ denotes the distance of each $\{C_r\}$ to the axis of rotation. Moreover, each $C_r$ lies outside of a cylinder of radius $r$.

\section{The \texorpdfstring{$\varphi$-minimal symmetric}{phi-minimal} surface without a minimum height}\label{sin minimo section}

This section is dedicated to studying a $\varphi$-minimal surface whose height function does not have a minimum point. The ideas here are similar to Proposition \ref{prop existencia}, where we analyse the $\varphi-$minimal rotational and $\varphi$-minimal parabolic near the singular point $z = a$. 

First, let $\Sigma$ be a $\varphi$-minimal surface foliated by geodesic circles (or horocycles). Consider $\Sigma$ as a horizontal graph given by $x = y(z)$, where $z$ is the parameter along the vertical direction. From Equation \eqref{system 1}, we have
\begin{equation*}
    \frac{\text{d}y}{\text{d} z} = \cot \theta.
\end{equation*}
Then, 
\begin{equation}\label{graph horizontal}
    y'' = \left( 1 +y'^2 \right) \left( (\tanh y)^i - \dot{\varphi}(z)y'\right),
\end{equation} 
with 
\begin{table}[ht]
\centering
\begin{tabular}{|l|l|l|}\hline
    $i$ & Function & Case \\ \hline
    $-1$ & $\coth(y)$ & Rotational\\ \hline
    $0$ & $-1$ & Parabolic \\ \hline
\end{tabular}
\end{table}

By a direct computation, equation \eqref{graph horizontal} is equivalent to
\begin{equation}\label{eqv}
    \left(
        \frac{y'}{\sqrt{1+y'^2}}e^{\varphi(z)}
    \right)'
    =
    e^{\varphi(z)}
    \frac{(\tanh y)^i}{\sqrt{1+y'^2}}.
\end{equation}
We will study the degenerate initial value problem given by
\eqref{eqv} and
\begin{equation} \label{cond inicial degenerada}
    y(a)=x_0,\qquad y'(a)=y_0.
\end{equation}

Define the functions $f_i: \mathbb{R}^2 \to \mathbb{R}$ and $g: \mathbb{R}\to \mathbb{R}$ as
\begin{equation*}
      f_i(s,t)=\frac{(\tanh s)^i}{\sqrt{1+t^2}},
\qquad
g(t)=\frac{t}{\sqrt{1+t^2}}
\end{equation*}

With this notation, the degenerate initial value problem becomes
\begin{equation}\label{degenerate system}
\begin{cases}
\displaystyle
\left(e^{\varphi(z)}g(y'(z))\right)'
    =e^{\varphi(z)}f_i(y(z),y'(z)), \\
y(a)=x_0,\quad y'(a)=y_0.
\end{cases}
\end{equation}
In the rotational case, we assume $x_0\neq 0$, since
$f_{-1}$ is not defined at $x_0=0$.

We first determine the admissible value of $y_0$ and then prove
existence and uniqueness.

\begin{lemma} Let $y=y(z)$, $z\in]a,a+\delta[$ be  a solution of \eqref{degenerate system} and suppose that there exist $\lim_{z\to a}y(z) = x_0\neq 0$ and  $\lim_{z\to a} y'(z)=L_0\in[-\infty,\infty]$. Then $L_0=0$.
\end{lemma}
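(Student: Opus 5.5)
We follow the proof of Lemma \ref{y0=0}, with the weight $\sinh(r)$ replaced by $e^{\varphi(z)}$. The first step is to integrate \eqref{degenerate system} from $\epsilon$ to $z$, where $a<\epsilon<z<a+\delta$:
$$ e^{\varphi(z)}g(y'(z)) - e^{\varphi(\epsilon)}g(y'(\epsilon)) = \int_\epsilon^z e^{\varphi(s)} f_i(y(s),y'(s))\,ds. $$
We then let $\epsilon\to a^+$. By Remark \ref{implicações hipotese 3}, $\varphi(\epsilon)\to-\infty$, so $e^{\varphi(\epsilon)}\to 0$. Since $|g|<1$, the boundary term $e^{\varphi(\epsilon)}g(y'(\epsilon))$ vanishes in the limit.

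For the integral, we need $f_i$ to stay bounded near $z=a$. This holds because $|f_i|\le |(\tanh y)^i|$, and $y(s)\to x_0\neq 0$. In the rotational case this keeps $\coth y$ bounded on $]a,a+\delta'[$; in the parabolic case $f_0$ is bounded by $1$ outright. Since $e^{\varphi}$ is bounded near $a$ (because $\varphi$ is increasing), the integral converges as $\epsilon\to a^+$. We obtain
$$ g(y'(z)) = \frac{1}{e^{\varphi(z)}}\int_a^z e^{\varphi(s)} f_i(y(s),y'(s))\,ds. $$

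Next we take $z\to a^+$ and show the right-hand side tends to $0$. The numerator and the denominator $e^{\varphi(z)}$ both tend to $0$, so we apply L'H\^opital's rule. The quotient of derivatives is
$$ \frac{e^{\varphi(z)}f_i(y,y')}{\dot\varphi(z)e^{\varphi(z)}} = \frac{f_i(y(z),y'(z))}{\dot\varphi(z)}. $$
Here $f_i$ is bounded, since the factor $1/\sqrt{1+y'^2}$ has a limit whatever $L_0$ is, and $\dot\varphi(z)\to\infty$ by Remark \ref{implicações hipotese 3}. So this quotient tends to $0$.

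If one prefers to avoid L'H\^opital, there is a direct estimate. Because $\varphi$ is increasing, $e^{\varphi(s)}\le e^{\varphi(z)}$ for $s\le z$, so the right-hand side is at most $C(z-a)$ in absolute value. This also tends to $0$ and gives the same conclusion.

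It follows that $g(y'(z))\to 0$. Since $g$ is a homeomorphism of $[-\infty,\infty]$ onto $[-1,1]$ that vanishes only at $0$, we conclude $L_0=0$.

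The only delicate point is justifying the limit $\epsilon\to a^+$: we need $e^{\varphi(\epsilon)}\to 0$, which comes from (H3) through Remark \ref{implicações hipotese 3}, together with local boundedness of $f_i$. The hypothesis $x_0\neq 0$ is used exactly there, in the rotational case.
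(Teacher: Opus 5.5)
Your proposal is correct and follows the paper's proof essentially step by step. You integrate the divergence form \eqref{eqv} from $\epsilon$ to $z$, kill the boundary term using $\varphi(\epsilon)\to-\infty$, use $x_0\neq 0$ to bound $f_i$ near $a$, and show $e^{-\varphi(z)}\int_a^z e^{\varphi}f_i\,dt\to 0$, so $g(y'(z))\to 0$. Your alternative bound $|g(y'(z))|\le M(z-a)$, obtained from the monotonicity of $\varphi$, is a slightly more elementary substitute for the paper's L'H\^opital step but does not change the route.
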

\begin{proof}
    Fix $\delta > 0$ and let $a < \varepsilon < z < a+ \delta$.  Integrating \eqref{eqv} between $\varepsilon$ and $z$, we obtain
$$ e^{\varphi(z)}g(y'(z)) -
e^{\varphi(\varepsilon)}g(y'(\varepsilon))
=
\int_\varepsilon^z
e^{\varphi(t)}f_i(y(t),y'(t))\,dt.
$$
Since $g$ is bounded and $\varphi(\varepsilon)\to-\infty$ as
$\varepsilon\to a^+$, the second term on the left-hand side tends
to zero. Thus,
$$
g(y'(z))
=
e^{-\varphi(z)}
\int_a^z e^{\varphi(t)}f_i(y(t),y'(t))\,dt.
$$

The function
$f_i(y(t),y'(t))$ is bounded near $a$ and, for some $M>0$,
$$
|g(y'(z))|
\leq
M\frac{\displaystyle\int_a^z e^{\varphi(t)}\,dt}
        {e^{\varphi(z)}}.
$$
Applying L'Hôpital's rule and using the hypotheses on $\varphi$, we get
$$
\lim_{z\to a^+}
\frac{\displaystyle\int_a^z e^{\varphi(t)}\,dt}
     {e^{\varphi(z)}}
=
\lim_{z\to a^+}\frac{1}{\dot{\varphi}(z)}
=0.
$$
Therefore ${\lim_{z\to a}g(y'(z))=0}$. Since
$g(t)=t/\sqrt{1+t^2}$, we conclude that $L_0=0$.
\end{proof}

\subsection{Parabolic without minimum point} 

Consider the operator $I: C^1([a, a+\delta]) \to C^1([a, a+\delta])$, with $\delta$ to be determined, given by
\begin{equation*}
    I\left(y(s)\right) = e^{ - \varphi(s)}\int_a^s e^{\varphi(t)} f_0(y(t), y'(t)) dt.
\end{equation*}
Define the operator $S: C^1([a, a+\delta]) \to C^1([a, a+\delta])$ by
\begin{equation}
    S\left(y(z)\right) = \int_a^z g^{-1}\left(I\left(y(s)\right)\right)ds  + x_0 .
\end{equation}
We will prove that, there exists $\delta > 0$ such that $S$ is well defined and has a unique fixed point. Then, we will prove that \eqref{graph horizontal} has a solution arbitrarily close to $z = a$.

\begin{prop}\label{oper i}
    There exists $\delta > 0$ such that 
    $$|I(y(s))| < 1, \quad \forall s \in (a, a+\delta).$$
\end{prop}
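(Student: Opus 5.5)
The plan is to bound $I$ directly, using only that $f_0$ is bounded and that, by the L'H\^{o}pital computation in the preceding lemma, $e^{-\varphi(s)}\int_a^s e^{\varphi(t)}\,dt$ tends to $0$ as $s\to a^+$. In the parabolic case $f_0(y,y')=-1/\sqrt{1+y'^2}$, so $|f_0|\le 1$ for every $y\in C^1([a,a+\delta])$, uniformly in $y$. This uniformity is the key simplification compared with the rotational operator $J$, where one had to control $\dot\varphi(u)$. For any $s\in(a,a+\delta)$ this gives
\begin{equation*}
|I(y(s))|\le e^{-\varphi(s)}\int_a^s e^{\varphi(t)}\,dt =: h(s).
\end{equation*}
The integral converges: $\varphi$ is increasing by (H1), so $e^{\varphi(t)}\le e^{\varphi(s)}$ for $t\le s$. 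The same monotonicity already gives the crude bound $h(s)\le s-a$.

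This crude bound finishes the proof. Taking $\delta<1$, we get $|I(y(s))|\le s-a<\delta<1$ for all $s\in(a,a+\delta)$ and all $y$. Alternatively, one can invoke the L'H\^{o}pital argument from the previous lemma: by Remark \ref{implicações hipotese 3}, $\varphi(s)\to-\infty$ and $\dot\varphi(s)\to\infty$ as $s\to a^+$, so
\begin{equation*}
\lim_{s\to a^+}h(s)=\lim_{s\to a^+}\frac{1}{\dot\varphi(s)}=0,
\end{equation*}
and $\delta$ can be chosen so that $h<1$ on $(a,a+\delta)$. I would present the monotonicity bound, since it avoids limits and gives an explicit $\delta$ (any $\delta<1$). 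I would also note that $I(y(s))\to 0$ as $s\to a^+$, which is consistent with $y'(a)=0$.

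I do not expect a real obstacle. The only point needing care is whether $\delta$ can be chosen independently of $y$, and it can because $|f_0|\le1$ uniformly. For the later contraction argument it helps to record the stronger conclusion $\sup|I|\le\delta$. This keeps $I(y)$ inside a compact subinterval of $(-1,1)$, on which $g^{-1}$ is Lipschitz, exactly as needed for $S$ to be well defined.
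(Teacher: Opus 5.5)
Your proof is correct. The reduction is the same as the paper's: you use $|f_0|\le 1$, uniformly in $y$, to get $|I(y(s))|\le h(s):=e^{-\varphi(s)}\int_a^s e^{\varphi(t)}\,dt$. The estimate on $h$ is where you differ. The paper uses the consequences of (H3), namely $\varphi\to-\infty$ and $\dot{\varphi}\to\infty$ as $s\to a^+$, and L'H\^opital's rule to get $h(s)\to 0$; this yields an unspecified $\delta$. You use only $\dot{\varphi}>0$: since $e^{\varphi(t)-\varphi(s)}\le 1$ for $a<t\le s$, you get $h(s)\le s-a$, so every $\delta\le 1$ works.

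Your version is more elementary and more general. It does not need (H3) at all, it settles convergence of the improper integral in passing, and it gives an explicit $\delta$ independent of $y$ and of the particular potential. Its linear rate also makes the contraction estimate in Proposition \ref{contraction} quantitative. The bounds $\int_a^z\int_a^s e^{\varphi(t)-\varphi(s)}\,dt\,ds\le (z-a)^2/2$ and $h(z)\le z-a$ control the $C^0$ and $C^1$ parts of $S(y)-S(u)$ by $O(\delta)$. The paper only asserts there that the inner integral is bounded.

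The L'H\^opital route gives nothing extra for this particular statement. Its merit is consistency with the neighbouring arguments: the lemma giving $y'(a)=0$ and the computation $y''(a)=-1/(1+L)$. The latter rests on the finer limit $h(s)/(s-a)\to 1/(1+L)$, where (H3) is genuinely needed.
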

\begin{proof}

By Remark \ref{implicações hipotese 3},
$\varphi(s)\to-\infty$ and $\dot{\varphi}(s)\to\infty$
as $s\to a^+$.  
Since $|f_0(y(t),y'(t))|\leq 1$, we have
$$
|I(y(s))|
\leq
e^{-\varphi(s)}\int_a^s e^{\varphi(t)}\,dt.
$$By L'Hôpital's rule,
$$
\lim_{s\to a^+}
e^{-\varphi(s)}\int_a^s e^{\varphi(t)}\,dt
=
\lim_{s\to a^+}\frac{1}{\dot{\varphi}(s)}
=0.
$$
Therefore, we may choose $\delta>0$ such that
$|I(y(s))|<1$ for every $s\in(a,a+\delta)$.
\end{proof}

\begin{prop}\label{contraction}
    The operator $S$ is a contraction
\end{prop}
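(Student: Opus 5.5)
We prove that $S$ is a contraction on a closed subset $X$ of $C^1([a,a+\delta])$, for $\delta$ small, with respect to the norm $\|y\|_{C^1}=\|y\|_\infty+\|y'\|_\infty$. We take $X$ to be the set of functions with $y(a)=x_0$, $|y-x_0|\le 1$ and $|y'|\le 1$. The plan has four steps.

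\textbf{Step 1: $S$ maps $X$ into itself.} By Proposition \ref{oper i}, $|I(y)|\le \kappa(\delta):=\sup_{s\in(a,a+\delta)}e^{-\varphi(s)}\int_a^s e^{\varphi(t)}\,dt$, and $\kappa(\delta)\to0$ as $\delta\to0$ by the L'H\^opital computation already done. Hence $I(y)$ takes values in a compact subinterval $[-\kappa,\kappa]\subset(-1,1)$. On that interval $g^{-1}(w)=w/\sqrt{1-w^2}$ is smooth and Lipschitz with some constant $C_g$, and $|g^{-1}(w)|\le 2|w|$ for $\kappa\le 1/2$. Since $(Sy)'=g^{-1}(I(y))$, we get $|(Sy)'|\le 2\kappa\le 1$ and $|Sy-x_0|\le 2\kappa\delta\le1$ for $\delta$ small. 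We also have $Sy(a)=x_0$ and $(Sy)'(a)=0$, so $S(X)\subset X$, and $Sy\in C^1$ because $I(y)$ is continuous on $(a,a+\delta]$ and tends to $0$ at $a$.

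\textbf{Step 2: $f_0$ is Lipschitz.} In the parabolic case $f_0(s,t)=-1/\sqrt{1+t^2}$ does not depend on $s$, and $|\partial_t f_0|\le 1$. Therefore
$$|f_0(y,y')-f_0(v,v')|\le \|y'-v'\|_\infty .$$

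\textbf{Step 3: the key estimate.} We have
$$|I(y)(s)-I(v)(s)|\le e^{-\varphi(s)}\int_a^s e^{\varphi(t)}\,dt\;\|y-v\|_{C^1}\le \kappa(\delta)\|y-v\|_{C^1}.$$
Consequently $\|(Sy)'-(Sv)'\|_\infty\le C_g\kappa(\delta)\|y-v\|_{C^1}$. Integrating from $a$, and using $y(a)=v(a)$, gives $\|Sy-Sv\|_\infty\le \delta C_g\kappa(\delta)\|y-v\|_{C^1}$. Summing,
$$\|Sy-Sv\|_{C^1}\le C_g\kappa(\delta)(1+\delta)\|y-v\|_{C^1}.$$
Choosing $\delta$ so that this factor is less than $1$ shows that $S$ is a contraction.

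\textbf{Step 4: the main obstacle.} The delicate point is that the weight $e^{-\varphi(s)}$ blows up at $s=a$. We need the uniform smallness $\sup_{(a,a+\delta)}e^{-\varphi}\int_a^s e^{\varphi}\to0$, not just the pointwise limit. This follows from (H3) and Remark \ref{implicações hipotese 3}: the ratio tends to $\lim 1/\dot\varphi=0$, and a function continuous on $(a,a+\delta]$ with limit $0$ at $a$ has a small supremum on short intervals. We also need $\int_a^s e^{\varphi}$ to be finite. This holds because $e^{\varphi}$ is bounded near $a$, since $\varphi\to-\infty$.
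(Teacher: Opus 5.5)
Your proof is correct and follows the same route as the paper: Lipschitz bounds for $f_0$ and $g^{-1}$, combined with the weighted estimate on $e^{-\varphi(s)}\int_a^s e^{\varphi(t)}\,dt$ from Proposition \ref{oper i}, and then the Banach fixed-point theorem. Your version is in fact more complete. The paper only estimates $\|Sy-Su\|_\infty$, where mere boundedness of that weighted integral suffices thanks to the extra factor $\delta$. It omits the derivative part of the $C^1$ norm, which genuinely needs your smallness $\kappa(\delta)\to 0$; in fact $\kappa(\delta)\le\delta$ directly, since $\varphi$ is increasing. It also omits the check that $S$ maps the ball into itself.
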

\begin{proof}
    Let $\varepsilon > 0$ such that $f_0$ and $g^-1$ are Lipschitz in the domains {$[x_0-\varepsilon, x_0+\varepsilon]\times ]-\varepsilon,\varepsilon[$ and $[-\varepsilon, \varepsilon]$, respectively. Thus, for all $y, u \in B(x_0, \varepsilon)$ and for all $z \in (a, a+\delta)$}, we have
      \begin{align*}
        |S\left(y(z)\right) - S\left(u(z)\right)| &\leq c_1 \int_a^z \text{e}^{-\varphi(s)} \left|\int_a^s \text{e}^{\varphi(t)}|f(y,y')- f(u,u') dt \right|ds\\
        &\leq c_2\left( \left\lVert y - u \right \rVert_\infty + \left\lVert y' - u' \right\rVert_\infty\right)\int_a^z\int_a^s\text{e}^{-\varphi(s)}\text{e}^{\varphi(t)}dtds
    \end{align*}

        From Proposition \ref{oper i}, the integral
   $$\int_a^s\text{e}^{-\varphi(s)}\text{e}^{\varphi(t)}dt$$
   is bounded. Hence, we conclude that for $\delta >0$ small enough, $S$ is a contraction in the closed ball {$\overline{B(a,\varepsilon)}$} in $C^1[a,a+\delta])$.
\end{proof}

\begin{theorem}\label{parabolic locally non-mim}
There exists a unique solution $y \in C^2([a, a+\delta])$ of the integral equation
$$ y(z) = \int_a^z g^{-1}\!\left( e^{-\varphi(s)} \int_a^s f_0(y(t), y'(t)) e^{\varphi(t)} dt \right) ds + x_0. $$
Moreover, 
$$y'(a)=0,\qquad y''(a)=-\frac{1}{1+L}. $$
\end{theorem}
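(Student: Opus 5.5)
The plan is to obtain the solution from the Banach fixed point theorem, using Propositions \ref{oper i} and \ref{contraction}, and then to read off $y'(a)$ and $y''(a)$ from the integral equation by L'H\^opital's rule together with hypothesis (H3).

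\emph{Existence and uniqueness.} By Proposition \ref{oper i} there is $\delta>0$ with $|I(y)|<1$ on $(a,a+\delta)$, so $g^{-1}(I(y))$ is defined and $S$ is well defined on a closed ball of $C^1([a,a+\delta])$ around the constant $x_0$. Shrinking $\delta$ if needed, we check that $S$ maps this ball into itself: $|S(y)-x_0|\le \delta\sup|g^{-1}(I)|$, and $|S(y)'|=|g^{-1}(I(y))|$ is small because $e^{-\varphi(s)}\int_a^s e^{\varphi}\,dt\to 0$. Proposition \ref{contraction} then gives a contraction, hence a unique fixed point $y\in C^1([a,a+\delta])$. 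Differentiating the fixed point identity gives $g(y')=I(y)$. Differentiating once more shows that $y$ is $C^2$ on $(a,a+\delta]$ and solves \eqref{eqv}, hence \eqref{graph horizontal} with $i=0$:
\[
y''=(1+y'^2)\bigl(-1-\dot\varphi(z)\,y'\bigr).
\]
Moreover $y'(a)=g^{-1}(\lim_{s\to a}I(y)(s))=0$, by the limit computed in Proposition \ref{oper i}.

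\emph{The value of $y''(a)$.} Since $y'(a)=0$ and $g^{-1}(t)=t+O(t^3)$, we have
\[
y''(a)=\lim_{z\to a^+}\frac{y'(z)}{z-a}=\lim_{z\to a^+}\frac{\int_a^z e^{\varphi(t)}f_0(y,y')\,dt}{(z-a)e^{\varphi(z)}}.
\]
Both numerator and denominator tend to $0$: the numerator trivially, and the denominator because $\varphi\to-\infty$ by Remark \ref{implicações hipotese 3}. Applying L'H\^opital's rule, the derivative of the denominator is $e^{\varphi}\bigl(1+(z-a)\dot\varphi\bigr)$, so the quotient tends to
\[
\lim_{z\to a^+}\frac{f_0(y,y')}{1+(z-a)\dot\varphi(z)}=\frac{-1}{1+L},
\]
using $f_0\to -1/\sqrt{1+0}=-1$ and (H3). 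This covers $L=\infty$ as well, with value $0$.

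\emph{Continuity of $y''$ at $a$.} I then show that $\lim_{z\to a}y''(z)$ exists and equals the same value, so that $y\in C^2([a,a+\delta])$. From the ODE it suffices to compute $\lim \dot\varphi\,y'$. Write
\[
\dot\varphi\,y'=(z-a)\dot\varphi\cdot\frac{y'}{z-a}\;\longrightarrow\; L\cdot\frac{-1}{1+L}.
\]
This yields $y''\to -1+\frac{L}{1+L}=-\frac{1}{1+L}$ when $L<\infty$.

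I expect the case $L=\infty$ to be the main obstacle, since the product above is then of the form $\infty\cdot 0$. There I would instead write
\[
\dot\varphi\, I=\frac{\int_a^z e^{\varphi}f_0\,dt}{e^{\varphi}/\dot\varphi},
\]
and apply L'H\^opital's rule, which requires controlling $\ddot\varphi/\dot\varphi^2$ near $a$. Failing that, I would argue directly from the first-order linear structure $(e^{\varphi}y')'\approx -e^{\varphi}$ with a comparison or squeeze argument, to show that $\dot\varphi\,y'\to -1$, hence $y''\to 0=-1/(1+L)$.
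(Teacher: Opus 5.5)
Your existence and uniqueness step and your computation of $y''(a)$ follow the paper's proof: the Banach fixed point via Propositions \ref{oper i} and \ref{contraction}, then L'H\^opital with (H3). You are in fact more careful than the paper: you check that $S$ maps the ball into itself, and you read $y'(a)=0$ directly off $I(y)\to 0$. The one-sided derivative $y''(a)=\lim_{z\to a^+} y'(z)/(z-a)=-1/(1+L)$ is correct for every $L\in(0,\infty]$. Your computation $\dot\varphi\,y'=(z-a)\dot\varphi\cdot y'/(z-a)\to -L/(1+L)$ is exactly the ``straightforward computation'' in the ODE that the paper invokes, and it proves $y\in C^2([a,a+\delta])$ when $L<\infty$.

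The gap is the case $L=\infty$, and neither of your fallback routes can close it, because continuity of $y''$ at $a$ does not follow from (H1)--(H3). Since $f_0(y,y')\to-1$ and $y'/g(y')\to 1$, you have $\dot\varphi\,y'=-R(z)\bigl(1+o(1)\bigr)$, where
\[
R(z)=\dot\varphi(z)\,e^{-\varphi(z)}\int_a^z e^{\varphi(t)}\,dt
=\frac{1}{h(z)}\int_a^z \exp\Bigl(-\int_t^z\frac{ds}{h(s)}\Bigr)\,dt,
\qquad h:=\frac{1}{\dot\varphi}.
\]
Hence $y''=-1+R(z)+o(1)$, and continuity of $y''$ at $a$ is equivalent to $R(z)\to 1$.

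Near $a$, (H1)--(H3) with $L=\infty$ say only that $h$ is smooth, positive and strictly increasing with $h(z)/(z-a)\to 0$. This yields the upper bound $R\le 1$ and no matching lower bound, so your squeeze argument has nothing to squeeze with. Concretely, take $a=0$, $z_n=2^{-n}$ and $\eta_n=2^{-n^2}$. Let $h$ take values in $[\eta_{n+1},(1+1/n)\eta_{n+1}]$ on $[z_{n+1},z_n-\eta_{n+1}]$, then rise to $\eta_n$ on $[z_n-\eta_{n+1},z_n]$; smooth it, and extend it to $+\infty$ so that (H2) holds. Then $h(z)/z\to 0$, and $R(z_n)\le (2+1/n)\,\eta_{n+1}/\eta_n\to 0$, while $R\to 1$ along the midpoints of $[z_{n+1},z_n]$. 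So $y''$ oscillates between $-1$ and $0$ and has no limit at $a$, even though $y''(a)=0$ exists as a derivative.

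Your L'H\^opital route therefore needs an extra hypothesis such as $\ddot\varphi/\dot\varphi^{2}\to 0$ as $z\to a^+$. This holds, for instance, for $\dot\varphi=(z-a)^{-p}$ with $p>1$, and under it your argument does give $\dot\varphi\,y'\to-1$ and $y''\to 0$. Without such a hypothesis, the correct conclusion for $L=\infty$ is only $y\in C^2((a,a+\delta])$, with $y'$ differentiable at $a$ and $y''(a)=0$. The paper's proof has the same lacuna, since its ``straightforward computation'' is the $L<\infty$ one.
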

\begin{proof}
  By Proposition \ref{contraction} and the Banach fixed-point theorem, the integral equation has a unique solution in $C^1([a,a+\delta])$ and $y(a)=x_0$. Differentiating it for $z>a$, we see that $y$ solves the degenerate initial value problem \eqref{degenerate system}, with $y_0=y'(a)$. By the preceding lemma, $y'(a)=0$. 

  To study its second derivative at $a$, write
$$
g(y'(z))
=e^{-\varphi(z)}
\int_a^z e^{\varphi(t)}
f_0(y(t),y'(t))\,dt.
$$
Since $f_0(y(z),y'(z))\to-1$, L'Hôpital's rule yields
\begin{align*}
\lim_{z\to a^+}\frac{g(y'(z))}{z-a}
&=
\lim_{z\to a^+}
\frac{\displaystyle\int_a^z e^{\varphi(t)}
f_0(y(t),y'(t))\,dt}
{(z-a)e^{\varphi(z)}}\\
&=
\lim_{z\to a^+}
\frac{f_0(y(z),y'(z))}
{1+(z-a)\dot{\varphi}(z)}
=-\frac{1}{1+L}.
\end{align*}
On the other hand, 
\begin{align*}
    -\frac{1}{1+L} &= \lim_{z \to a^+} \frac{g(y')}{z-a} \\
            & = \lim_{z \to a^+} \frac{y'(z)}{z-a} = y''(a). 
\end{align*}
Thus, $y'$ is differentiable in $a$. The continuity of $y''$ in $a$ follows by a straight forward computation letting $z \to a$ in \eqref{graph horizontal}. Therefore, $y \in C^2([a, a+\delta])$ and $y''(a) = -\frac{1}{1+L}$.

\end{proof}

The previous theorem provides a local solution of
\eqref{graph horizontal} satisfying $y(a)=x_0$ and $y'(a)=0$.
We now extend this solution to its maximal interval of existence
and study the geometry of the corresponding parabolic
$\varphi$-minimal surface.

\begin{theorem} \label{teo sin min parabolic}
For every $x_0\in\mathbb R$, there exists a
$\varphi$-minimal surface $\Sigma$ in
$\mathbb H^2\times(a,\infty)$, invariant under a
one-parameter group of parabolic isometries of
$\mathbb H^2$ and regular up to its boundary, with
$(\cosh(x_0),0,\sinh(x_0),a)\in\partial\Sigma$.
The height function does not attain a minimum in the
interior of $\Sigma$. Moreover, both the distance
function and the height function are unbounded.
\end{theorem}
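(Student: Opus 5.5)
Starting from the local solution $y\in C^2([a,a+\delta])$ given by Theorem \ref{parabolic locally non-mim}, with $y(a)=x_0$, $y'(a)=0$ and $y''(a)=-\frac{1}{1+L}<0$, I pass to the arclength description. Write the horizontal graph $x=y(z)$ as a profile curve $(x(t),z(t))$ solving \eqref{ode parabolic 2} for $t>0$. Since $y'(a)=0$, the curve leaves the boundary point $(x_0,a)$ vertically. Orienting it so that $z$ increases, we get $\theta(t)\to\pi/2$ as $t\to0^+$; because $y''<0$ near $a$, the curve bends towards decreasing $x$ (with the convention $\theta\in(\pi/2,\pi)$ just after). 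Next I take any interior point $t_1>0$ as the initial condition and extend the solution of \eqref{ode parabolic 2} to its maximal interval. At $t_1$ we have $\theta(t_1)\in(\pi/2,\pi)$, $z'>0$ and $x'<0$. The surface $\Sigma$ is obtained from \eqref{parametrization parabolic}. It is regular up to $z=a$ by the $C^2$ regularity of $y$, and it contains $(\cosh x_0,0,\sinh x_0,a)$ in its boundary.

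For the forward behaviour I follow the proof of Proposition \ref{tempo negativo prop} almost verbatim, since that proof only uses $\tilde\theta\in(\pi/2,\pi)$, $\tilde z'>0$ and the equation.

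1. Arguing as in Lemma \ref{crescimento de z}, $z$ is strictly increasing, since at a first critical point $z''=\dot\varphi\cos^2\theta>0$.
2. By Proposition \ref{maximal interval}, the solution exists for all $t>0$.
3. $\theta$ cannot reach $\pi/2$ from above, because there $\theta'=1>0$. It cannot reach $\pi$ either, because $z'>0$. Hence $\theta\in(\pi/2,\pi)$, so $x'<0$ throughout, and $\Sigma$ is a horizontal graph over the height.
4. $\theta$ has at most one critical point, since $\theta''=\cos\theta\sin\theta\,\ddot\varphi>0$ at any critical point. So $\theta$ is eventually monotone and $\theta\to\theta_L$.
5. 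Exactly as before, $z$ bounded would force $\theta_L=\pi$ and $\theta'\to-\dot\varphi(z_L)<0$, a contradiction. Hence $z\to\infty$.
6. Then $\theta'\to\sin\theta_L$ forces $\theta_L=\pi$.
7. Once $\theta$ is increasing, $x'<0$ and $x''<0$, so $x\to-\infty$.

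It follows that both the height and the (signed) distance function $x$ are unbounded.

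The height has no interior minimum because $z$ is strictly increasing along the whole profile, and its infimum $a$ is attained only at the boundary point. Properness and embeddedness follow from the graph property: $x$ is monotone in $z$ and $z\to\infty$.

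The main obstacle is the initial step. We must ensure the solution of the integral equation matches the ODE flow with $\theta\in(\pi/2,\pi)$ immediately for $t>0$, and not $\theta$ slightly less than $\pi/2$. I would read the sign off $y''(a)<0$: since $\cot\theta=y'<0$ for small $z-a>0$, we get $\theta>\pi/2$. A secondary point is step 4 of the forward argument, where one must also rule out $\theta'<0$ for all time. I handle it, as in Proposition \ref{tempo negativo prop}, by using $\dot\varphi\to0$ (H2) to contradict $\sin\theta_L>0$.
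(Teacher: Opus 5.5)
Your proposal is correct and follows essentially the paper's route. You take the local fixed-point solution of Theorem \ref{parabolic locally non-mim}, establish $\theta>\pi/2$ at a starting point just above $z=a$, and then run the forward analysis of \eqref{ode parabolic 2}. Your steps 1--7 are modelled on Proposition \ref{tempo negativo prop}, and they fit better than Lemma \ref{lema 3} and Proposition \ref{prop x comportamento}, which the paper cites but which concern the initial condition $\theta(0)=0$.

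The one step to repair is the sign argument. (H3) allows $L=\infty$, and then $y''(a)=-1/(1+L)=0$ gives no sign. Instead, read $y'<0$ on all of $(a,a+\delta]$ directly from $g(y'(z))=e^{-\varphi(z)}\int_a^z e^{\varphi(t)}f_0(y,y')\,dt<0$, using $f_0=-1/\sqrt{1+y'^2}<0$. This is exactly what the paper does.
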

\begin{proof}

    The local existence of such a surface  is given by Theorem \ref{parabolic locally non-mim},  for a sufficiently small $\delta > 0$. We can extend this solution for the maximal interval of existence. Note that $y$ can be written as $x(t) = y\left(z(t)\right) $, where $x$ and $z$ are solutions of \eqref{ode parabolic 2}, with the initial conditions
    $$x(0) = x_0, \quad z(0) = z_0 \quad \text{and}\quad \theta(0) = \theta_0,$$
    such that $(x_0, z_0, \theta_0)$  are arbitrarily close to $(x_0, a, \frac{\pi}{2})$. We will prove that $x'<0$, $z' > 0$ and both $x$ and $z$ are unbounded.

   \noindent \textbf{Claim 1}: $\theta_0 > \frac{\pi}{2}.$
   
Since $f_0<0$, the integral equation gives
$$
g(y'(z))
=
e^{-\varphi(z)}
\int_a^z e^{\varphi(t)}f_0(y(t),y'(t))\,dt<0
$$
for $z>a$ sufficiently close to $a$. As $g$ preserves
the sign, we have $y'(z)<0$. Choosing the orientation
so that $z'>0$ and $0<\theta_0<\pi$, the relation
$$
y'(z_0)=\cot\theta_0
$$
implies that $\theta_0>\pi/2$

\noindent \textbf{Claim 2}: The function $x$ decreases, $z$ increases, the solution is defined for all positive time, and
    $$\lim_{t \to \infty} x(t) = -\infty, \quad \lim_{t \to \infty} z(t) = \infty.$$

Note that $x'(0) < 0$ and $z'(0) > 0$. Suppose, by contradiction, there exists $t_0 > 0$ such that $x'(t_0) = 0$. Then, $x''(t_0) \geq  0$. However, 
$$ x''(t_0) = -\sin^2 \theta(t_0) = -1 < 0.$$
Analogously, we prove that $z' > 0$. By Proposition \ref{crescimento de z}, the solution is defined for all positive time. Finally, the result follows from Lemma \ref{lema 3} and Proposition \ref{prop x comportamento}.
\end{proof}

The figure \ref{sin mim par} illustrates the surface described in the previous theorem.

\begin{figure}[ht]
  \centering
  \begin{minipage}[b]{0.45\textwidth}\includegraphics[scale=0.75]{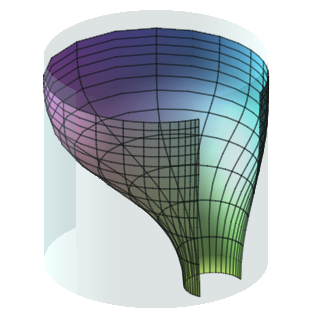} 
  \end{minipage}
  \hfill
  \begin{minipage}[b]{0.45\textwidth}\includegraphics[scale=0.5]{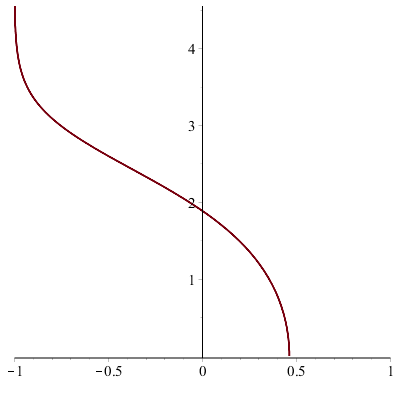}
  \end{minipage}
  \caption{$\varphi$-minimal for $\varphi(z) = \ln z$}\label{sin mim par}
\end{figure}

\subsection{Rotational surfaces without minimum at height}

Similar to the parabolic case, we define an operator $S$ and prove that it has a unique fixed point. This will be useful in proving the existence of a surface analogous to the one described in Theorem \ref{teo sin min parabolic}. However, in this case, we need to be careful since Equation \eqref{graph horizontal}, for $i = -1$, has a singularity at $y = 0$.

Let $\varepsilon > 0$. Define the set
$$\Omega_\varepsilon = \left\{ y \in C^1([a, a+\delta]) : |y(z)| \geq \varepsilon > 0, \ \forall z \in (a, a+\delta) \right\}.$$
Consider $I : \Omega_\varepsilon \to C^1([a, a + \delta])$ given by
$$I\left( y(s) \right)= e^{-\varphi(s)} \int_a^s e^{\varphi(t)} f_{-1}(y, y') \, dt.$$

The first step is to ensure that the operator $I$ is well defined and maps $\Omega_\varepsilon$ into a bounded set. This is necessary because $I$ appears in the definition of the operator $S_\varepsilon$, and we will need uniform bounds to apply the Banach fixed-point theorem later.

\begin{prop} \label{oper i-epsilon}
   There exists $\delta>0$ such that
$$
|I(y(s))|<1
$$
for every $y\in\Omega_\varepsilon$ and every
$s\in(a,a+\delta)$.
\end{prop}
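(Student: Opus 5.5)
We will mirror the proof of Proposition \ref{oper i}, the only new ingredient being a bound on $f_{-1}$ that is uniform over $\Omega_\varepsilon$. The plan is to bound the integrand, reduce everything to a single quantity depending only on $\varphi$, and then show that this quantity tends to $0$ as $s\to a^+$.

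\textbf{Step 1 (uniform bound on $f_{-1}$).} For $y\in\Omega_\varepsilon$ we have $|y(t)|\geq\varepsilon$ for every $t\in(a,a+\delta)$. Since $|\coth|$ is decreasing on $(0,\infty)$ and even in absolute value, this gives
\[
|f_{-1}(y(t),y'(t))| = \frac{|\coth y(t)|}{\sqrt{1+y'(t)^2}} \leq \coth\varepsilon .
\]
Set $M_\varepsilon:=\coth\varepsilon$. This bound does not depend on $y$, on $y'$, or on $\delta$.

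\textbf{Step 2 (reduction to a scalar limit).} Using Step 1,
\[
|I(y(s))| \leq M_\varepsilon\, e^{-\varphi(s)}\int_a^s e^{\varphi(t)}\,dt .
\]
The integral converges at $a$ because $\varphi(t)\to-\infty$ as $t\to a^+$ (Remark \ref{implicações hipotese 3}). Now apply L'Hôpital's rule exactly as in Proposition \ref{oper i}. Numerator and denominator both tend to $0$, and their derivatives are $e^{\varphi(s)}$ and $\dot\varphi(s)e^{\varphi(s)}$ respectively. Hence
\[
\lim_{s\to a^+} e^{-\varphi(s)}\int_a^s e^{\varphi(t)}\,dt = \lim_{s\to a^+}\frac{1}{\dot\varphi(s)} = 0,
\]
using $\dot\varphi(s)\to\infty$ from Remark \ref{implicações hipotese 3}.

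\textbf{Step 3 (choice of $\delta$).} By Step 2, we can choose $\delta>0$, depending only on $\varepsilon$ and $\varphi$, such that
\[
e^{-\varphi(s)}\int_a^s e^{\varphi(t)}\,dt < \frac{1}{M_\varepsilon} = \tanh\varepsilon \quad\text{for all } s\in(a,a+\delta).
\]
Combining with Step 2 yields $|I(y(s))|<1$ for all $y\in\Omega_\varepsilon$ and all such $s$.

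\textbf{Main obstacle.} The key point is that $\delta$ must be independent of $y$, and this is exactly why the restriction to $\Omega_\varepsilon$ is needed. Without it, $\coth y$ is unbounded near $y=0$ and no uniform bound is available. A minor subtlety is that $\Omega_\varepsilon$ is itself defined using $\delta$. This causes no circularity: shrinking $\delta$ only weakens the membership condition, and the bound in Step 1 holds on whatever interval is used.
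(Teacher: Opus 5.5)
Your proof is correct and follows essentially the same argument as the paper: the uniform bound $|f_{-1}|\leq\coth\varepsilon$ on $\Omega_\varepsilon$, the reduction to $e^{-\varphi(s)}\int_a^s e^{\varphi(t)}\,dt$, and L'H\^opital's rule giving the limit $\lim 1/\dot\varphi(s)=0$. Your explicit threshold $\tanh\varepsilon$ and the remark that $\delta$ depends only on $\varepsilon$ and $\varphi$ make precise what the paper leaves implicit in ``choosing $\delta>0$ sufficiently small.''
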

\begin{proof}
    Since $|y|\geq\varepsilon$ for $y\in\Omega_\varepsilon$,
we have
$$
|f_{-1}(y,y')|
=\frac{|\coth y|}{\sqrt{1+y'^2}}
\leq\coth\varepsilon.
$$
Therefore,
$$
|I(y(s))|
\leq
\coth\varepsilon\,
e^{-\varphi(s)}\int_a^s e^{\varphi(t)}\,dt.
$$
Since $\varphi(s)\to-\infty$ as $s\to a^+$,
L'Hôpital's rule and the hypotheses on $\varphi$ give
$$
\lim_{s\to a^+}
e^{-\varphi(s)}\int_a^s e^{\varphi(t)}\,dt
=
\lim_{s\to a^+}\frac{1}{\dot{\varphi}(s)}
=0.
$$
Thus, choosing $\delta>0$ sufficiently small, we obtain
$|I(y(s))|<1$ for every $y\in\Omega_\varepsilon$ and
$s\in(a,a+\delta)$.
\end{proof}

Define $S_\varepsilon: \Omega_\varepsilon \to C^1[(a, a + \delta)]$ by
$$S_\varepsilon\left( y(z) \right) = x_0 + \int_a^z g^{-1}\left((I \left( y(s)\right)\right) \, ds,$$
where $|x_0| > \varepsilon$. Thus, every fixed point of
$S_\varepsilon$ satisfies $y(a)=x_0$.
The previous proposition ensures that the operator $S_\varepsilon$ is well defined.

However, for the fixed-point argument, we must also establish that $S_\varepsilon$ maps the set $\Omega_\varepsilon$ into itself. This is the content of the following proposition.

\begin{prop} \label{imagem s-epsilon}
    For $\delta>0$ sufficiently small,
$S_\varepsilon(\Omega_\varepsilon)\subset\Omega_\varepsilon$.
\end{prop}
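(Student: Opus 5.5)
Fix $y\in\Omega_\varepsilon$ and write $w=S_\varepsilon(y)$. The task is to show $|w(z)|\ge\varepsilon$ on $(a,a+\delta)$, and that $w\in C^1([a,a+\delta])$. The plan is to control how far $w$ can drift from its initial value $x_0$ over the short interval $[a,a+\delta]$. Since $|x_0|>\varepsilon$, let $\eta:=|x_0|-\varepsilon>0$. It then suffices to find $\delta$, depending only on $\varepsilon,\eta$ and $\varphi$ and not on $y$, with
\[
|w(z)-x_0|\le \eta \quad\text{for all } z\in[a,a+\delta].
\]
Given this bound, the reverse triangle inequality yields $|w(z)|\ge |x_0|-\eta=\varepsilon$.

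\textbf{Step 1 (uniform bound on the integrand).} By Proposition \ref{oper i-epsilon}, for $\delta$ small we have $|I(y(s))|<1$, so $g^{-1}(I(y(s)))$ is defined. That alone does not bound it, because $g^{-1}(t)=t/\sqrt{1-t^2}$ blows up as $|t|\to1$. I will therefore sharpen the estimate from that proposition. Its proof gives
\[
|I(y(s))|\le \coth\varepsilon\; e^{-\varphi(s)}\int_a^s e^{\varphi(t)}\,dt=:\coth\varepsilon\; h(s),
\]
and $h(s)\to 0$ as $s\to a^+$ by L'Hôpital together with Remark \ref{implicações hipotese 3}. Shrinking $\delta$ if necessary, I arrange $\coth\varepsilon\, h(s)\le 1/2$ on $(a,a+\delta)$. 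Then $|g^{-1}(I(y(s)))|\le g^{-1}(1/2)=1/\sqrt3$ uniformly in $y\in\Omega_\varepsilon$.

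\textbf{Step 2 (drift estimate).} Integrating gives
\[
|w(z)-x_0|\le \int_a^z |g^{-1}(I(y(s)))|\,ds\le \frac{z-a}{\sqrt3}\le\frac{\delta}{\sqrt3}.
\]
Choosing $\delta\le\sqrt3\,\eta$, in addition to the constraint from Step 1, gives $|w|\ge\varepsilon$ on $[a,a+\delta]$.

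\textbf{Step 3 (regularity of $w$).} I expect this to be the main obstacle. I need $w\in C^1([a,a+\delta])$, so that $w'=g^{-1}(I(y))$ must be continuous up to $s=a$. For $s>a$ this is clear. At $s=a$ the bound $|I(y(s))|\le\coth\varepsilon\,h(s)\to0$ shows that $I(y(s))\to0$. Defining $I(y(a))=0$ therefore makes $w'$ continuous on the closed interval, with $w'(a)=0$. This also matches the lemma preceding the parabolic subsection, which forces $y'(a)=0$.

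The delicate point is that the choice of $\delta$ must be independent of $y$. This is why I use the bound $|f_{-1}|\le\coth\varepsilon$ instead of any bound depending on $y'$: it makes every constant depend only on $\varepsilon$, $|x_0|$ and $\varphi$. Taking $\delta$ to be the minimum of the values required in Steps 1 and 2, and of the value from Proposition \ref{oper i-epsilon}, finishes the argument.
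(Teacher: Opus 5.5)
Your proposal is correct and follows essentially the same argument as the paper: you use the uniform bound $|I(y(s))|\le 1/2$ from Proposition~\ref{oper i-epsilon} to get $|S_\varepsilon(y)(z)-x_0|\le \delta/\sqrt3$, and then shrink $\delta$ so that $\delta/\sqrt3\le |x_0|-\varepsilon$. Your Step 3, which checks that $S_\varepsilon(y)$ is $C^1$ up to $z=a$ with derivative $0$ there, covers a point the paper leaves implicit, and you handle it correctly.
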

\begin{proof}

By Proposition \ref{oper i-epsilon}, we may choose
$\delta>0$ such that $|I(y(s))|\leq1/2$ for every
$y\in\Omega_\varepsilon$ and $s\in[a,a+\delta]$.
Since $g^{-1}(t)=t/\sqrt{1-t^2}$, we obtain
$$
|S_\varepsilon(y(z))-x_0|
\leq\int_a^z
\left|g^{-1}\!\left(I(y(s))\right)\right|\,ds
\leq\frac{\delta}{\sqrt3}.
$$
Taking $\delta$ smaller if necessary so that
$|x_0|>\varepsilon+\delta/\sqrt3$, we conclude that
$$
|S_\varepsilon(y(z))|
\geq |x_0|-\frac{\delta}{\sqrt3}
>\varepsilon.
$$
Therefore, $S_\varepsilon(y)\in\Omega_\varepsilon$.

\end{proof}

 The strategy is to apply the Banach fixed-point theorem to the operator $S_\varepsilon$ restricted to the complete metric space $\Omega_\varepsilon$.  Proposition \ref{contraction} guarantees that $S_\varepsilon$ is a contraction, while Proposition \ref{imagem s-epsilon} ensures that it maps $\Omega_\varepsilon$ into itself.
 Therefore, the hypotheses of the fixed-point theorem are satisfied and the following theorem establishes the existence of a local solution of \eqref{graph horizontal}, $ i = -1$, near $z = a$.

\begin{theorem}\label{rot locally non-mim}
There exists a unique solution $y \in \Omega_\varepsilon$ of the integral equation
$$ y(z) = \int_a^z g^{-1}\!\left( e^{-\varphi(s)} \int_a^s f_{-1}(y(t), y'(t)) e^{\varphi(t)} dt \right) ds+ x_0, $$
where $|x_0|> \varepsilon$. Moreover, 
$$y'(a)=0,\qquad y''(a)=\frac{\coth x_0}{1+L}.$$
\end{theorem}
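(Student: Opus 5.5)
The plan mirrors the proof of Theorem \ref{parabolic locally non-mim}, with $f_0$ replaced by $f_{-1}$ and the ball replaced by $\Omega_\varepsilon$.

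\textbf{Existence and uniqueness.} Restrict to the closed subset
$$\Omega_\varepsilon\cap\{\|y-x_0\|_\infty\le\rho,\ \|y'\|_\infty\le\rho\}$$
of $C^1([a,a+\delta])$, with $\rho$ small so that $|y|\ge\varepsilon$ there; this set is complete. On it, $f_{-1}(s,t)=\coth s/\sqrt{1+t^2}$ is Lipschitz, because $s$ stays in $[x_0-\rho,x_0+\rho]$, away from $0$. By Proposition \ref{oper i-epsilon}, $|I(y)|\le 1/2$, so $g^{-1}$ is Lipschitz on the range of $I$. The Lipschitz estimate of Proposition \ref{contraction} then carries over verbatim: we get constant factors times
$$\int_a^z e^{-\varphi(s)}\int_a^s e^{\varphi(t)}\,dt\,ds.$$
For the derivative part we need $\sup_{s} e^{-\varphi(s)}\int_a^s e^{\varphi(t)}dt$, which tends to $0$ as $\delta\to0$ by the L'Hôpital computation in Proposition \ref{oper i-epsilon}. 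Hence $S_\varepsilon$ is a contraction in the $C^1$ norm for $\delta$ small.

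Self-mapping needs one more check beyond Proposition \ref{imagem s-epsilon}: we must also have $\|(S_\varepsilon y)'\|_\infty=\|g^{-1}(I(y))\|_\infty\le\rho$. This holds once $\delta$ is small, since $|I(y)|\le\coth(\varepsilon)\,e^{-\varphi}\int e^{\varphi}\to0$. Banach's fixed-point theorem then gives a unique fixed point $y$ with $y(a)=x_0$. Differentiating the integral equation shows that $y$ solves \eqref{degenerate system} for $z>a$.

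\textbf{Initial data.} The preceding lemma, which requires $x_0\ne0$, gives $y'(a)=0$. Alternatively, this follows directly from $I(y)(s)\to0$.

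\textbf{Second derivative.} Since $y(z)\to x_0$ and $y'(z)\to0$, we have $f_{-1}(y(z),y'(z))\to\coth x_0$. As in Theorem \ref{parabolic locally non-mim}, L'Hôpital's rule gives
$$\lim_{z\to a^+}\frac{g(y'(z))}{z-a}=\lim_{z\to a^+}\frac{f_{-1}(y,y')}{1+(z-a)\dot\varphi(z)}=\frac{\coth x_0}{1+L}.$$
When $L=\infty$ the limit is $0$, consistent with the formula. Since $g(t)\sim t$ near $0$, this yields
$$y''(a)=\lim_{z\to a^+}\frac{y'(z)}{z-a}=\frac{\coth x_0}{1+L}.$$

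For continuity of $y''$ at $a$, I would use \eqref{graph horizontal}, which gives $y''=(1+y'^2)(\coth y-\dot\varphi(z)y')$. The term $\dot\varphi(z)y'(z)$ equals $(z-a)\dot\varphi(z)\cdot y'(z)/(z-a)$, which tends to $L\coth x_0/(1+L)$. So $y''(z)\to\coth x_0\bigl(1-L/(1+L)\bigr)=\coth x_0/(1+L)$, as required (for $L=\infty$ one must bound $\dot\varphi\,y'$ directly).

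\textbf{Main obstacle.} I expect the delicate step to be the contraction estimate for the derivative component. We must control $e^{-\varphi(s)}\int_a^s e^{\varphi(t)}\bigl|f_{-1}(y,y')-f_{-1}(u,u')\bigr|dt$ uniformly and show that its coefficient is small. This rests on the decay $e^{-\varphi}\int e^{\varphi}\sim1/\dot\varphi\to0$ coming from (H3) and Remark \ref{implicações hipotese 3}, together with the Lipschitz bound for $\coth$ away from $0$ that $\Omega_\varepsilon$ provides.
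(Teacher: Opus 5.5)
Your proposal is correct and follows the paper's proof essentially step for step. Both arguments apply Banach's fixed-point theorem to $S_\varepsilon$ (the paper cites Propositions \ref{contraction} and \ref{imagem s-epsilon} directly on $\Omega_\varepsilon$), then obtain $y'(a)=0$ from the preceding lemma, and finally compute $y''(a)$ by the same L'H\^opital argument using $f_{-1}(y,y')\to\coth x_0$.

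Your additions fill in details the paper leaves implicit: the derivative component of the $C^1$ contraction estimate, which needs $e^{-\varphi(s)}\int_a^s e^{\varphi(t)}\,dt\to 0$ rather than mere boundedness, and the continuity of $y''$ when $L<\infty$. Your bounds on $S_\varepsilon y-x_0$ and $(S_\varepsilon y)'$ are uniform over $\Omega_\varepsilon$, so every fixed point in $\Omega_\varepsilon$ already lies in your smaller ball. Hence uniqueness holds in all of $\Omega_\varepsilon$, as stated.
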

\begin{proof}
    By the same argument as in Proposition \ref{contraction},
$S_\varepsilon$ is a contraction for $\delta>0$ sufficiently
small. Since $\Omega_\varepsilon$ is complete and
$S_\varepsilon(\Omega_\varepsilon)\subset\Omega_\varepsilon$
by Proposition \ref{imagem s-epsilon}, the Banach fixed-point
theorem gives a unique fixed point $y\in\Omega_\varepsilon$.

The integral equation gives $y(a)=x_0$. Differentiating it
for $z>a$, we obtain
$$
g(y'(z))
=
e^{-\varphi(z)}
\int_a^z e^{\varphi(t)}
f_{-1}(y(t),y'(t))\,dt.
$$
Differentiating once more, we recover
\eqref{degenerate system}. By the preceding lemma,
$y'(a)=0$.

Since $y(a)=x_0\neq0$ and $y'(a)=0$, we have
$$
\lim_{z\to a^+}f_{-1}(y(z),y'(z))=\coth x_0.
$$
Thus, by L'Hôpital's rule,
\begin{align*}
\lim_{z\to a^+}\frac{g(y'(z))}{z-a}
&=
\lim_{z\to a^+}
\frac{\displaystyle\int_a^z e^{\varphi(t)}
f_{-1}(y(t),y'(t))\,dt}
{(z-a)e^{\varphi(z)}}\\
&=
\lim_{z\to a^+}
\frac{f_{-1}(y(z),y'(z))}
{1+(z-a)\dot{\varphi}(z)}
=\frac{\coth x_0}{1+L}.
\end{align*}
Finally, using the definition of $g$ and $y'(a)=0$,
we conclude that
\begin{align*}
  \frac{\coth x_0}{1 + L} &=\lim_{z\to a^+} \frac{g\left( y'(z) \right)}{z -a} \\
    & =   \lim_{z\to a^+}\frac{y'(z)}{z-a}  = y''(a).
\end{align*}

Therefore, $y'$ is differentiable at $a$ and
$y''(a)=\coth x_0/(1+L)$.
\end{proof}

\begin{theorem}\label{teo sin min rot}

For every $x_0\in\mathbb{R}\setminus\{0\}$, there exists a
rotationally invariant surface $\Sigma$ in
$\mathbb H^2\times(a,\infty)$, regular up to its boundary, with
$(\cosh(x_0),0,\sinh(x_0),a)\in\partial\Sigma$.
The height function does not attain a minimum in the
interior of $\Sigma$. Moreover, both the distance
function and the height function are unbounded.
\end{theorem}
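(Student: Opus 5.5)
We follow the proof of Theorem \ref{teo sin min parabolic}, replacing its parabolic ingredients by their rotational counterparts. By the symmetry $(x,z,\theta)\mapsto(-x,z,\pi-\theta)$ of \eqref{prob 1}, which corresponds to reflecting across the vertical plane over the geodesic $\alpha$, it is enough to treat $x_0>0$.

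\textbf{Local solution near the boundary.} Fix $0<\varepsilon<x_0$. Theorem \ref{rot locally non-mim} gives a unique $y\in\Omega_\varepsilon\cap C^2([a,a+\delta])$ solving \eqref{graph horizontal} with $i=-1$, $y(a)=x_0$, $y'(a)=0$ and $y''(a)=\coth x_0/(1+L)$. Since $y$ is $C^2$ up to $z=a$, the horizontal graph $x=y(z)$ is regular up to its boundary. That boundary is the geodesic circle at height $a$ through $(\cosh x_0,0,\sinh x_0,a)$, and the surface meets it vertically.

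\textbf{Sign of the initial angle.} Next we pass to the profile curve. Pick $z_0\in(a,a+\delta)$ and set $x(0)=y(z_0)$, $z(0)=z_0$ and $\theta_0$ with $\cot\theta_0=y'(z_0)$, oriented so that $z'(0)>0$. This triple is arbitrarily close to $(x_0,a,\pi/2)$. We claim $0<\theta_0<\pi/2$. Because $y\geq\varepsilon>0$, we have $f_{-1}(y,y')=\coth y/\sqrt{1+y'^2}>0$. The integral identity then gives
$$g(y'(z))=e^{-\varphi(z)}\int_a^z e^{\varphi(t)}f_{-1}(y(t),y'(t))\,dt>0,$$
so $y'(z_0)>0$ and hence $\theta_0\in(0,\pi/2)$. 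This is the opposite sign to the parabolic Claim 1, where $f_0<0$.

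\textbf{Behaviour for positive time.}
\begin{itemize}
\item By Lemma \ref{crescimento de z} with $x(0)>0$ and $0<\theta_0<\pi/2$, the height $z$ is strictly increasing and $x'>0$ for all $t\ge0$. Hence $x\ge x(0)>0$, so $\liminf|x|>0$.
\item Proposition \ref{maximal interval} in the rotational case then shows the solution exists for all positive time.
\item We then rerun the proof of Lemma \ref{lemma 1 rot}. That argument only uses $0<\theta<\pi/2$, $x$ increasing and positive, $\coth x\ge1$, and the monotonicity of $\dot\varphi(z)$. It yields $x\to\infty$, $z\to\infty$ and $\theta\to0$.
\end{itemize}
Thus the distance function $|x|$ to the axis and the height are unbounded. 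The surface is a vertical graph over an annular region outside the circle of radius $x_0$, and also a horizontal graph $x=y(z)$.

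\textbf{No interior minimum.} Along the entire profile curve, extended back to $z=a$ through the graph $y$, the height is strictly increasing from $a$. So $\inf z=a$ is reached only on $\partial\Sigma$, and no interior minimum exists.

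\textbf{Main obstacle.} The delicate point is carrying Lemma \ref{lemma 1 rot} over from the initial condition $\theta(0)=0$ to $\theta_0\in(0,\pi/2)$. I would check each estimate there, in particular the integration-by-parts bound showing $z$ is unbounded, and confirm it only needs $x(t_0)>0$ and $x'>0$. A smaller point is that the local solution and the ODE solution must glue consistently; this follows from uniqueness for \eqref{prob 1} at the regular point $z_0>a$, where $x_0\neq0$ avoids the singularity of $\coth$.
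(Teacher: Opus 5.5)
Your proposal is correct and follows the paper's proof: the local solution comes from Theorem~\ref{rot locally non-mim}, one shows $\theta_0<\pi/2$, and the positive-time behaviour then comes from Lemma~\ref{lemma 1 rot}. The differences are minor. You obtain $\theta_0<\pi/2$ directly from the positivity of $g(y'(z))=e^{-\varphi(z)}\int_a^z e^{\varphi(t)}f_{-1}(y,y')\,dt$, the rotational analogue of the paper's parabolic Claim~1, whereas the paper argues by contradiction through the sign of $\theta'$. You also check explicitly, via Lemma~\ref{crescimento de z} and Proposition~\ref{maximal interval}, that Lemma~\ref{lemma 1 rot}, which is stated for $\theta(0)=0$, still applies when $\theta_0\in(0,\pi/2)$; the paper uses this without comment.
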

\begin{proof}
   The local existence of such a surface is given by the previous theorem. The solution can then be extended to its maximal interval of existence. Now, writing $y(z) = x(s)$, where $x$ and $z$ are solutions of \eqref{system 1} with initial conditions
    \begin{equation*}
        x(0) = x_0 > 0, \quad z(0) = z_0, \quad \theta(0) = \theta_0,
    \end{equation*}
    where $z_0$ and $\theta_0$ are arbitrarily close to $a$ and $\frac{\pi}{2}$, respectively. We will prove that $\theta_0 < \frac{\pi}{2}$.

    Suppose, by contradiction, that $ \frac{\pi}{2} < \theta_ 0  < \pi$. In this case, $\theta$ increases near $t = 0$ and $\cos \theta_0 < 0$ and $\sin \theta_0 > 0$. However, by the equation \eqref{system 1},
    $$ \cos \theta_0 \dot{\varphi}(z) > \sin \theta_0 \coth x_0 > 0,$$
    which is a contradiction. Therefore, by Lemma \ref{lemma 1 rot},  the solution is defined for all positive time, both functions $x$ and $z$ increase and 
    $$ \lim_{t \to \infty} x(t) = \infty \quad\text{and} \quad \lim_{t \to \infty} z(t) = \infty.$$
\end{proof}

The Figure \ref{fig sin min rot} illustrates a $\varphi$-minimal surface described in Theorem \ref{teo sin min rot}.

\begin{figure}[ht]
    \centering
    \begin{subfigure}[b]{0.50\textwidth}     \centering
        \includegraphics[width=\textwidth]{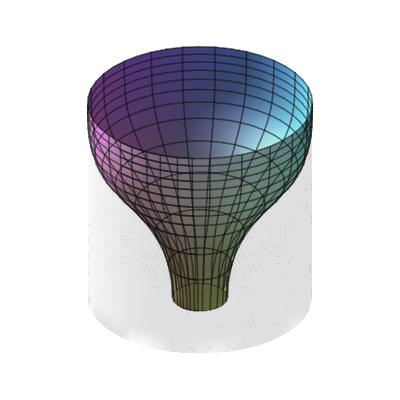} 
        \end{subfigure}
    \hfill 
    \begin{subfigure}[b]{0.40\textwidth} 
        \centering
        \includegraphics[width=\textwidth]{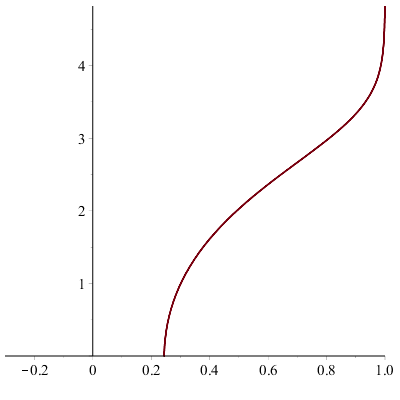} 
    \end{subfigure}
    \caption{$\varphi$-minimal for $\varphi(z) = \ln z$.}
    \label{fig sin min rot}
\end{figure}

\section{Classification of rotationally and parabolically invariant \texorpdfstring{$\varphi$-minimal}{phi-minimal} surfaces} \label{classification-section}

In the preceding sections, we constructed and analysed several families of $\varphi$-minimal surfaces invariant under parabolic translations or rotations of $\mathbb{H}^2$, namely: parabolic winglike surfaces (Theorem \ref{teo parabolic com minimo}), rotational Bowls (Theorem \ref{teo bowl rot}), rotational Winglike surfaces (Theorem \ref{rotatinal winglike}), and surfaces without a minimum point at height, both parabolic (Theorem \ref{teo sin min parabolic}) and rotational (Theorem \ref{teo sin min rot}). We now show that these examples exhaust all possible symmetric $\varphi$-minimal surfaces, invariant under parabolic translation and rotation, and under the hypotheses on $\varphi$ stated in Section \ref{preliminar}.

The following theorem is the main classification result of this work.

\begin{theorem}[Classification in the parabolic and rotational cases]
\label{teo:classificacao}
Let $\varphi$ be a connected, properly embedded $\varphi$-minimal surface in $\mathbb{H}^2\times\mathbb{R}$,  invariant under either a one-parameter group of parabolic translations or a one-parameter group of rotations of
$\mathbb{H}^2$, where $\varphi : (a,\infty) \to \mathbb{R}$ satisfies $\dot{\varphi} > 0$ and $\ddot{\varphi} < 0$. Then, $\Sigma$ is, up to an ambient isometry, one of the following:
\begin{itemize}
    \item a parabolic winglike surface;
    \item a parabolic horizontal graph: invariant under parabolic translations without a minimum point at height (Theorem \ref{teo sin min parabolic});
    \item a rotational bowl (Theorem \ref{teo bowl rot});
    \item a rotational winglike surface(Theorem \ref{rotatinal winglike});
    \item a rotational horizontal graph: invariant under rotations, without a minimum point at height (Theorem \ref{teo sin min rot}). 
\end{itemize}
Each of the surfaces listed above exists and is properly embedded, complete, and satisfies the $\varphi$-minimal surface equation.
\end{theorem}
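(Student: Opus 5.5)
The plan is to reduce everything to the profile curve and then split by the behaviour of the height. Let $\Sigma$ be such a surface, invariant under parabolic translations or rotations. Up to an ambient isometry the orbits are horocycles or geodesic circles, so $\Sigma$ is generated by a profile curve $\gamma$ as in \eqref{profile curve}. Its coordinates $(x,z,\theta)$ solve \eqref{system 1} with $i=0$ or $i=-1$ on a maximal interval; connectedness of $\Sigma$ makes $\gamma$ a single trajectory. The proof then looks at the height function $z$ along $\gamma$. By Lemma \ref{crescimento de z}, applied in forward and (after the substitution $t\mapsto -t$, $\theta\mapsto\theta+\pi$) backward time, every critical point of $z$ is a strict minimum. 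Hence $z$ has at most one critical point, and there are two cases.

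\textbf{Case 1: $z$ has an interior minimum.} At that point $\theta=0$ (or $\pi$, which the symmetry $(x,z,\theta)\mapsto(-x,z,\pi-\theta)$ reduces to $0$), so after a translation in $t$ we have initial data $(x_0,z_0,0)$.
\begin{itemize}
\item Parabolic case: Lemma \ref{lema 3} and Propositions \ref{prop x comportamento} and \ref{tempo negativo prop} describe both halves of the curve, so we obtain the parabolic winglike surface of Theorem \ref{teo parabolic com minimo}.
\item Rotational case with $x_0=0$: the curve meets the axis. Lemma \ref{y0=0} and Proposition \ref{prop existencia} show that the data at the axis must be $u'(0)=0$ and that the solution is unique. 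So $\Sigma$ is the rotational bowl of Theorem \ref{teo bowl rot}.
\item Rotational case with $x_0\neq0$: reflecting if needed we take $x_0>0$. Lemma \ref{lemma 1 rot} and Proposition \ref{prop:rot-tempo-negativo-prop} give the winglike surface of Theorem \ref{rotatinal winglike}.
\end{itemize}
In each subcase we record that both ends of $\gamma$ have $z\to\infty$ and $|x|\to\infty$. This yields properness, and monotonicity of $x$ on each branch yields embeddedness.

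\textbf{Case 2: $z$ has no critical point.} Then $z$ is strictly monotone along $\gamma$, and orienting so that $z$ increases we have $\theta\in(0,\pi)$ throughout. In forward time the previous results show $z\to\infty$. The key step, and the main obstacle, is backward time: one must show that $z$ decreases to $a$. Suppose instead that $z\to z_*>a$ as $t\to t_{\min}$. The plan is to run the argument of Proposition \ref{maximal interval} backward to get $t_{\min}=-\infty$, using that $|x|$ stays bounded away from $0$ in the rotational case. Then $\theta$ has a limit $\theta_*$, and $\lim\theta'=0$ forces
\[
\dot\varphi(z_*)\cos\theta_*=(\tanh x_*)^i\sin\theta_*.
\]
This has to be combined with $z'\to0$, which forces $\sin\theta_*=0$, and a sign comparison in the style of Lemma \ref{lema 3} should give the contradiction. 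The delicate subcase is when the curve approaches the rotation axis without $z$ reaching a critical value. Here Lemma \ref{y0=0} applies: it forces a horizontal tangent at the axis, hence a critical point of $z$, a contradiction.

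Once $z\to a$ is established, write $\gamma$ near $z=a$ as a horizontal graph $x=y(z)$. Since $\theta\to\pi/2$, $y'$ has a limit, and the lemma preceding Theorem \ref{parabolic locally non-mim} gives $y'(a)=0$. Uniqueness of the fixed points in Theorem \ref{parabolic locally non-mim} and Theorem \ref{rot locally non-mim} identifies $\Sigma$ with the surface of Theorem \ref{teo sin min parabolic} or Theorem \ref{teo sin min rot}. In the rotational case the possibility $x_0=0$ must be excluded; the plan is to do this using Lemma \ref{y0=0} together with the singularity of $\coth$ at $0$.

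Finally, properness of $\Sigma$ rules out any remaining behaviour, such as bounded ends or accumulation, because all limits computed above are infinite or equal to $a$. Existence, properness and completeness of each model surface were already obtained in the cited theorems.
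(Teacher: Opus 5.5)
Your decomposition is the same as the paper's: either the height has an interior minimum or it does not, and in the first case the surface is a bowl or a winglike according to whether the profile meets the axis. Case~1 is fine apart from two slips.
\begin{itemize}
\item In the parabolic case, $(x,z,\theta)\mapsto(-x,z,\pi-\theta)$ is not a symmetry of \eqref{ode parabolic 2}, because it flips the sign of the $\sin\theta$ term. Reduce $\theta=\pi$ to $\theta=0$ by the time reversal $t\mapsto -t$, $\theta\mapsto\theta+\pi$ instead.
\item Monotonicity of $x$ on each branch does not give embeddedness of a winglike profile, since the two branches could cross.
\end{itemize}
The genuine gap is in Case~2, where you plan to exclude $x_0=0$ in the rotational case. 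This is a profile curve running into the corner $(x,z)=(0,a)$ of the orbit space, and neither tool you name reaches it. Lemma~\ref{y0=0} assumes the limit height lies in $(a,\infty)$, and its proof needs $\dot\varphi(u)$ bounded near $r=0$. The lemma preceding Theorem~\ref{parabolic locally non-mim} needs $f_{-1}=\coth y/\sqrt{1+y'^2}$ bounded, which fails as $y\to 0$. At the corner the two singular terms of \eqref{graph horizontal} can balance. For $y\sim c(z-a)$ one gets $\coth y-\dot\varphi(z)y'\approx (1/c-Lc)/(z-a)$, which vanishes for $c=1/\sqrt{L}$.

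In fact this step is false as the statement stands. Take $\varphi(z)=\log(z-a)$, so $L=1$, and blow up the corner by $x=\rho\cos\psi$, $z-a=\rho\sin\psi$, $dt=\rho\,d\tau$. System \eqref{prob 1} becomes
\[
\frac{d\rho}{d\tau}=\rho\cos(\theta-\psi),\qquad
\frac{d\psi}{d\tau}=\sin(\theta-\psi),\qquad
\frac{d\theta}{d\tau}=\frac{\cos\theta}{\sin\psi}-\frac{\sin\theta}{\cos\psi}-\rho\sin\theta\,h(\rho\cos\psi),
\]
with $h(s)=\coth s-1/s$ analytic, so the field is analytic up to $\rho=0$. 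The point $(\rho,\psi,\theta)=(0,\pi/4,\pi/4)$ is hyperbolic. It has eigenvalue $1$ in the $\rho$-direction, and the roots of $\lambda^2+3\lambda+4=0$ (negative real part) in the invariant plane $\{\rho=0\}$.

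The branch $\rho>0$ of its unstable manifold is a genuine profile curve leaving the vertex $(P,a)$ at angle $\pi/4$. By Lemma~\ref{crescimento de z}, Proposition~\ref{maximal interval} and the argument of Lemma~\ref{lemma 1 rot}, it continues with $x,z\to\infty$. It generates a rotational $\varphi$-minimal graph over $\mathbb{H}^2\setminus\{P\}$ with the following properties:
\begin{itemize}
\item it is properly embedded in $\mathbb{H}^2\times(a,\infty)$, in the same sense as the surfaces of Theorem~\ref{teo sin min rot};
\item its height has no interior minimum;
\item it is asymptotic at $(P,a)$ to a cone, the analogue of the Euclidean singular minimal cone $z=\sqrt{\alpha}\,r$.
\end{itemize}
It is not produced by Theorem~\ref{teo sin min rot}, whose surfaces have $x_0\neq 0$ and reach height $a$ vertically. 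So this case cannot be argued away. It must be added to the list, or excluded by an extra hypothesis such as regularity of $\overline{\Sigma}$ along $\{z=a\}$. The paper's own proof has the same gap, since it simply asserts that absence of a minimum leads to the surfaces of Section~\ref{sin minimo section}.

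The rest of your Case~2 can be completed, though not quite as written.
\begin{itemize}
\item As $t\to t_{\min}$, $\theta$ converges. In the parabolic case its critical points are strict minima; in the rotational case use sequences of critical points as in Lemma~\ref{lemma 1 rot}.
\item The limit conditions $\sin\theta_*=0$ and $\dot\varphi(z_*)\cos\theta_*=0$ are then already incompatible, so no sign comparison is needed.
\item Near $z=a$, with $y$ bounded away from $0$, the integral identity in that lemma uses only the boundedness of $f_i$. It therefore yields $y'\to 0$ without first knowing that $\theta$ has a limit.
\end{itemize}
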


\begin{proof}
The existence of each family follows from Theorems \ref{teo parabolic com minimo}, \ref{teo sin min parabolic}, \ref{teo bowl rot}, \ref{rotatinal winglike}, and \ref{teo sin min rot}. To prove that these are the only possibilities, observe that a symmetric $\varphi$-minimal surface is completely determined by its profile curve, which is a solution of the ODE system \eqref{system 1}. The qualitative analysis of these solutions, carried out in Sections \ref{parabolic section} through \ref{sin minimo section}, shows that the height function $z$ either attains a minimum point or does not. If it does, the surface is of bowl- or wing-like type, according to whether the profile curve intersects the axis of symmetry or not. If it does not, the surface is a horizontal graph defined on a horocycle or a geodesic cycle, depending on the symmetry. This exhausts all possible behaviours of the solutions of \eqref{system 1}, for $i = 0, -1$, and hence all symmetric $\varphi$-minimal surfaces.
\end{proof}

\section{Non-existence theorems for \texorpdfstring{$\varphi$-minimal symmetric}{phi-minimal} surfaces} \label{non existence section}

We now establish several non-existence results for $\varphi$-minimal surfaces in $\mathbb{H}^2 \times \mathbb{R}$. These results are inspired by analogous theorems for translating solitons in Riemannian products \cite{de2019translating} and in $\mathbb{H}^2 \times \mathbb{R}$ \cite{bueno2018translating}. They show that, under mild geometric constraints, no $\varphi$-minimal surface can exist.

We show that a properly immersed $\varphi$-minimal surface cannot be contained in a vertical cylinder. The proof uses the rotational winglike surfaces constructed in Theorem \ref{rotatinal winglike} as barriers and the tangency principle (Theorem \ref{thm:tangency-principle}).

\begin{theorem} \label{teo no compact into a cylinder}
    There exists no properly immersed $\varphi$-minimal surface in $\mathbb{H}^2\times\mathbb{R}$ that is contained within a vertical cylinder $B \times \mathbb{R}$, where $B$ is a closed curve in $\mathbb{H}^2$.
\end{theorem}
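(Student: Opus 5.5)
We argue by contradiction, following the strategy announced just before the statement: the rotational winglike surfaces of Theorem \ref{rotatinal winglike} serve as barriers and the Tangency Principle (Theorem \ref{thm:tangency-principle}) produces the contradiction. Suppose $\Sigma$ is a properly immersed $\varphi$-minimal surface contained in $B\times\mathbb{R}$, which we read as the solid cylinder over the compact region bounded by $B$. Since $B$ is compact, there is a point $p\in\mathbb{H}^2$ and a radius $R>0$ with $B\times\mathbb{R}\subset D_R(p)\times\mathbb{R}$, where $D_R(p)$ is the closed geodesic disk. Up to a horizontal isometry, $p$ is the rotation axis used in \eqref{parametrizacao rot}.

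\textbf{Step 1: the barrier.} Take the rotational winglike $W_r$ with neck radius $r>R$, i.e. the solution of \eqref{prob 1} with initial data $x(0)=r$, $\theta(0)=0$ and $z(0)=z_0$. By Lemma \ref{lemma 1 rot} and Proposition \ref{prop:rot-tempo-negativo-prop}, its profile satisfies $x\geq r$ with equality only at the neck, so $W_r$ lies outside the cylinder of radius $r$. In particular $W_r$ is disjoint from $D_R(p)\times\mathbb{R}$, hence from $\Sigma$. Both branches of $W_r$ are graphs over $\{d(\cdot,p)>r\}$ going to $+\infty$ in height, and the neck circle has height $z_0$, which is the minimum height. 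The winglike therefore separates the region ``inside and below'' from the rest. The family depends continuously on $(r,z_0)$, and all of its members are $\varphi$-minimal.

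\textbf{Step 2: first contact.} We deform the family so that it sweeps in and must touch $\Sigma$. Decrease the neck radius $r$ continuously from a value larger than $R$ toward $0$, keeping $z_0$ fixed at a height attained by $\Sigma$. As $r\to 0$, the winglike $W_r$ must converge to the bowl of Theorem \ref{teo bowl rot}, with neck height $z_0$, and this bowl meets the axis. So for small $r$ the neck circle passes through the interior of $D_R(p)$ at height $z_0$. Since $\Sigma$ is proper and confined to the cylinder, its height is unbounded in at least one direction, or it is compact; in either case it cannot avoid such a barrier. Let $r_*$ be the supremum of those $r$ for which $W_r$ meets $\Sigma$. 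Properness of $\Sigma$, together with the fact that the part of $W_r$ over the compact disk is a compact piece, makes this a closed condition. At $r_*$ we get a first contact point $q$ where $\Sigma$ lies locally on one side of $W_{r_*}$ and the normals coincide. Because $\Sigma$ is contained in the cylinder, $q$ is an interior point of both surfaces.

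\textbf{Step 3: conclusion.} The Tangency Principle gives that $\Sigma$ and $W_{r_*}$ coincide near $q$. By analytic continuation (connectedness and uniqueness for \eqref{prob 1}), $\Sigma$ then contains an open piece of $W_{r_*}$ that is unbounded horizontally, which contradicts $\Sigma\subset B\times\mathbb{R}$.

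The main obstacle is Step 2. I need the monotone sweep to actually reach $\Sigma$ and the first contact to occur at finite height. Concretely, this requires knowing that $W_r$ converges to the bowl as $r\to0$, by continuous dependence on initial data. It also requires handling the case where $\Sigma$ escapes downward toward $z=a$. If that causes trouble, I would instead lower $z_0$ toward $a$ while keeping $r$ fixed, and exploit the fact that $\dot\varphi\to\infty$ forces the winglike's inner branch to become nearly horizontal near the neck.
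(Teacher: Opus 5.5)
Your strategy is the paper's: rotational winglikes as barriers, a first contact, then Theorem~\ref{thm:tangency-principle}. But Step~2, which you flag yourself, is a genuine gap, and Step~1 contains an error that feeds into it.

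\textbf{Step 1.} With data $x(0)=r$, $\theta(0)=0$, the circle of radius $r$ at height $z_0$ is the \emph{lowest} circle of the winglike, not its neck. By Proposition~\ref{prop:rot-tempo-negativo-prop}, the other branch starts with $\tilde x'(0)=-1$ and moves inward. It reaches the neck, where $\tilde\theta=\pi/2$, at radius $\tilde x(t_0)<r$ and at a height above $z_0$. So ``$x\ge r$'' is false, and $r>R$ does not make $W_r$ disjoint from $D_R(p)\times\mathbb{R}$. You must index the family by the neck radius.

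\textbf{Step 2.} Nothing here forces $W_r$ to meet $\Sigma$. The convergence $W_r\to$ bowl as $r\to0$ is not proved in the paper. It also does not follow from continuous dependence, since the limiting data $x=0$ sit on the singularity of $\coth x$. Even granting it, knowing that the neck enters $D_R(p)$ says nothing about $\Sigma$. The part of $W_r$ over $D_R(p)$ is a compact annulus in a bounded slab, degenerating to a bowl cap in your limit. A priori $\Sigma$ can pass above or around it, so the set defining $r_*$ may be empty. ``It cannot avoid such a barrier'' is exactly what needs proof.

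The paper's written argument has the same hole in a starker form. It takes a first tangency at some $r_1>r_0$, but each $\mathcal{C}_r$ with $r>r_0$ lies outside the cylinder of radius $r$ and never touches $M$. You are right that the neck must go below $R$, but that alone does not produce a contact.

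\textbf{What is missing.} You need a reason for $\Sigma$ to hit some barrier, and the height function supplies it.
\begin{itemize}
\item Since $\Delta h=\dot\varphi\,\nu^2\ge0$, $h$ has no interior maximum. If $\sup_\Sigma h<\infty$, properness inside the cylinder would make $\Sigma\cap\{c\le h\le \sup h\}$ compact, so the supremum would be attained. Hence $h$ is unbounded above on $\Sigma$.
\item Pick $q\in\Sigma$ off the axis, and a winglike whose lowest circle lies just below $q$: radius $d(q,p)$, height $h(q)-\epsilon$. Then $q$ lies in the region $A$ enclosed between the two wings.
\item $A\cap(D_R(p)\times\mathbb{R})$ is bounded above, so connectedness of $\Sigma$ forces $\Sigma\cap W\neq\emptyset$.
\item Now move the data $(x_0,z_0)$, with $x_0>0$, continuously to a winglike whose neck radius exceeds $R$; that one is disjoint from $\Sigma$.
\item The last parameter with contact gives an interior one-sided tangency. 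Compactness holds because all these barriers over $D_R(p)$ stay in a fixed slab $[c,d]$ with $c>a$, and $\Sigma\cap(D_R(p)\times[c,d])$ is compact.
\end{itemize}
You still have to justify that such a path exists: a winglike with neck radius greater than $R$, and continuity of the compact pieces in the data.

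\textbf{Step 3.} ``Uniqueness for \eqref{prob 1}'' should be replaced by unique continuation for the $\varphi$-minimal equation, since $\Sigma$ is not assumed rotational.
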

\begin{proof}
Assume, by contradiction, that 
the $\varphi$-minimal surface $M$ is contained in a vertical cylinder of radius $r_0$. Consider a family of rotational winglike  $\{\mathcal{C}_{r}\}$, with $r > r_0$, described in Theorem \ref{rotatinal winglike}. For every $r>r_0$, each $\mathcal{C}_r$ lies in the non-compact region of $\mathbb{H}^2\times\mathbb{R}$.
We now decrease $r$ to $r_1 > r_0$, where $r_1$ is the smallest radius for which $\mathcal{C}_r$  becomes tangent to $M$ at an interior point. By the tangency principle, $M$
 must coincide with $\mathcal{C}_{r_1}$. However, this contradicts the assumption that 
$M$ is contained inside the cylinder.
\end{proof}

Moreover, analogously to Theorem 14 of \cite{de2019translating}, we prove the following result with a similar strategy, but now using the parabolic winglike as barriers. 
\begin{theorem}\label{teo no compact into a horocylinder}
    There are no complete $\varphi$-minimal surface with compact  boundary, properly embedded in a solid horocylinder in $\mathbb{H}^2\times \mathbb{R}$.
\end{theorem}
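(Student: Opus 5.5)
We argue by contradiction, using the parabolic winglike surfaces of Theorem \ref{teo parabolic com minimo} as barriers together with the tangency principle (Theorem \ref{thm:tangency-principle}), in the spirit of Theorem 14 of \cite{de2019translating} and of the proof of Theorem \ref{teo no compact into a cylinder}.

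Suppose $M$ is a complete $\varphi$-minimal surface with compact boundary $\partial M$, properly embedded in a solid horocylinder $\mathcal{H}\times\mathbb{R}$. Up to an ambient isometry, $\mathcal{H}$ is the horodisk bounded by a horocycle centred at the fixed point $P=(1,0,1)$ of the parabolic group \eqref{eq:parabolic}. In the coordinate $x$ of \eqref{parametrization parabolic}, $\mathcal{H}$ is then a region $\{x\le x_1\}$ or $\{x\ge x_1\}$; by Proposition \ref{prop x comportamento}, the relevant case is the one where the barriers can escape to $x\to-\infty$ while the horodisk lies on the other side.

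By Lemma \ref{lema 3}, Proposition \ref{prop x comportamento} and Proposition \ref{tempo negativo prop}, each parabolic winglike surface $W_{c}$ has a profile curve with a unique maximum of $x$, say $x_{\max}(c)$. Its two wings satisfy $x\to-\infty$ and $z\to\infty$, and the whole curve lies in $\{x\le x_{\max}\}$. Varying the initial data $(x_0,z_0)$ gives a continuous family. Horizontal parabolic translations (which shift $x$) are isometries, and since $\varphi$ depends only on $z$, they preserve $\varphi$-minimality. So we obtain a one-parameter family $W_s$, obtained by shifting $x$ by $s$, with $\{x\le x_{\max}+s\}$ containing $W_s$, and $W_s$ disjoint from $M$ for $s$ very negative. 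This disjointness uses that $M\subset\mathcal{H}\times\mathbb{R}$ sits in a half-space of the form $\{x\ge x_1\}$.

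Increase $s$ to the first value $s_1$ at which $W_{s}$ meets $M\cup\partial M$. First choose $z_0$, the minimum height of the barrier, so that $W_s$ stays away from the compact set $\partial M$. This is the step I expect to be the main obstacle. Since $\partial M$ is compact, it lies below some height $h$. I would like a barrier whose wings at heights $\le h$ avoid $\partial M$, which means exploiting that $x$ is monotone on each wing. Alternatively, one can use the winglike curves only in the region $z>h$ and treat the portion of $M$ below height $h$ separately. For that part I would invoke Lemma \ref{crescimento de z}-type arguments on the height function: $M$ has no interior maximum of height, since $H=\dot\varphi\langle\eta,\partial_z\rangle$ and at an interior maximum this forces a contradiction with the mean curvature sign. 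Properness in the horocylinder is used to guarantee that the first contact happens at a finite point rather than at infinity. The limit is controlled because both $M$ and the barrier are proper and the barrier's wings escape the horocylinder as $x\to-\infty$.

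At the first contact point, which is an interior point of both surfaces with coinciding normals and $M$ lying on one side of $W_{s_1}$, the tangency principle gives that $M$ and $W_{s_1}$ coincide near the point. Then, by analyticity (or connectedness and a continuation argument), $M$ coincides with $W_{s_1}$ on its whole component. But $W_{s_1}$ is unbounded in $x\to-\infty$ and is therefore not contained in the solid horocylinder, and it has empty boundary, contradicting the compactness of $\partial M$ and the containment. Hence no such $M$ exists.
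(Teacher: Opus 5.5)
Your strategy is the one the paper indicates: it gives no proof beyond the pointer to Theorem 14 of \cite{de2019translating} with parabolic winglikes as barriers. Your setup is essentially right. Since the Lorentzian product gives $\langle\beta(x),(1,0,1)\rangle=-e^{-x}$, a solid horocylinder centred at the fixed point is always $\{x\ge x_1\}$. The $x$-shifts are hyperbolic translations along $\beta$, not parabolic ones, but they preserve $\varphi$-minimality because \eqref{ode parabolic 2} does not involve $x$. And every $W_s$ lies in $\{z\ge z_0\}$, so the barriers avoid $\partial M$ simply by taking $z_0>\max_{\partial M}z$; no monotonicity of the wings is needed. The gaps are in the two steps that carry the content of the theorem.

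\emph{First, you never show that a contact occurs.} Because $\bigcup_s W_s=\{z\ge z_0\}$, with this choice of $z_0$ the sweep meets $M$ only if $\sup_M z>\max_{\partial M}z$. ``No interior maximum of the height'' does not give this for non-compact $M$, since the supremum may only be approached at infinity. Your argument never uses that $M$ is non-compact, yet the statement is false for compact $M$: a small disc on a rotational bowl lies in some solid horocylinder. So an extra ingredient is needed exactly here, e.g.\ a proof that a non-compact such $M$ has unbounded height, or a separate argument for the part of $M$ in a slab. Lowering $z_0$ does not help. The first contact could then occur at a point of $\partial M$, where $M$ and $W_s$ need not even be tangent, and Theorem \ref{thm:tangency-principle} gives nothing.

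\emph{Second, the first contact may be at infinity.} $W_s\cap(\mathcal{H}\times\mathbb{R})$ is a union of entire parabolic orbits (horocycles), hence non-compact. Let $s_1=\inf\{s: W_s\cap M\neq\emptyset\}$ and $p_n\in W_{s_n}\cap M$ with $s_n\downarrow s_1$. Then $x(p_n)\ge x_1$, together with $x\to-\infty$ on both wings, confines the profile coordinates $(x(p_n),z(p_n))$ to a compact set. Nothing bounds the horocyclic coordinate, however. Properness of $M$ and of $W_s$ is compatible with $p_n$ escaping along the horocycles, i.e.\ with $M$ being asymptotic to $W_{s_1}$ without touching it, and then the tangency principle cannot be invoked. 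Your sentence that properness and the escaping wings ``control the limit'' does not address this direction.

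Closing this gap requires a further tool. One option is to apply the parabolic isometries that bring $p_n$ back to a fixed horocyclic position; they preserve $\mathcal{H}\times\mathbb{R}$ and $W_{s_1}$ and push $\partial M$ to infinity. One would then extract a limit $\varphi$-minimal surface touching $W_{s_1}$ at a finite point, which needs curvature or area estimates not available in the paper. Another option is a maximum principle at infinity.

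A minor point: $\varphi$ is only smooth, so replace ``analyticity'' by the open--closed continuation argument based on the tangency principle.
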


For the next result, we need to establish some relevant equations, which were first proven in \cite{martinez2022equilibrium} for $\varphi-$minimal surfaces in the Euclidean space. First, define the angle function by
$$\nu(p) = \langle \eta, \partial_z\rangle(p),\quad \forall p \in \Sigma.$$
As is well known, $\partial_z$ is a vertical, parallel vector field. Moreover, we can decompose it as
$$ \partial_z = \nabla h +  \nu \eta,$$
where $h : \Sigma \rightarrow \mathbb{R}$ is the height function with respect to $\mathbb{R}$. Since $\Sigma$ is a $\varphi$-minimal surface, we have
\begin{equation}\label{eq1}
    1 = ||\partial_z||^2 = ||\nabla h||^2+ \left(\frac{H}{\dot{\varphi}}\right)^2
\end{equation}
On the other hand, from the definition of the Hessian operator, for any tangent vectors $X$ and $Y$  in $\Sigma$, we get
\begin{align*}
    \nabla^2 h (X,Y) & = \langle \nabla_X \left(\partial_z\right)^T, Y \rangle \\
    & = \langle \widetilde{\nabla}_X\partial_z + \widetilde{\nabla}_X (\partial_z)^{\perp}, Y \rangle\\
    & = \langle \mathbf{A}(X,Y), \partial_z\rangle,
\end{align*}
where $\widetilde{\nabla}$ denotes the Riemannian connection and $\mathbf{A}$ is the second fundamental form.  Consequently,
\begin{equation}\label{eq2}
    \dot{\varphi} \nabla^2 h = \mathcal{A}H,
\end{equation}
where $\mathcal{A}(X,Y) = \langle \mathbf{A}(X,Y), \eta \rangle$.

Equation \eqref{eq2} provides interesting information on the height function of a $\varphi$-minimal surface $\Sigma$.

\begin{prop}
    If $\varphi: (a, \infty) \to \mathbb{R}$ satisfies $\dot{\varphi}>0$ on $(a,\infty)$, then the height function $h$ of a $\varphi$-minimal surface $\Sigma$ cannot attain a local maximum at any interior point.
\end{prop}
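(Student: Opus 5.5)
Suppose, to the contrary, that $h$ attains a local maximum at an interior point $p\in\Sigma$; the plan is to derive a contradiction by evaluating the mean curvature at $p$. Since $p$ is a critical point of $h$, we get $\nabla h(p)=0$. The decomposition $\partial_z=\nabla h+\nu\eta$ then gives $\partial_z=\nu(p)\eta(p)$ at $p$, and since $\partial_z$ is unit, $\nu(p)=\pm1$. Replacing $\eta$ by $-\eta$ if needed (the equation $H=\dot\varphi\langle\eta,\partial_z\rangle$ is invariant under this change, since $H$ flips sign too), we may assume $\nu(p)=1$. The $\varphi$-minimal equation then yields $H(p)=\dot\varphi(h(p))\,\nu(p)=\dot\varphi(h(p))>0$. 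This step uses only the hypothesis $\dot\varphi>0$; the identity \eqref{eq1} says the same thing in the form $(H/\dot\varphi)^2=1$ at $p$.

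Next, use the Hessian identity computed just before the statement: $\nabla^2h(X,Y)=\langle\mathbf A(X,Y),\partial_z\rangle=\nu\,\mathcal A(X,Y)$. At $p$ this gives $\nabla^2h=\mathcal A$. Taking the trace with respect to the induced metric gives $\Delta h(p)=\operatorname{tr}\mathcal A=H(p)$, with the same normalization of $H$ as in $H=\dot\varphi\nu$ (i.e.\ $\mathbf H=H\eta$). Equivalently, \eqref{eq2} at $p$ reads $\dot\varphi\nabla^2 h=\mathcal A H$, whose trace gives $\dot\varphi\,\Delta h=H^2$, and using $H=\dot\varphi$ this again gives $\Delta h(p)=\dot\varphi(h(p))>0$.

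At an interior local maximum the Hessian of $h$ is negative semidefinite, so $\Delta h(p)\le0$. This contradicts $\Delta h(p)=\dot\varphi(h(p))>0$, and the proposition follows. The same argument even shows that every interior critical point of $h$ has $\Delta h>0$, so it cannot be a maximum, although it could be a saddle.

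The only delicate point is keeping signs and normalizations consistent: whether $H$ is the trace or the half-trace of $\mathcal A$, and which orientation $\eta$ is used. The safest route is to work directly with the normal vector: $\Delta h=\langle\mathbf H,\partial_z\rangle$, where $\mathbf H$ is the trace of $\mathbf A$, and $\mathbf H=(\nabla\varphi)^\perp=\dot\varphi(\partial_z)^\perp$. This gives $\Delta h=\dot\varphi\,|\partial_z^\perp|^2=\dot\varphi\,\nu^2$ everywhere, which is orientation-free, and at $p$ it equals $\dot\varphi>0$. If $\mathbf H$ is instead defined as half the trace, the conclusion is unchanged up to a positive factor of $2$.
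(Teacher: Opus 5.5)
Your proof is correct and follows the paper's route: you trace the Hessian identity \eqref{eq2} and use \eqref{eq1} to get $\Delta h=\dot\varphi\,(1-\|\nabla h\|^2)=\dot\varphi\,\nu^2$, which contradicts $\Delta h\le 0$ at an interior maximum. Evaluating at the critical point, where $\nabla h=0$ forces $\nu^2=1$ and hence $\Delta h=\dot\varphi>0$, is the precise form of the paper's claim $\Delta h>0$; that claim in general only holds as $\Delta h\ge 0$, since $\Delta h$ vanishes wherever $\nu=0$.
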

\begin{proof}
    Let $\{E_i\}_i$ be a orthogonal frame of $T\Sigma$, then, by equations (\ref{eq1}) and (\ref{eq2}) we obtain that
    $$\Delta h = \sum_k \nabla^2(E_k, E_k) = \dot{\varphi} \left(1-||\nabla h ||^2\right)>0$$
    and the result follows. 
\end{proof}

Finally, we prove that there are no closed (compact without boundary) $\varphi$-minimal surfaces in $\mathbb{H}^2 \times \mathbb{R}$. This generalizes the corresponding result for translating solitons obtained in \cite{de2019translating}.

\begin{prop} \label{compact}
    If $\varphi: (a, \infty) \to \mathbb{R}$ satisfies $\dot{\varphi}>0$ on $(a,\infty)$, there does not exist a closed $\varphi$-minimal surface in $\mathbb{H}^2\times \mathbb{R}$.
\end{prop}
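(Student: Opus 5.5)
The plan is to argue by contradiction, reducing the statement to the previous proposition on the height function. Suppose $\Sigma$ is a closed $\varphi$-minimal surface in $\mathbb{H}^2\times\mathbb{R}$, necessarily contained in $\mathbb{H}^2\times(a,\infty)$, where $\varphi$ is defined. Since $\Sigma$ is compact and the height function $h:\Sigma\to\mathbb{R}$ is continuous, $h$ attains a global maximum at some point $p\in\Sigma$. Because $\Sigma$ has no boundary, $p$ is an interior point, and a global maximum is in particular a local maximum. The preceding proposition states that $h$ cannot attain a local maximum at any interior point when $\dot{\varphi}>0$. This contradiction gives the result.

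To make the argument self-contained and to check the key step, I would recall where the strict inequality comes from. From \eqref{eq2}, tracing over an orthonormal frame gives $\dot{\varphi}\,\Delta h = H^2$. Using $H=\dot{\varphi}\nu$ and \eqref{eq1}, this becomes
$$\Delta h=\dot{\varphi}\,\nu^2=\dot{\varphi}\bigl(1-\|\nabla h\|^2\bigr).$$
At a maximum point $p$ we have $\nabla h(p)=0$, so $\Delta h(p)=\dot{\varphi}(h(p))>0$. But at an interior maximum the Hessian of $h$ is negative semidefinite, so $\Delta h(p)\le 0$, which is incompatible.

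A cleaner alternative that avoids pointwise reasoning is to integrate. By the divergence theorem on the closed surface, $\int_\Sigma\Delta h\,d\mu=0$. The integrand $\dot{\varphi}\,\nu^2$ is nonnegative, so $\nu\equiv0$ on $\Sigma$, hence $\|\nabla h\|\equiv1$. This contradicts the vanishing of $\nabla h$ at the maximum of $h$.

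The main obstacle is only the justification that $\nabla h=0$ at an interior maximum, so that $\Delta h>0$ holds there strictly. Either route above handles it, so I expect no genuine difficulty.
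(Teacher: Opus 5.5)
Your proposal is correct, but your main argument differs from the paper's. The paper uses only integration: the divergence theorem gives $\int_\Sigma \Delta h\,d\mu=0$, the identity $\Delta h=\dot{\varphi}\,\nu^2\geq 0$ forces $\nu\equiv 0$, and the paper concludes that $\Sigma$ would be vertical (``contained in a vertical plane''), which is incompatible with compactness. That is essentially your second route.

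Your primary route is a pointwise maximum principle. You evaluate $\Delta h=\dot{\varphi}(1-\|\nabla h\|^2)$ only at a maximum point $p$ of $h$. There $\nabla h(p)=0$ gives $\Delta h(p)=\dot{\varphi}(h(p))>0$, which contradicts $\Delta h(p)\le 0$. This is in fact the right way to justify the preceding proposition. The paper's proof of that proposition asserts $\Delta h>0$ everywhere, which fails wherever $\nu=0$. For example, vertical planes over geodesics are $\varphi$-minimal with $\Delta h\equiv 0$. Strict positivity is only needed, and only guaranteed, at critical points of $h$, exactly as you point out.

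Your route is local and gives a stronger conclusion: the height of any compact $\varphi$-minimal surface with boundary attains its maximum only on $\partial\Sigma$, and the closed case is the special case $\partial\Sigma=\emptyset$. The integral route instead yields a global identity, but it needs an extra step to turn $\nu\equiv 0$ into a contradiction. Your ending of that route ($\|\nabla h\|\equiv 1$ versus $\nabla h(p)=0$) avoids the paper's appeal to the structure of vertical surfaces. However, it reuses the maximum point, so once you have that point the integration is superfluous.
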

\begin{proof}
    Suppose by contradiction that there exists a $\varphi-$minimal surface $\Sigma$ in $\mathbb{H}^2\times\mathbb{R}$ compact without an empty boundary. Let $h : \Sigma \to \mathbb{R}$ be its height function. By the divergence theorem, we have
     \begin{eqnarray}
    \int_\Sigma \Delta h = \int_{\partial \Sigma } \langle \nabla h, \eta \rangle  = 0.
  \end{eqnarray}
  On the other hand,
  $$\Delta h = \dot{\varphi} \left(1-||\nabla h ||^2\right) = \dot{\varphi} \nu^2.$$
  Then, $\nu = \langle \eta, \partial_z \rangle = 0$ and $\Sigma$ is contained in a vertical plane, contradicting $\Sigma$ being closed.
\end{proof}

\subsubsection*{Acknowledgments and Funding} Katianny Rosa Freitas was supported by CAPES Finance Code 001, and she thanks the Departamento de Geometría y Topología at Universidad de Granada for the hospitality, where part of this work was conducted. João Paulo dos Santos was partially supported by CNPq 313200/2025-4 and FAPDF 0019300001678/2024-39. Antonio Martínez was partially supported by Ministerio de Ciencia,  Innovaci\'on y Universidades
Grant No: PID2024-160586NB-100; “Maria de Maeztu” Excellence Unit IMAG, reference CEX2020-
001105-M and Junta de Andaluc\'ia Grant No. FQM325.

\newpage
\bibliographystyle{plain}
\bibliography{sample}

@book{anderson2005hyperbolic,
  title={Hyperbolic Geometry},
  author={Anderson, James W.},
  series={Springer Undergraduate Mathematics Series},
  edition={2nd},
  publisher={Springer},
  address={London},
  year={2005},
  doi={10.1007/978-1-84800-000-0},
  isbn={978-1-85233-934-0}
}

@article{arezzo2013conformal,
  title={Conformal solitons to the mean curvature flow and minimal submanifolds},
  author={Arezzo, Claudio and Sun, J},
  journal={Mathematische Nachrichten},
  volume={286},
  number={8-9},
  pages={772--790},
  year={2013},
  publisher={Wiley Online Library}
}

@article{bueno2018translating,
  title={Translating solitons of the mean curvature flow in the space $\mathbb{H}^2\times\mathbb{R}$},
  author={Bueno, Antonio},
  journal={Journal of Geometry},
  volume={109},
  pages={1--26},
  year={2018},
  publisher={Springer}
}

@article{bueno2020uniqueness,
  title={Uniqueness of the translating bowl in $\mathbb{H}^2 \times \mathbb{R}$},
  author={Bueno, Antonio},
  journal={Journal of Geometry},
  volume={111},
  number={3},
  pages={1--7},
  year={2020},
  publisher={Springer}
}

@article{rosales2,
  AUTHOR={Castro, Katherine and Rosales, C\'{e}sar},
  TITLE={Uniqueness results and enclosure properties for hypersurfaces
         with boundary in weighted cylinders},
  JOURNAL={J. Math. Anal. Appl.},
  FJOURNAL={Journal of Mathematical Analysis and Applications},
  VOLUME={512},
  YEAR={2022},
  NUMBER={1},
  PAGES={Paper No. 126143, 19},
  ISSN={0022-247X,1096-0813},
  MRCLASS={53C40 (53C42)},
  MRNUMBER={4392290},
  MRREVIEWER={P.\ G.\ Walczak},
  DOI={10.1016/j.jmaa.2022.126143},
  URL={https://doi.org/10.1016/j.jmaa.2022.126143}
}

@article{clutterbuck2007stability,
  title={Stability of translating solutions to mean curvature flow},
  author={Clutterbuck, Julie and Schn{\"u}rer, Oliver C and Schulze, Felix},
  journal={Calculus of Variations and Partial Differential Equations},
  volume={29},
  pages={281--293},
  year={2007},
  publisher={Springer}
}

@article{de2020hypersurfaces,
  AUTHOR={de Lima, R. F. and Manfio, F. and dos Santos, J. P.},
  TITLE={Hypersurfaces of constant higher-order mean curvature in
         {$M\times\mathbb{R} $}},
  JOURNAL={Ann. Mat. Pura Appl. (4)},
  FJOURNAL={Annali di Matematica Pura ed Applicata. Series IV},
  VOLUME={201},
  YEAR={2022},
  NUMBER={6},
  PAGES={2979--3028},
  ISSN={0373-3114,1618-1891},
  MRCLASS={53B25 (53C24 53C42)},
  MRNUMBER={4498418},
  DOI={10.1007/s10231-022-01229-3},
  URL={https://doi.org/10.1007/s10231-022-01229-3}
}

@article{lima-pipoli,
  AUTHOR={de Lima, Ronaldo F. and Pipoli, Giuseppe},
  TITLE={Translators to higher order mean curvature flows in {$\mathbb{R}^n\times\mathbb{R}$} and {$\mathbb{H}^n\times\mathbb{R}$}},
  JOURNAL={J. Geom. Anal.},
  FJOURNAL={Journal of Geometric Analysis},
  VOLUME={35},
  YEAR={2025},
  NUMBER={3},
  PAGES={Paper No. 92, 50},
  ISSN={1050-6926,1559-002X},
  MRCLASS={53E40 (53C42 53C45)},
  MRNUMBER={4863897},
  MRREVIEWER={Peter\ Olamide\ Olanipekun},
  DOI={10.1007/s12220-025-01925-5},
  URL={https://doi.org/10.1007/s12220-025-01925-5}
}

@article{de2019translating,
  title={Translating solitons in Riemannian products},
  author={de Lira, Jorge and Martín, Francisco},
  journal={Journal of Differential Equations},
  volume={266},
  number={12},
  pages={7780--7812},
  year={2019},
  publisher={Elsevier}
}

@article{dierkes2021symmetric,
  title={Symmetric solutions of the singular minimal surface equation},
  author={Dierkes, Ulrich and Groh, Nico},
  journal={Annals of Global Analysis and Geometry},
  volume={60},
  number={2},
  pages={431--453},
  year={2021},
  publisher={Springer}
}

@article{joeri,
  AUTHOR={Dillen, Franki and Fastenakels, Johan and Van der Veken,
          Joeri},
  TITLE={Rotation hypersurfaces in {$\mathbb{S}^n\times\mathbb{R}$} and {$\mathbb{H}^n\times\mathbb{R}$}},
  JOURNAL={Note Mat.},
  FJOURNAL={Note di Matematica},
  VOLUME={29},
  YEAR={2009},
  NUMBER={1},
  PAGES={41--54},
  ISSN={1123-2536,1590-0932},
  ISBN={978-88-548-3482-8},
  MRCLASS={53C40 (53C50)},
  MRNUMBER={2779904},
  MRREVIEWER={Ivko\ Dimitric}
}

@article{eschenburg1989maximum,
  title={Maximum principle for hypersurfaces},
  author={Eschenburg, J-H},
  journal={manuscripta mathematica},
  volume={64},
  number={1},
  pages={55--75},
  year={1989},
  publisher={Springer}
}

@misc{tarcios-joao,
  author={Ferreira, T. A. and dos Santos, J. P.},
  title={Classification of minimal surfaces and solitons to the mean curvature flow in $\mathbb{H}^3$ as translation surfaces},
  year={2025},
  note={arXiv:2511.21545v1},
  url={https://arxiv.org/abs/2511.21545v3}
}

@misc{hyperbolic-FMS,
  author={Freitas, K. F. and Mart\'inez, A. and dos Santos, J. P.},
  title={Weighted minimal surfaces in $\mathbb{H}^2 \times \mathbb{R}$ invariant under hyperbolic translations},
  year={2026},
  note={In preparation}
}

@inproceedings{hoffman2017notes,
  title={Notes on translating solitons for mean curvature flow},
  author={Hoffman, David and Ilmanen, Tom and Mart{\'i}n, Francisco and White, Brian},
  booktitle={m: iv Workshops},
  pages={147--168},
  year={2017},
  organization={Springer}
}

@article{rosales1,
  AUTHOR={Hurtado, A. and Palmer, V. and Rosales, C.},
  TITLE={Parabolicity criteria and characterization results for
         submanifolds of bounded mean curvature in model manifolds with
         weights},
  JOURNAL={Nonlinear Anal.},
  FJOURNAL={Nonlinear Analysis. Theory, Methods \& Applications. An
            International Multidisciplinary Journal},
  VOLUME={192},
  YEAR={2020},
  PAGES={111681, 32},
  ISSN={0362-546X,1873-5215},
  MRCLASS={53C42 (31C12 35R01)},
  MRNUMBER={4031157},
  MRREVIEWER={S.\ Carolina\ Garc\'{\i}a-Mart\'{\i}nez},
  DOI={10.1016/j.na.2019.111681},
  URL={https://doi.org/10.1016/j.na.2019.111681}
}

@article{lopez2018invariant,
  title={Invariant singular minimal surfaces},
  author={L{\'o}pez, Rafael},
  journal={Annals of Global Analysis and Geometry},
  volume={53},
  number={4},
  pages={521--541},
  year={2018},
  publisher={Springer}
}

@article{lopez2019dirichlet,
  title={The Dirichlet problem for the $\alpha$-singular minimal surface equation},
  author={L{\'o}pez, Rafael},
  journal={Archiv der Mathematik},
  volume={112},
  number={2},
  pages={213--222},
  year={2019},
  publisher={Springer}
}

@article{ManfioTojeiro2011,
  author={Manfio, Fernando and Tojeiro, Ruy},
  title={Hypersurfaces with constant sectional curvature of
         \(\mathbb{S}^n \times \mathbb{R}\) and \(\mathbb{H}^n \times \mathbb{R}\)},
  journal={Illinois Journal of Mathematics},
  volume={55},
  number={1},
  pages={397--415},
  year={2011},
  mrnumber={MR2856874},
  zblnumber={1245.53021},
  url={https://projecteuclid.org/euclid.ijm/1359766951}
}

@article{martin2015topology,
  title={On the topology of translating solitons of the mean curvature flow},
  author={Mart{\'i}n, Francisco and Savas-Halilaj, Andreas and Smoczyk, Knut},
  journal={Calculus of Variations and Partial Differential Equations},
  volume={54},
  number={3},
  pages={2853--2882},
  year={2015},
  publisher={Springer}
}

@article{weierstrass-martinez-trivino,
  AUTHOR={Mart\'{\i}nez, Antonio and Mart\'{\i}nez-Trivi\~{n}o, A. L.},
  TITLE={A {W}eierstrass type representation for translating solitons
         and singular minimal surfaces},
  JOURNAL={J. Math. Anal. Appl.},
  FJOURNAL={Journal of Mathematical Analysis and Applications},
  VOLUME={516},
  YEAR={2022},
  NUMBER={2},
  PAGES={Paper No. 126528, 11},
  ISSN={0022-247X,1096-0813},
  MRCLASS={53A10 (35C08)},
  MRNUMBER={4457891},
  MRREVIEWER={Pradip\ Kumar},
  DOI={10.1016/j.jmaa.2022.126528},
  URL={https://doi.org/10.1016/j.jmaa.2022.126528}
}

@article{mean-convex,
  AUTHOR={Mart\'{\i}nez, Antonio and Mart\'{\i}nez-Trivi\~{n}o, Antonio
          Luis and dos Santos, Jo\~{a}o Paulo},
  TITLE={Mean convex properly embedded
         {$[\varphi, \vec{e}_3]$}-minimal surfaces in
         {$\mathbb{R}^3$}},
  JOURNAL={Rev. Mat. Iberoam.},
  FJOURNAL={Revista Matem\'{a}tica Iberoamericana},
  VOLUME={38},
  YEAR={2022},
  NUMBER={4},
  PAGES={1349--1370},
  ISSN={0213-2230,2235-0616},
  MRCLASS={35Q53 (53C42)},
  MRNUMBER={4445917},
  DOI={10.4171/rmi/1352},
  URL={https://doi.org/10.4171/rmi/1352}
}

@article{martinez2022equilibrium,
  title={Equilibrium of surfaces in a vertical force field},
  author={Mart{\'\i}nez, Antonio and Mart{\'\i}nez-Trivi{\~n}o, AL},
  journal={Mediterranean Journal of Mathematics},
  volume={19},
  number={1},
  pages={1--28},
  year={2022},
  publisher={Springer}
}

@article{catenary-cylinders,
  AUTHOR={Mart\'{\i}nez-Trivi\~{n}o, A. L. and dos Santos, J. P.},
  TITLE={Uniqueness of the {$[\varphi, \overrightarrow e_3]$}-catenary
         cylinders by their asymptotic behaviour},
  JOURNAL={J. Math. Anal. Appl.},
  FJOURNAL={Journal of Mathematical Analysis and Applications},
  VOLUME={514},
  YEAR={2022},
  NUMBER={2},
  PAGES={Paper No. 126347, 17},
  ISSN={0022-247X,1096-0813},
  MRCLASS={53A10 (35B33)},
  MRNUMBER={4429585},
  MRREVIEWER={Ergin\ Bayram},
  DOI={10.1016/j.jmaa.2022.126347},
  URL={https://doi.org/10.1016/j.jmaa.2022.126347}
}

@misc{half-space,
  author={Mart\'{\i}nez-Trivi\~{n}o, A. L. and dos Santos, J. P. and Tinaglia, T.},
  title={Half-space type theorems for a class of weighted minimal surfaces in $\mathbb{R}^3$},
  year={2026},
  note={arXiv:2601.22092},
  url={https://arxiv.org/abs/2601.22092}
}

@book{perko2013differential,
  title={Differential equations and dynamical systems},
  author={Perko, Lawrence},
  volume={7},
  year={2013},
  publisher={Springer Science \& Business Media}
}

@misc{tinaglia-pipoli,
  author={Pipoli, G. and dos Santos, J. P. and Tinaglia, T.},
  title={On the geometry of the asymptotic boundary of translators in $\mathbb{H}^2 \times \mathbb{R}$},
  year={2026},
  note={arXiv:2505.21083},
  url={https://arxiv.org/abs/2505.21083}
}

@article{poisson1975surfaces,
  title={Sur les surfaces elastique},
  author={Poisson, SD},
  journal={Men. CL. Sci. Math. Phys. Inst. Frace, deux},
  pages={167--225},
  year={1975}
}

@article{serrin-removesingular,
  title={Removable singularities of solutions of elliptic equations. II},
  author={James Serrin},
  journal={EBSCOhost Academic Search Premier},
  year={1965},
  volume={20},
  number={3},
  pages={163--169},
  doi={10.1007/bf00276442}
}

\end{document}